\DeclarePairedDelimiter\abs{\lvert}{\rvert}
\DeclarePairedDelimiter\norm{\lVert}{\rVert}
\newtheorem{hypx}{}
\numberwithin{equation}{section}
\newcounter{example} 
\renewcommand{\theexample}{\arabic{example}} 
\newenvironment{example}[1][]{%
    \refstepcounter{example} 
    \noindent\textbf{Example \theexample.} 
    \ifx&#1&%
    \else \label{#1}%
    \fi
}{\par}
\newtheorem{theorem}{Theorem}[section]
\newtheorem{definition}{Definition}[section]
\newtheorem{lemma}{Lemma}[section]
\newtheoremstyle{iremark}
  {\topsep}   
  {\topsep}   
  {\upshape}  
  {0pt}       
  {\itshape}  
  {.}         
  {5pt plus 1pt minus 1pt} 
  {\thmname{#1}\thmnumber{ \itshape#2}\thmnote{ (#3)}} 
\theoremstyle{iremark}
\newtheorem{remark}{Remark}
\newenvironment{nalign}{
    \begin{equation}
    \begin{aligned}
}{
    \end{aligned}
    \end{equation}
    \ignorespacesafterend
}
\newcommand{\jinz}{{j \in \mathbb{Z}}}
\newcommand{\spc}{{\hspace{0.5cm}}}
\newcommand{\jmh}{{j-\frac{1}{2}}}
\newcommand{\jph}{{j+\frac{1}{2}}}
\newcommand{\nph}{{n+\frac{1}{2}}}
\newcommand{\jmtbt}{{j-\frac{3}{2}}}
\newcommand{\jpo}{{j+1}}
\newcommand{\half}{\frac{1}{2}}
\newcommand*\dif{\mathop{}\!\mathrm{d}}
\newcommand{\cblue}[1]{\textcolor{black}{#1}}
\newcommand{\no}{\nonumber}
\title{A  MUSCL-Hancock scheme for non-local conservation laws}
\author{
Nikhil Manoj\orcidlink{0009-0003-6812-8633}\\
School of Mathematics\\
Indian Institute of Science Education and Research\\
Trivandrum --695551\\
\texttt{nikhilmanoj2020@iisertvm.ac.in} \\
\And
G. D. Veerappa Gowda\orcidlink{0009-0009-8525-4790} \\
Centre for Applicable Mathematics\\
Tata Institute of Fundamental Research\\
Bangalore -- 560065\\
\texttt{gowda@tifrbng.res.in} \\
\And
Sudarshan~Kumar~K. \orcidlink{0009-0005-1186-4167} \thanks{Corresponding author} \\
School of Mathematics\\
Indian Institute of Science Education and Research\\
Trivandrum --695551\\
\texttt{sudarshan@iisertvm.ac.in} \\
}
\begin{document}
\maketitle

\begin{abstract}
In this article, we propose a MUSCL-Hancock-type second-order scheme  for the discretization of a general class of non-local conservation laws and present its convergence analysis. The main difficulty in designing a MUSCL-Hancock-type scheme for non-local equations lies in the discretization of the convolution term, which we carefully formulate to ensure second-order accuracy and facilitate rigorous convergence analysis. We derive several essential estimates  including $\mathrm{L}^\infty,$ bounded variation $(\mathrm{BV})$ and $\mathrm{L}^1-$ Lipschitz continuity in time,  which together with the Kolmogorov’s compactness theorem yield the convergence of the approximate solutions to a weak solution. Further, by incorporating a mesh-dependent modification in the slope limiter, we establish convergence to the entropy solution. Numerical experiments are provided to validate the theoretical results and to demonstrate the improved accuracy of the proposed scheme over its first-order counterpart.
\end{abstract}

\section{Introduction}
Conservation laws with non-local flux have emerged as powerful tools for modeling a wide range of flow-like physical processes that involve interactions over spatial neighborhoods. In contrast to local conservation laws, where the flux at a point depends solely on the value of the unknown at that point, non-local models incorporate surrounding information, typically through a convolution of the unknown with a given kernel function. The non-local term in the flux provides a more realistic description of physical scenarios by capturing phenomena such as the impact of surrounding traffic density on driver behavior in traffic flow \cite{bayen2021, blandin2016, chiarello-globalentropy, chiarellotosin2023,ciotirfayad2021, SopasakisKatsoulakis2006},  the influence of nearby pedestrians in crowd dynamics \cite{burgergoatin2020,colomboGaravelloMercier2012, ColomboHertyMercier2011, ColomboMercier2012}, or the effect of neighboring particle concentrations in sedimentation \cite{betancourt2011}.
These models have been extensively studied from a theoretical standpoint \cite{keimer2017, Keimer2018, keimer2019}, and several numerical schemes have been proposed for their approximation. In particular, a  variety of first-order finite volume schemes have been developed, see \cite{betancourt2011, amorim2015, aggarwal2015, chiarello2018, friedrich2018, friedrich2023}. While first-order methods are robust and aid in establishing well-posedness, they tend to be overly diffusive, making them inadequate for applications requiring high accuracy. This has led to a growing interest in the design of second- and higher-order schemes that can offer improved resolution. For non-local problems, however, the development of high-order numerical methods is still in its early stages; see~\cite{gowda2023, friedrich2019CWENO, chalons2018, manoj2024} for some recent advances.

In this work, we present a provably convergent, single-stage, second-order MUSCL–Hancock (MH)-type scheme for a broad class of non-local conservation laws. The MH method, originally proposed in \cite{vanleer1984}, combines MUSCL-type spatial reconstruction \cite{van1979towards} with a predictor–corrector time integration based on a midpoint rule. Starting from the reconstructed values at time $t^n$, the predictor step uses the Taylor expansion to compute the interface values at $(t^\nph)$. These are then used in the corrector step to evaluate the numerical fluxes. Compared to the conventional two-stage Runge–Kutta-based MUSCL schemes, the MH approach is computationally more efficient, as it requires only one spatial reconstruction and one numerical flux evaluation per time step.  The method has seen widespread use due to its simplicity and efficiency; see \cite{ berthon2006,waagan2009, guinot2012, chandrasekhar2020, tong2023, tong2024} for recent advances and applications of this scheme in various contexts.

The main difficulty in designing a MUSCL–Hancock-type scheme for non-local conservation laws lies in the accurate discretization of the non-local term in the flux, which involves the convolution of the unknown with a given kernel function. Achieving second-order accuracy while enabling a rigorous convergence analysis necessitates a careful treatment of this term in both the predictor and corrector steps of the scheme. In \cite{gowda2023}, a MUSCL- Hancock-type scheme was proposed for a class of non-local traffic flow models with piecewise smooth and non-increasing convolution kernels. Although the method was found to be effective in computations, a rigorous convergence proof for the specific  MH scheme proposed in \cite{gowda2023} remains out of reach. In this work, we formulate  novel discretizations of the convolution term  and  propose a new MH-type scheme for the problem originally studied in \cite{amorim2015, aggarwal2015}. Specifically, in the predictor step, we compute the interface convolution values using a piecewise linear reconstruction of the convolution term, where the slope in each cell is suitably determined. In the corrector step, we discretize the convolutions at the intermediate time level  $t^\nph$  using an appropriate quadrature rule.

As a major contribution of this work, we establish the convergence of the proposed scheme to the unique entropy solution of the problem under consideration. To this end, we first reformulate the scheme in a suitable form  and prove the positivity-preserving property and $\mathrm{L}^\infty$ stability under an appropriate CFL condition. We  then derive a total variation bound and a time-continuity estimate. These analyses present several difficulties, which we overcome using the properties of the slope-limiter and the novel discrete convolutions in the predictor step. Leveraging  the derived estimates, Kolmogorov’s compactness theorem provides us the existence of a convergent subsequence of approximate solutions. To ensure that the entire scheme converges to the entropy solution of the underlying problem, we introduce a mesh-dependent modification to the slope limiter, following the strategy in \cite{vila1988, vila1989, gowda2023}. The resulting scheme is then shown to converge to the unique entropy solution. We also provide numerical examples to support the theoretical results and to compare the proposed method with a first-order scheme and a conventional second-order  MUSCL-type scheme based on Runge-Kutta time integration. 


The remainder of the paper is structured as follows. In Section~\ref{section: preliminaries}, we present the necessary preliminaries for the class of non-local conservation laws considered in this work. Section~\ref{section:mhscheme} describes the proposed numerical scheme in detail. Uniform a priori estimates for the scheme are established in Section~\ref{section:estimates}. In Section~\ref{section:entropy}, we prove the convergence of the scheme to the unique entropy solution. Numerical experiments are presented in Section~\ref{section:numerics}.

\section{Non-local conservation laws}\label{section: preliminaries}

We are interested in the initial value problem for one dimensional  non-local conservation laws considered in \cite{aggarwal2015, amorim2015}: 
\begin{nalign}\label{eq:problem}
    \partial_{t}\rho + \partial_{x}f(\rho, A(t,x)) &= 0, \quad \quad \quad (t,x) \in  (0,\infty) \times \mathbb{R},\\
    \rho(0, x) &= \rho_{0}(x), \quad \,\,\, x \in \mathbb{R}, 
\end{nalign}
where $\mu:\mathbb{R}\rightarrow \mathbb{R}, f:\mathbb{R}\times\mathbb{R}\rightarrow \mathbb{R}$ and $\rho:(0,\infty)\times\mathbb{R}\rightarrow \mathbb{R}$ denote the convolution kernel, the given flux function and the unknown quantity, respectively. Here, the convolution term $\mu*\rho$ is defined as \begin{align}\label{eq:conv_defn}
A(t,x) := \mu*\rho(t,x) = \int_{-\infty}^\infty \mu(x- y) \rho(t, y) \dif y.
\end{align}

\subsection{Notations}
We use the following notations throughout the paper:
$\mathbb{R}_+:=[0,\infty),$ denotes the set of non-negative real numbers. For $a,b \in \mathbb{R},$ denote $\mathcal{I}(a,b):= (\min(a,b), \max(a,b))$ and $\norm{\cdot} :=\norm{\cdot}_{\mathrm{L}^{\infty} }.$ Finally, for a function $g:\mathbb{R} \rightarrow \mathbb{R}$ we denote by $\mathrm{TV}(g)$ the total variation seminorm of $g.$

\subsection{Hypotheses}
The functions $f$ and $ \mu$ in \eqref{eq:problem} and \eqref{eq:conv_defn} are assumed to satisfy the following hypothesis:
\begin{hypx}\label{hyp:H1}
For all \( A \in \mathbb{R} \),
\begin{enumerate}[label=(\roman*), nosep, leftmargin=2cm]
    \item \( f \in \mathrm{C}^{2}(\mathbb{R} \times \mathbb{R}; \mathbb{R}),\quad
        \partial_{\rho}f \in \mathrm{L}^{\infty}( \mathbb{R} \times \mathbb{R}; \mathbb{R}) \),
    \item \( f(0, A) = 0. \)
\end{enumerate}
\end{hypx}

\begin{hypx}\label{hyp:H2}
There exists an \( M > 0 \) such that for all \( \rho, A \) in the respective domains,
\[
    \lvert \partial_{A}f  \rvert, \ \lvert \partial^{2}_{AA}f \rvert \leq M \lvert \rho \rvert.
\]
\end{hypx}

\begin{hypx}\label{hyp:H3}
\( \partial_{\rho}f \in \mathrm{W}^{1, \infty}(\mathbb{R} \times \mathbb{R}; \mathbb{R}) \).
\end{hypx}

\begin{hypx}\label{hyp:H4}
\( \mu \in (\mathrm{C}_{c}^{2} \cap \mathrm{W}^{2, \infty})(\mathbb{R}; \mathbb{R}). \)
\end{hypx}

Solutions to non-linear conservation laws need not be smooth in general, even in the case when the initial datum is smooth. Therefore, we consider the weak/entropy formulations of the solutions to \eqref{eq:problem} defined below.
\subsection{Weak and entropy solutions}
\begin{definition}\label{defn:weaksoln}(Weak solution)
A function $\rho \in ({\rm L}^{\infty} \cap {\rm L}^{1})$$ ([0,T) \times \mathbb{R}; \mathbb{R})$, $T >0$, is a weak solution of (\ref{eq:problem}) if
    \begin{align}\label{eq:weaksoln}
        \int_{0}^{T}\int_{-\infty}^{+\infty}\bigl(\rho\partial_{t}\varphi + f(\rho, A(t,x)) \partial_{x}\varphi\bigr)(t,x) \dif x \dif t + \int_{-\infty}^{+\infty}\rho_{0}(x)\varphi(0, x) \dif x = 0
    \end{align} 
$\mbox{for all} \ \varphi \in {\rm C}_{c}^{1}([0, T) \times \mathbb{R}; \mathbb{R}).$    
\end{definition}
Next, following the definition in \cite{betancourt2011}, we  define an entropy solution to the problem \eqref{eq:problem} as follows:
\begin{definition}\label{defn: entsoln}(Entropy solution)
    A function $\rho \in ({\rm L}^{\infty} \cap {\rm L}^{1})$$ ([0,T) \times \mathbb{R}; \mathbb{R})$, $T >0$, is an entropy weak solution of (\ref{eq:problem}) if
    \begin{nalign}\label{eq:entropy_ineq}
        &\int_{0}^{T}\int_{-\infty}^{+\infty}\Big(\lvert\rho-\kappa \rvert \partial_{t}\varphi + \mathrm{sgn}(\rho- \kappa)\bigl(f(\rho, A(t,x)) - f(\kappa, A(t,x))\bigr)\partial_{x}\varphi \\& - \mathrm{sgn}(\rho -\kappa)\partial_{A}f(\kappa, A(t,x))\partial_{x}(A(t,x)) \varphi \Big)(t, x) \dif x \dif t + \int_{-\infty}^{+\infty}\lvert \rho_{0}(x) - \kappa\rvert \varphi(0, x) \dif x \geq 0
    \end{nalign}
    $\mbox{for all} \ \varphi \in {\rm C}_{c}^{1}([0, T) \times \mathbb{R}; \mathbb{R}^{+}) \ \mbox{and} \  \kappa \in \mathbb{R}, \ \mbox{where $\mathrm{sgn}$ is the sign function.}$
\end{definition}
We note that the weak solution to \eqref{eq:problem} that satisfies the entropy inequality \eqref{eq:entropy_ineq} is unique, as observed in \cite{betancourt2011, blandin2016}.
\section{A MUSCL-Hancock-type second-order scheme}\label{section:mhscheme}
We discretize the spatial domain into Cartesian grids with a uniform mesh of size $\Delta x$. The spatial domain is now a union of cells of the form $[x_{j-\frac{1}{2}}, x_{j+\frac{1}{2}}]$ where $x_{j+\frac{1}{2}}- x_{j-\frac{1}{2}}= \Delta x$ and $x_j = j\Delta x.$ We fix $T>0$ and the time-step is denoted by $\Delta t$ and $t^{n}= n\Delta t \ \mbox{for} \ n \in \mathbb{N}$, $\lambda= \frac{\Delta t}{\Delta x}.$ The initial datum $\rho_{0}$ is discretized as
\begin{align*}
    \rho_{j}^{0}= \frac{1}{\Delta x } \int_{x_{j-\frac{1}{2}}}^{x_{j+\frac{1}{2}}}\rho_{0}(x) \dif x  \quad \mbox{for} \ j \in \mathbb{Z}.
\end{align*}
Given the cell-average solutions $\{\rho_{j}^n\}_{\jinz}$ at the $n-$th time-level, the first step is to obtain a piecewise linear reconstruction as follows
\begin{align}\label{eq:reconstruction_pl}
    \tilde{\rho}_{\Delta}^n(x) &:= \rho_{j}^n + \frac{(x-x_{j})}{\Delta x}\sigma_{j}^n \quad \mbox{for} \, x \in (x_{\jmh}, x_{\jph}),
\end{align}
where the slopes $\sigma_j^n$ are computed as 
\begin{align}
    \sigma^{n}_{j}& = 2 \theta \textrm{minmod}\left((\rho^{n}_{j}-\rho^{n}_{j-1}), \  \frac{1}{2}(\rho^{n}_{j+1}-\rho^{n}_{j-1}), \  (\rho^{n}_{j+1}-\rho^{n}_{j})\right), \label{slope-1}
  \end{align}
with $\theta \in {[0,0.5]}.$ The left and right face values (at the interface $x=x_{\jph}$) of the reconstructed polynomial are given by
\begin{align}\label{eq:reconvalues}
\rho_{j+\frac{1}{2}}^{n,-}= \rho^{n}_{j}+\frac{\sigma_{j}^{n}}{2}, \ \ \ \rho_{j+\frac{1}{2}}^{n,+}= \rho^{n}_{j+1}-\frac{\sigma_{j+1}^{n}}{2}. 
\end{align} Next, a finite volume integration of the conservation law \eqref{eq:problem} in the domain $[t^n, t^{n+1}] \times [x_{\jmh}, x_{\jph}]$ yields
\begin{nalign}\label{eq:finitevolume}
    \int_{x_{\jmh}}^{x_{\jph}}\rho(t^{n+1},x) \dif x &=\int_{x_{\jmh}}^{x_{\jph}}\rho(t^{n},x) \dif x \\ & \spc -\left(\int_{t^n}^{t^{n+1}}f(\rho(t,x_{\jph}),A(t, x_{\jph})) \dif t -\int_{t^n}^{t^{n+1}}f(\rho(t,x_{\jmh}),A(t, x_{\jmh}))\dif t \right)\\
    & \approx  \int_{x_{\jmh}}^{x_{\jph}}\rho(t^{n},x) \dif x \\ & \spc -\Delta t \left(f(\rho(t^\nph,x_{\jph}),A(t^{\nph}, x_{\jph}))  -f(\rho(t^\nph,x_{\jmh}),A(t^{\nph}, x_{\jmh}))\right),
\end{nalign}
where we have used midpoint quadrature rule in approximating the flux integral.
From \eqref{eq:finitevolume}, we formulate a MUSCL-Hancock type finite volume scheme as follows  \begin{nalign}\label{eq:mh}
    \rho^{n+1}_{j} &= \rho^{n}_{j} - \lambda\left[F^{n+\frac{1}{2}}_{j+\frac{1}{2}} - F^{n+\frac{1}{2}}_{j-\frac{1}{2}}\right],
\end{nalign} 
where for an  appropriately chosen numerical flux $F,$ we define $\displaystyle F_{\jph}^\nph := F(\rho_{j+\frac{1}{2}}^{n+\frac{1}{2},-}, \rho_{j+\frac{1}{2}}^{n+\frac{1}{2}, +}, A^{\nph}_{\jph}), \; j \in \mathbb{Z},$ and the mid-time density values are obtained using Taylor series expansions as follows
\begin{nalign}\label{eq:midtimevalues}
\rho_{j+\frac{1}{2}}^{n+\frac{1}{2},-}&=\rho_{j+\frac{1}{2}}^{n,-}-\frac{\lambda}{2} \left (f(\rho_{j+\frac{1}{2}}^{n,-},A^{n,-}_{j+\frac{1}{2}})-f(\rho_{j-\frac{1}{2}}^{n,+},A^{n,+}_{j-\frac{1}{2}}) \right), \\
\rho_{j-\frac{1}{2}}^{n+\frac{1}{2},+}&=\rho_{j-\frac{1}{2}}^{n,+}-\frac{\lambda}{2} \left (f(\rho_{j+\frac{1}{2}}^{n,-},A^{n,-}_{j+\frac{1}{2}})-f(\rho_{j-\frac{1}{2}}^{n,+},A^{n,+}_{j-\frac{1}{2}}) \right), \;\;\;j\; \in \mathbb{Z}.
\end{nalign}
The terms $A^{\nph}_{j \pm \half}$ in \eqref{eq:mh} and  $A^{n,\pm}_{j\mp\frac{1}{2}}$ in \eqref{eq:midtimevalues} are suitable approximations of the convolution terms     $A(t^{\nph},{x_{j \pm \half}})$ and $A(t^n, x_{j\mp \half}^{n, \pm})$, respectively. These approximations are elaborated in the following section.

\subsection{Approximation of convolution terms}
To begin with, using Taylor series expansions we write 
\begin{nalign}\label{eq:conv_lr}
    A (t^n, x_{\jmh}) &= A (t^n, x_{j}) - \frac{\Delta x}{2} \partial_{x}A(t^n, x_{j}) + \mathcal{O}(\Delta x^2),\\ 
    A (t^n, x_{\jph}) &= A (t^n, x_{j}) + \frac{\Delta x}{2} \partial_{x}A(t^n, x_{j}) + \mathcal{O}(\Delta x^2).
\end{nalign}
Next, we approximate the quantities on the right hand side of \eqref{eq:conv_lr} using suitable quadrature rules. To this end, using midpoint quadrature rule, we first write
\begin{nalign}\label{eq:conv_appr}
    A(t^n,x_{j})&= \int_{\mathbb{R}} \mu(x_{j}- y) \rho(t^n,y) \dif y  \\
& = \sum_{l \in \mathbb{Z}} \int_{x_{l-\frac{1}{2}}}^{x_{l+\frac{1}{2}}} \mu(x_{j}- y) \rho(t^n, y) \dif y  \approx
\Delta x \sum_{l \in \mathbb{Z}}\mu_{j-l}        \ \rho^{n}_{l} =: A^{n}_{j},
\end{nalign}
where $\mu_{j}:= \mu(j\Delta x), \, \jinz.$
Further, using the central difference approximation to the derivative, we write
\begin{nalign}\label{eq:der_approx}
   \Delta x \partial_{x}A(t^n, x_{j}) &= \frac{1}{2}\left(A (t^n, x_{j+1})- A (t^n, x_{j-1})\right) + \mathcal{O}(\Delta x^3)\\
   & \approx \frac{1}{2}(A_{j+1}^n-A_{j-1}^n) + \mathcal{O}(\Delta x^3).
\end{nalign}
Plugging in the approximations \eqref{eq:conv_appr} and \eqref{eq:der_approx} to \eqref{eq:conv_lr}, we obtain
\begin{nalign}\label{eq:lrapprox}
  A (t^n, x_{\jmh}) 
  &\approx A_{j}^{n} - \frac{1}{4}(A_{j+1}^n-A_{j-1}^n) =: \hat{A}_{\jmh}^{n,+},\\
  A (t^n, x_{\jph}) 
  &\approx A_{j}^{n}+ \frac{1}{4}(A_{j+1}^n-A_{j-1}^n) =: \hat{A}_{\jph}^{n,-}.
\end{nalign}
Motivated by \eqref{eq:lrapprox} and using the parameter $\theta$ from \eqref{slope-1}, we redefine the left and right approximate convolutions in the cell $[x_{\jmh}, x_{\jph}]$ as follows
\begin{nalign}\label{eq:lrconv}
     A_{\jph}^{n,-}&:= A_{j}^{n}+ \frac{s_{j}^n}{2} , \quad 
A_{\jmh}^{n,+} := A_{j}^{n}- \frac{s_{j}^n}{2}, \end{nalign}
$\displaystyle \mbox{where} \, s_{j}^n := 2\theta\frac{(A_{j+1}^n-A_{j-1}^n)}{2} \,\, \mbox{for} \,\, \theta \in  [0,0.5].$

\begin{remark}\label{remark:lrconv_}
    If we choose $\theta = 0.5,$ the approximations \eqref{eq:lrconv} and \eqref{eq:lrapprox} coincide -- i.e., $A_{\jph}^{n,-}= \hat{A}_{\jph}^{n,-} , \, \mbox{and} \, 
A_{\jmh}^{n,+}= \hat{A}_{\jmh}^{n,+}.$ On the other hand, if we choose $\theta =0,$ then $s_j^n=0$ for all $j \in \mathbb{Z}$ and consequently $A_{\jmh}^{n,+}= A_{\jph}^{n,-}= A_{j}^n.$
\end{remark}
Once the mid-time density values $\rho^{\nph,\mp}_{j \pm\half}$ are computed using \eqref{eq:midtimevalues}, the mid-time convolution approximation $A^{(n+\frac{1}{2})}_{j+\frac{1}{2}} \approx A(t^{\nph}, x_{\jph})$ in \eqref{eq:mh} is approximated using the trapezoidal quadrature rule as follows
\begin{nalign}\label{eq:midtimeconv}
     A(t^\nph, x_{\jph}) &=\sum_{l \in \mathbb{Z}} \int_{x_{l-\frac{1}{2}}}^{x_{l+\frac{1}{2}}} \mu(x_{\jph}- y) \rho(t^{n+\frac{1}{2}}, y) \dif y\\
     &\approx \frac{\Delta x}{2}  \sum_{l \in \mathbb{Z}}\left[\mu_{j+1-l}        \ \rho^{n+\frac{1}{2},+}_{l-\frac{1}{2}}+ \mu_{j-l} \ \rho^{n+\frac{1}{2},-}_{l+\frac{1}{2}}\right] =: A^{n+\frac{1}{2}}_{j+\frac{1}{2}}.
\end{nalign}

\subsection{Numerical flux}
Throughout this article, we consider the scheme \eqref{eq:mh} with a Lax-Friedrichs type numerical flux, which we define following the formulation in \cite{aggarwal2015, amorim2015, aggarwal_holden_vaidya2024} as described below
\begin{nalign}\label{eq:midtimenumflux}
F(u, v, A) = \frac{ f(u, A) + f(v, A)}{2} - \frac{\alpha(v-u)}{2 \lambda}, \quad \mbox{for} \,\, u, v \in \mathbb{R},
\end{nalign}
and for a fixed coefficient $\alpha \in (0, \frac{8}{27})$ .


\par
Finally, for a fixed mesh size $\Delta x,$ we denote by $\rho_{\Delta}(t, x)$  the piecewise constant approximate solution  obtained from the scheme \eqref{eq:mh}:
\begin{align}\label{eq:approxsoln}
    \rho_{\Delta}(t, x) := \rho_{j}^{n} \quad \mbox{for} \quad (t,x) \in [t^{n}, t^{n+1}) \times [x_{j-\frac{1}{2}}, x_{j+\frac{1}{2}}) \quad \mbox{for} \ n \in \mathbb{N} \ \mbox{and} \ j \in \mathbb{Z}.
\end{align}

\begin{remark}
    Setting $\theta = 0$ in \eqref{slope-1} and \eqref{eq:lrconv} leads to $\sigma_{j}^n = s_{j}^n = 0,$ which implies that $\rho^{n,-}_{\jph} =\rho^{n,+}_{\jmh}=\rho_{j}^{n}$ for all $j \in \mathbb{Z}.$ Using this, along with  Remark \ref{remark:lrconv_}, we further obtain $\rho^{n+\frac{1}{2},-}_{\jph} =\rho^{n+\frac{1}{2},+}_{\jmh}=\rho_{j}^{n}$ for all $j \in \mathbb{Z}.$ Consequently, the scheme \eqref{eq:mh} with numerical flux \eqref{eq:midtimenumflux} reduces to a first-order Lax-Friedrichs type scheme:
    \begin{nalign}\label{eq:lxf}
    \rho^{n+1}_{j} &= \rho^{n}_{j} - \lambda\left[F(\rho_{j}^{n},\rho_{j+1}^{n}, A^{n}_{\jph}) - F(\rho_{j-1}^{n}, \rho_{j}^{n}, A^{n}_{\jmh})\right], \;\;\;j\; \in \mathbb{Z},
\end{nalign}
where $ \displaystyle A^{n}_{\jph} := \frac{\Delta x}{2}\sum_{\jinz} (\mu_{j+1-l}+\mu_{j-l})\rho_{l}^n \approx A(t^n,x_{\jph}).$
\end{remark}

\section{Estimates on the numerical solutions}\label{section:estimates}
In this section, we establish certain essential estimates on the approximate solutions generated by the scheme \eqref{eq:mh}, namely $\mathrm{L}^\infty,$ $\mathrm{BV}$ and $\mathrm{L}^{1}$ time continuity estimates, which are required for the convergence analysis. Before proceeding into that, we start with some preliminary structural properties of the scheme. In the following remark, we obtain an estimate on the difference between the two consecutive left/right interface density values in \eqref{eq:reconvalues}. 
\begin{remark}\label{remark:ratioslopes}
From \eqref{slope-1}, we have   $\sigma_{j}^{n}\sigma_{j+1}^{n} \geq 0$ for $j \in \mathbb{Z}.$ This in turn implies that $\abs{\sigma_{j+1}^{n}-\sigma_{j}^{n}} \leq \max\{\abs{\sigma_{j+1}^{n}},\abs{\sigma_{j}^{n}}\}.$ As a result, we obtain
\begin{align}\label{eq:ratioslope}
    \abs[\Bigg]{\frac{\sigma_{j+1}^{n}-\sigma_{j}^{n}}{\rho_{j+1}^{n}-\rho_{j}^{n}}} &\leq \frac{\max\{\abs{\sigma_{j+1}^{n}},\abs{\sigma_{j}^{n}}\}}{\abs{\rho_{j+1}^{n}-\rho_{j}^{n}}} \leq 2\theta.
\end{align}
Consequently, using \eqref{eq:reconvalues}, for $\theta \in [0, 0.5],$ we write
\begin{nalign}\label{eq:ratiobd}
    \frac{\left(\rho_{j+\frac{3}{2}}^{n,-}-\rho_{j+\frac{1}{2}}^{n,-}\right)}{\rho_{j+1}^{n}-\rho_{j}^{n}} &= \frac{\rho_{j+1}^{n}-\rho_{j}^{n}}{\rho_{j+1}^{n}-\rho_{j}^{n}} + \frac{\left(\sigma_{j+1}^{n}-\sigma_{j}^{n}\right)}{2\left(\rho_{j+1}^{n}-\rho_{j}^{n}\right)} \geq 1 - \theta \geq \frac{1}{2},\\
     \frac{\left(\rho_{j+\frac{3}{2}}^{n,-}-\rho_{j+\frac{1}{2}}^{n,-}\right)}{\rho_{j+1}^{n}-\rho_{j}^{n}} &= \frac{\rho_{j+1}^{n}-\rho_{j}^{n}}{\rho_{j+1}^{n}-\rho_{j}^{n}} + \frac{\left(\sigma_{j+1}^{n}-\sigma_{j}^{n}\right)}{2\left(\rho_{j+1}^{n}-\rho_{j}^{n}\right)} \leq 1 + \theta \quad \mbox{for all} \, \jinz.
 \end{nalign}
Similarly, we obtain 
\begin{align}
    \frac{1}{2} \leq \frac{\left(\rho_{j+\frac{1}{2}}^{n,+}-\rho_{j-\frac{1}{2}}^{n,+}\right)}{\rho_{j+1}^{n}-\rho_{j}^{n}} \leq 1+\theta \quad \mbox{for all} \, \jinz.
\end{align}
\end{remark}

Next, we present a bound on the reconstructed density values \eqref{eq:reconvalues}, in the following lemma.
\begin{lemma}\label{lemma:reconvaluebd} (Bound on reconstructed values)
Suppose that the piecewise constant approximate solution at the time level $t^n$ given by the sequence $\{\rho^n_j\}_{\jinz},$ satisfies $\rho_{j}^n \geq 0$ for all $\jinz.$ Then, for each $\jinz,$ the left and right interface values defined in \eqref{eq:reconvalues} are estimated as follows \begin{align}\label{eq:recon_bda}
    \abs{\rho^{n,-}_{\jph}}, \abs{\rho^{n,+}_{\jmh}} \leq (1+\theta)\rho_{j}^n.
\end{align}
\end{lemma}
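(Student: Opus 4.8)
The plan is to reduce the whole statement to a single slope estimate and then finish with one application of the triangle inequality. Specifically, the goal is to show
\[
    \abs{\sigma_j^n} \le 2\theta\,\rho_j^n \quad \text{for every } j \in \mathbb{Z}.
\]
Granting this, and recalling from \eqref{eq:reconvalues} that $\rho^{n,-}_{\jph} = \rho_j^n + \frac{\sigma_j^n}{2}$ and $\rho^{n,+}_{\jmh} = \rho_j^n - \frac{\sigma_j^n}{2}$, the nonnegativity hypothesis $\rho_j^n \ge 0$ gives at once
\[
    \abs{\rho^{n,\pm}_{j\mp\frac12}} \le \rho_j^n + \frac{\abs{\sigma_j^n}}{2} \le \rho_j^n + \theta\,\rho_j^n = (1+\theta)\rho_j^n,
\]
which is exactly \eqref{eq:recon_bda}. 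Thus the entire argument rests on the slope bound.

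To prove the slope bound I would exploit the sign structure of the limiter in \eqref{slope-1}. Write $a := \rho_j^n - \rho_{j-1}^n$, $b := \frac{1}{2}(\rho_{j+1}^n - \rho_{j-1}^n)$ and $c := \rho_{j+1}^n - \rho_j^n$ for the three arguments of the $\minmod$, noting that $b = \frac{a+c}{2}$, and recall that $\minmod(a,b,c)$ is nonzero only when $a,b,c$ share a common sign, in which case it equals the argument of least modulus carrying that sign. The case $\sigma_j^n = 0$ is immediate, so assume $\sigma_j^n \ne 0$. The subtle point --- and the main obstacle --- is that the crude bound $\abs{\sigma_j^n} \le 2\theta\min(\abs{a},\abs{b},\abs{c})$ is by itself insufficient, since the increments $\abs{a}$ or $\abs{c}$ may well exceed $\rho_j^n$; one must instead let the sign of $\sigma_j^n$ dictate which increment to use, so that the nonnegativity of the appropriate neighbour can be invoked.

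Concretely, if $\sigma_j^n > 0$ then all three arguments are positive, so $a = \rho_j^n - \rho_{j-1}^n > 0$ and, since the $\minmod$ selects the smallest argument, $\minmod(a,b,c) \le a = \rho_j^n - \rho_{j-1}^n \le \rho_j^n$, the last inequality using $\rho_{j-1}^n \ge 0$; hence $\sigma_j^n \le 2\theta\,\rho_j^n$. Symmetrically, if $\sigma_j^n < 0$ then all arguments are negative, so $c = \rho_{j+1}^n - \rho_j^n < 0$ and, since the $\minmod$ now selects the argument closest to zero, $\abs{\minmod(a,b,c)} \le \abs{c} = \rho_j^n - \rho_{j+1}^n \le \rho_j^n$, using $\rho_{j+1}^n \ge 0$; hence $\abs{\sigma_j^n} \le 2\theta\,\rho_j^n$. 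In either case the slope bound holds, and the proof is complete. The only genuine idea, as noted, is the sign-dependent choice of the controlling increment (the left neighbour when the slope is positive, the right neighbour when it is negative); everything else is a routine triangle inequality.
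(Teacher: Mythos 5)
Your proposal is correct and rests on exactly the same ideas as the paper's proof: a case split on the sign of $\sigma_j^n$, the fact that $\minmod$ selects the smallest-modulus argument (so the left increment $\rho_j^n-\rho_{j-1}^n$ controls positive slopes and the right increment $\rho_{j+1}^n-\rho_j^n$ controls negative ones), and the nonnegativity of the corresponding neighbour. The only difference is organizational: you factor the argument through the intermediate slope bound $\abs{\sigma_j^n}\le 2\theta\rho_j^n$ and finish with one triangle inequality, while the paper bounds $\rho^{n,-}_{\jph}$ above and below directly in each case.
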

\begin{proof}
    We split the proof into two cases: \\
    \textbf{Case (i)} $(\sigma_{j}^n \geq 0):$ In this case, we observe that
    \begin{align}
        \rho^{n,-}_{\jph} &= \rho_{j}^n+ \frac{1}{2}\sigma_{j}^n \geq \rho_{j}^n,
    \end{align} and
\begin{nalign}\label{eq:rightval_ub}
    \rho^{n,-}_{\jph} &= \rho_{j}^n+ \frac{1}{2}\sigma_{j} \leq  \rho_{j}^n+ \frac{1}{2}2 \theta (\rho_{j}^n-\rho_{j-1}^n)
     = (1+\theta)\rho_j^n - \theta\rho_{j-1}^n  \leq (1+\theta)\rho_{j}^n. 
\end{nalign}
\textbf{Case (ii)} $(\sigma_{j}^n < 0):$ In this case, we have \begin{align}
    \rho^{n,-}_{\jph} = \rho_{j}^n+ \frac{1}{2}\sigma_{j} \leq \rho_{j}^n,
\end{align} and
\begin{nalign}\label{eq:rightval_lb}
    \rho^{n,-}_{\jph} &=\rho_{j}^n - 2\theta  \frac{(\rho_{j}^n- \rho_{j+1}^n)}{2} \geq (1-\theta)\rho_{j}^n +\theta \rho_{j+1}^n \geq 0.
\end{nalign}
By combining both the cases, it follows that $\abs{\rho^{n,-}_{\jph}} \leq (1+\theta)\rho_{j}^n$ for all $\jinz.$ In a similar way, we obtain $\abs{\rho^{n,+}_{\jmh}} \leq (1+\theta)\rho_{j}^n$ for all $\jinz.$
\end{proof}

\begin{remark}  
For each $n \in \mathbb{N} \cup \{0\},$ the piecewise linear reconstruction $\tilde{\rho}^{n}_{\Delta}$ in \eqref{eq:reconstruction_pl} satisfies the following property on its total variation (see Lemma 3.1, Chapter 4, \cite{godlewski1991hyperbolic}) 
\begin{align}\label{eq:tv_recomstruction}
    \mathrm{TV}(\tilde{\rho}^{n}_{\Delta}) & \leq  \sum_{\jinz}\abs{\rho_{j+1}^n- \rho_{j}^n}.
\end{align}
\end{remark}

\begin{lemma}\label{lemma:midtimebd}
    (Bound on mid-time density values)
 Suppose that the piecewise constant approximate solution at the time level $t^n$ given by the sequence $\{\rho^n_j\}_{\jinz},$ satisfies $\rho_{j}^n \geq 0$ for all $\jinz.$ Then, the mid-time density values defined in \eqref{eq:midtimevalues} can be estimated as follows \begin{align}\label{eq:recon_bd}
    \abs{\rho^{\nph,-}_{\jph}}, \abs{\rho^{\nph,+}_{\jmh}} \leq (1+\theta)(1+\lambda\norm{\partial_{\rho}f})\rho_{j}^n \quad \mbox{for all} \, \jinz.
    \end{align}
\end{lemma}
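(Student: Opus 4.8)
The plan is to bound the mid-time values in \eqref{eq:midtimevalues} by starting from the reconstructed interface values and controlling the correction term involving the flux difference. Recall that
\[
\rho_{j+\frac{1}{2}}^{n+\frac{1}{2},-}=\rho_{j+\frac{1}{2}}^{n,-}-\frac{\lambda}{2}\left(f(\rho_{j+\frac{1}{2}}^{n,-},A^{n,-}_{j+\frac{1}{2}})-f(\rho_{j-\frac{1}{2}}^{n,+},A^{n,+}_{j-\frac{1}{2}})\right).
\]
The first term $\rho_{j+\frac{1}{2}}^{n,-}$ is already controlled by Lemma \ref{lemma:reconvaluebd}, which gives $|\rho_{j+\frac{1}{2}}^{n,-}|\leq(1+\theta)\rho_{j}^n$. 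So the entire task reduces to bounding the flux-difference term by $(1+\theta)\lambda\norm{\partial_{\rho}f}\rho_j^n$, after which a triangle inequality assembles the claimed estimate.

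The key step is to rewrite the flux difference $f(\rho_{j+\frac{1}{2}}^{n,-},A^{n,-}_{j+\frac{1}{2}})-f(\rho_{j-\frac{1}{2}}^{n,+},A^{n,+}_{j-\frac{1}{2}})$ in a form amenable to the hypotheses. The natural move is to insert and subtract a term so as to separate the dependence on the density argument from that on the convolution argument: write the difference as $[f(\rho_{j+\frac{1}{2}}^{n,-},A^{n,-}_{j+\frac{1}{2}})-f(\rho_{j-\frac{1}{2}}^{n,+},A^{n,-}_{j+\frac{1}{2}})]+[f(\rho_{j-\frac{1}{2}}^{n,+},A^{n,-}_{j+\frac{1}{2}})-f(\rho_{j-\frac{1}{2}}^{n,+},A^{n,+}_{j-\frac{1}{2}})]$. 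However, I suspect the cleaner route—and the one that yields the stated constant without an extra convolution-dependent term—is to exploit Hypothesis \ref{hyp:H1}(ii), namely $f(0,A)=0$. Using this together with the mean value theorem in the density variable, one has $f(\rho,A)=\partial_\rho f(\xi,A)\,\rho$ for some intermediate $\xi$, so that each flux value is bounded by $\norm{\partial_\rho f}$ times the corresponding density. This suggests bounding the flux difference directly in terms of $|\rho_{j+\frac{1}{2}}^{n,-}|$ and $|\rho_{j-\frac{1}{2}}^{n,+}|$ rather than their difference.

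Concretely, I would estimate
\[
\left|f(\rho_{j+\frac{1}{2}}^{n,-},A^{n,-}_{j+\frac{1}{2}})-f(\rho_{j-\frac{1}{2}}^{n,+},A^{n,+}_{j-\frac{1}{2}})\right|\leq \norm{\partial_\rho f}\left(|\rho_{j+\frac{1}{2}}^{n,-}|+|\rho_{j-\frac{1}{2}}^{n,+}|\right),
\]
using $f(0,A)=0$ on each term separately via the mean value theorem. Then Lemma \ref{lemma:reconvaluebd} applied at cell $j$ gives $|\rho_{j+\frac{1}{2}}^{n,-}|\leq(1+\theta)\rho_j^n$ and at cell $j-1$ gives $|\rho_{j-\frac{1}{2}}^{n,+}|\leq(1+\theta)\rho_{j-1}^n$. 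The subtlety is that this produces a $\rho_{j-1}^n$ term, so to land exactly on $(1+\theta)(1+\lambda\norm{\partial_\rho f})\rho_j^n$ one must either invoke the nonnegativity to relate neighboring values or, more likely, choose the insertion points so that both reconstructed values belong to the \emph{same} cell $j$. Inspecting \eqref{eq:midtimevalues}, the flux difference uses $\rho_{j+\frac{1}{2}}^{n,-}$ (right face of cell $j$) and $\rho_{j-\frac{1}{2}}^{n,+}$ (left face of cell $j$), and by Lemma \ref{lemma:reconvaluebd} \emph{both} of these are bounded by $(1+\theta)\rho_j^n$. This is the point I expect to be the crux: recognizing that both density arguments in the flux difference are interface values of the \emph{single} cell $j$, so each is controlled by $(1+\theta)\rho_j^n$.

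With that observation the estimate closes cleanly:
\[
\left|\frac{\lambda}{2}\left(f(\rho_{j+\frac{1}{2}}^{n,-},\cdot)-f(\rho_{j-\frac{1}{2}}^{n,+},\cdot)\right)\right|\leq\frac{\lambda}{2}\norm{\partial_\rho f}\cdot 2(1+\theta)\rho_j^n=\lambda\norm{\partial_\rho f}(1+\theta)\rho_j^n,
\]
and combining with $|\rho_{j+\frac{1}{2}}^{n,-}|\leq(1+\theta)\rho_j^n$ via the triangle inequality yields $|\rho_{j+\frac{1}{2}}^{n+\frac{1}{2},-}|\leq(1+\theta)(1+\lambda\norm{\partial_\rho f})\rho_j^n$. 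The bound for $\rho_{j-\frac{1}{2}}^{n+\frac{1}{2},+}$ follows identically, since by \eqref{eq:midtimevalues} it carries the same flux-difference correction and its leading term $\rho_{j-\frac{1}{2}}^{n,+}$ is likewise bounded by $(1+\theta)\rho_j^n$ through Lemma \ref{lemma:reconvaluebd}. The main thing to verify carefully is that Hypothesis \ref{hyp:H1} genuinely permits bounding $|f(\rho,A)|$ by $\norm{\partial_\rho f}\,|\rho|$—this rests on $f(0,A)=0$ together with $\partial_\rho f\in\mathrm{L}^\infty$, both guaranteed by \ref{hyp:H1}, so no dependence on $A$ or the convolution approximation enters the final constant.
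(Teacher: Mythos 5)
Your proposal is correct and follows essentially the same route as the paper's proof: both bound the flux difference by applying the mean value theorem in the density variable after exploiting $f(0,A)=0$ from Hypothesis \ref{hyp:H1}, and both rely on the key observation that $\rho_{j+\frac{1}{2}}^{n,-}$ and $\rho_{j-\frac{1}{2}}^{n,+}$ are traces of the \emph{same} cell $j$, so Lemma \ref{lemma:reconvaluebd} bounds each by $(1+\theta)\rho_j^n$. The paper's explicit insertion of $f(0,A_{\jph}^{n,-})$ and $f(0,A_{\jmh}^{n,+})$ is exactly your term-by-term mean value theorem argument written out.
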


\begin{proof}
    Using  \eqref{eq:midtimevalues} and the hypothesis \eqref{hyp:H1}, we estimate:
\begin{nalign}\label{eq:midvalue_bd}
    \abs{\rho^{\nph,-}_{\jph}}     &\leq \abs{\rho^{n,-}_{\jph}} + \frac{\lambda}{2}\abs{ f(\rho^{n,-}_{\jph}, A_{\jph}^{n,-})-f(0, A_{\jph}^{n,-}) +f(0, A_{\jmh}^{n,+}) - f(\rho^{n,+}_{\jmh}, A_{\jmh}^{n,+})}\\
    & \leq \abs{\rho^{n,-}_{\jph}} + \frac{\lambda}{2}\abs{ \partial_{\rho}f(\bar{\rho}^{n,-}_{\jph}, A_{\jph}^{n,-})}\abs{\rho^{n,-}_{\jph}} +\frac{\lambda}{2} \abs{\partial_{\rho}f(\bar{\rho}^{n,+}_{\jmh}, A_{\jmh}^{n,+})}\abs{\rho^{n,+}_{\jmh}}\\
    & \leq (1+\theta)(1+\lambda\norm{\partial_{\rho}f})\rho_{j}^n,
\end{nalign}
where $\bar{\rho}^{n,-}_{\jph} \in \mathcal{I}(0,{\rho}^{n,-}_{\jph})$ and $\bar{\rho}^{n,+}_{\jmh} \in \mathcal{I}(0, {\rho}^{n,+}_{\jmh}).$
Similarly, we obtain $\abs{\rho^{\nph,+}_{\jmh}} \leq (1+\theta)(1+\lambda\norm{\partial_{\rho}f})\rho_{j}^n.$
\end{proof}

Now, we establish that the proposed MUSCL-Hancock type scheme \eqref{eq:mh} gives non-negative solutions when the initial data is non-negative. 
\begin{theorem}\label{theorem:positivity}\label{theorem:pstivity}
    (Positivity-preserving property) Let the initial datum $\rho_0 \in \mathrm{L}^{\infty}(\mathbb{R};\mathbb{R}_{+}).$ Then, the approximate solutions  $\rho_{\Delta}$ obtained from the proposed scheme \eqref{eq:mh} satisfies $\rho_\Delta(t,x) \geq 0$ for a.e. $(t,x) \in \mathbb{R} \times \mathbb{R}_{+},$
provided the CFL condition
    \begin{align}\label{eq:cfl_psty}
         \frac{\Delta t}{\Delta x} &\leq \min\left\{\frac{8-27\alpha}{27\norm{\partial_\rho f}}, \frac{2}{27\norm{\partial_{\rho}f}}, \frac{\alpha}{\norm{\partial_\rho f}}\right\},
    \end{align} holds.
\end{theorem}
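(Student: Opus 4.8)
The plan is to prove non-negativity by induction on the time level $n$. The base case is immediate: since $\rho_0 \geq 0$, each cell average $\rho_j^0 = \frac{1}{\Delta x}\int_{x_\jmh}^{x_\jph}\rho_0\,\dif x \geq 0$. For the inductive step I assume $\rho_j^n \geq 0$ for all $j$ and aim to show $\rho_j^{n+1}\geq 0$. The engine of the argument is Hypothesis~\ref{hyp:H1}(ii), $f(0,A)=0$, which via the mean value theorem lets me write $f(\rho,A)=\partial_{\rho}f(\xi,A)\,\rho$ with $\abs{\partial_{\rho}f}\leq\norm{\partial_{\rho}f}$ for every flux evaluation in the scheme, and similarly turns each predictor flux increment in \eqref{eq:midtimevalues} into $\partial_{\rho}f$ times a difference of reconstructed values. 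This linearizes the whole update into a combination of the reconstructed interface values, with coefficients carrying only the factors $\lambda\,\partial_{\rho}f$ and $\alpha$.

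First I would substitute the Lax--Friedrichs flux \eqref{eq:midtimenumflux} into the scheme \eqref{eq:mh} and eliminate the mid-time quantities using the predictor relations \eqref{eq:midtimevalues}, exploiting two structural facts: the two mid-time values attached to cell $j$ share the common flux increment $\Phi_j := f(\rho^{n,-}_{\jph},A^{n,-}_{\jph})-f(\rho^{n,+}_{\jmh},A^{n,+}_{\jmh})$, and $\rho_j^n=\tfrac12(\rho^{n,-}_{\jph}+\rho^{n,+}_{\jmh})$. This recasts $\rho_j^{n+1}$ as an explicit linear combination of the six reconstructed interface values in the stencil, organized by their source cell $j-1$, $j$, $j+1$. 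By Lemma~\ref{lemma:reconvaluebd}, together with the one-sided refinement $(1-\theta)\rho_k^n \leq \rho^{n,-}_{k+\frac{1}{2}},\,\rho^{n,+}_{k-\frac{1}{2}}\leq(1+\theta)\rho_k^n$ that is read off from its case analysis, every such value is non-negative and comparable to its cell average.

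I would then show each cell-group is non-negative under the CFL condition \eqref{eq:cfl_psty}. The two outer groups carry overall prefactors of the form $\tfrac12(\alpha\mp\lambda\,\partial_{\rho}f)$; the constraint $\lambda\leq\alpha/\norm{\partial_{\rho}f}$ forces these to be non-negative (the standard statement that the Lax--Friedrichs viscosity dominates the convective part), while the accompanying bracket is bounded below by $[(1-\theta)-\lambda\norm{\partial_{\rho}f}]\rho^n_{j\pm1}$ after inserting the one-sided reconstruction bounds; the blanket smallness $\lambda\norm{\partial_{\rho}f}\leq\tfrac{2}{27}$ furnished by the second CFL piece makes this non-negative and simultaneously keeps the predictor factors $(1\pm\tfrac{\lambda}{2}\partial_{\rho}f)$ positive. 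The central group collapses, after collecting terms, to $(1-\alpha)\rho_j^n$ plus cross terms of order $\lambda\norm{\partial_{\rho}f}$ and $\alpha$; bounding the worst-case contributions with $\rho^{n,\pm}_{\cdot}\leq(1+\theta)\rho_j^n$ and using $(1+\theta)\leq\tfrac32$ produces precisely the remaining requirement $(1+\theta)^3(\alpha+\lambda\norm{\partial_{\rho}f})\leq 1$. The cubic in $(1+\theta)$ — accumulating one factor each from the reconstruction, predictor, and corrector layers — is exactly what yields the threshold $\alpha<\tfrac{8}{27}=(1+\theta)^{-3}\big|_{\theta=1/2}$ and the explicit bound $\lambda\leq\tfrac{8-27\alpha}{27\norm{\partial_{\rho}f}}$.

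The main obstacle I anticipate is that the predictor (mid-time) values are not sign-definite a priori: Lemma~\ref{lemma:midtimebd} controls only their magnitude, so I cannot argue positivity flux-by-flux. The resolution is precisely the regrouping above — the possibly-negative coefficient of a far value $\rho^{n,-}_{j+\frac{3}{2}}$ (resp. $\rho^{n,+}_{j-\frac{3}{2}}$), which is of order $\lambda\alpha\norm{\partial_{\rho}f}$, is absorbed by the $O(\alpha)$-positive coefficient of its same-cell partner $\rho^{n,+}_{\jph}$ (resp. $\rho^{n,-}_{\jmh}$) once the comparability bounds are invoked. Keeping these competing $O(\lambda)$ and $O(\lambda^2)$ cross terms organized, and checking that the three pieces of \eqref{eq:cfl_psty} are simultaneously what the outer groups, the predictor factors, and the central coefficient each demand, is the delicate bookkeeping at the heart of the proof.
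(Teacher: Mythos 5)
Your proposal is correct, and it reaches the conclusion by a genuinely different decomposition than the paper. The paper rewrites the update in incremental (Harten-type) form \eqref{eq:positivityscheme}--\eqref{eq:aandb}, with coefficients $a^{\nph}_{\jmh}, b^{\nph}_{\jph}$ built as divided differences of the numerical flux against differences of cell averages; controlling these requires the slope-ratio bounds of Remark~\ref{remark:ratioslopes} (equation \eqref{eq:ratiobd}), a further splitting $a = \bar a + \dots$, $b = \bar b + \dots$ via the predictor relations (with the residuals $c^n_{\jmh}, d^n_{\jph}$ bounded through Lemma~\ref{lemma:midtimebd}), and a separately estimated convolution-mismatch term \eqref{eq:Fdifbd} coming from the fact that $F^{\nph}_{\jph}$ and $F^{\nph}_{\jmh}$ carry different convolution values. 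You instead never divide by $\rho^n_j-\rho^n_{j-1}$ at all: using $\rho^n_j=\tfrac12(\rho^{n,-}_{\jph}+\rho^{n,+}_{\jmh})$, the shared predictor increment $\Phi_j$, and the mean value theorem with $f(0,A)=0$ applied to every flux evaluation, you expand $\rho^{n+1}_j$ as a linear combination of the six reconstructed interface values and verify non-negativity cell-group by cell-group. This buys two simplifications: the slope-ratio machinery of Remark~\ref{remark:ratioslopes} is replaced by the elementary two-sided bound $(1-\theta)\rho^n_k \le \rho^{n,\mp}_{k\pm\frac12} \le (1+\theta)\rho^n_k$ (which indeed follows from the case analysis in Lemma~\ref{lemma:reconvaluebd} under the induction hypothesis), and the convolution mismatch disappears as a separate object because the bound $\abs{\partial_\rho f}\le\norm{\partial_\rho f}$ is uniform in the $A$-argument. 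Your bookkeeping checks out: the outer groups are non-negative under $\lambda\norm{\partial_\rho f}\le\alpha$ together with $\lambda\norm{\partial_\rho f}\le\tfrac2{27}$, and the central group reduces (since $\rho^{n,-}_{\jph}+\rho^{n,+}_{\jmh}=2\rho^n_j$ exactly) to $(1-\alpha)\rho^n_j$ plus $O(\lambda\norm{\partial_\rho f})\,\rho^n_j$ corrections, which the first CFL piece dominates --- your stated sufficient condition $(1+\theta)^3(\alpha+\lambda\norm{\partial_\rho f})\le 1$ is precisely equivalent to $\lambda\le\tfrac{8-27\alpha}{27\norm{\partial_\rho f}}$ at $\theta=\tfrac12$. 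The one thing the paper's route buys that yours does not: the objects $a, b, \bar a, \bar b, c, d$ introduced in its proof are reused verbatim in the $\mathrm{L}^\infty$, BV, and time-continuity arguments (Theorems~\ref{theorem:Linfty}--\ref{theorem:L1timecty}), so its heavier setup amortizes across the later estimates, whereas your argument is self-contained but special to positivity.
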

\begin{proof}
To prove this result, we use the principle of mathematical induction. For the base case $n=0,$  we have $\rho^{0}_{j} \geq 0$ for all $\jinz$ by assumption. Now, for any $n \in \mathbb{N}\cup \{0\},$ assume that $\rho_{j}^n \geq 0$ for all $\jinz.$ We show that $\rho_{j}^{n+1} \geq 0 $ for all $\jinz.$ To begin with, we write the scheme \eqref{eq:mh} in the form 
\begin{align}\label{eq:positivityscheme}
       \rho^{n+1}_{j} &= \rho_{j}^{n} - a^{\nph}_{j-\frac{1}{2}} (\rho_{j}^{n}-\rho_{j-1}^{n})          + b^{\nph}_{j+\frac{1}{2}}(\rho^{n}_{j+1} - \rho^{n}_{j})\\
   & \hspace{0.5cm}+ \lambda\left[F(\rho_{j+\frac{1}{2}}^{\nph,-}, \rho_{j-\frac{1}{2}}^{\nph, +}, A^{\nph}_{j-\frac{1}{2}}) - 
 F(\rho_{j+\frac{1}{2}}^{\nph,-}, \rho_{j-\frac{1}{2}}^{\nph,+}, A^{\nph}_{j+\frac{1}{2}})\right], \no  
\end{align}
where we define
\begin{nalign}\label{eq:aandb}
    a_{j-\frac{1}{2}}^{\nph} &\coloneqq  \lambda\frac{\left[F(\rho_{j+\frac{1}{2}}^{\nph,-}, \rho_{j-\frac{1}{2}}^{\nph, +}, A^{\nph}_{j-\frac{1}{2}}) - F(\rho_{j-\frac{1}{2}}^{\nph,-}, \rho_{j-\frac{1}{2}}^{\nph,+}, A^{\nph}_{j-\frac{1}{2}})\right]}{(\rho_{j+\frac{1}{2}}^{\nph, -}-\rho_{j-\frac{1}{2}}^{\nph,-})} \left(\frac{\rho_{j+\frac{1}{2}}^{\nph, -}-\rho_{j-\frac{1}{2}}^{\nph,-}}{\rho_{j}^{n}-\rho_{j-1}^{n}  } \right),\\
    b_{j+\frac{1}{2}}^{\nph} &\coloneqq - \lambda\frac{\left[F(\rho_{j+\frac{1}{2}}^{\nph,-}, \rho_{j+\frac{1}{2}}^{\nph, +}, A^{\nph}_{j+\frac{1}{2}}) - F(\rho_{j+\frac{1}{2}}^{\nph,-}, \rho_{j-\frac{1}{2}}^{\nph,+}, A^{\nph}_{j+\frac{1}{2}})\right]}{(\rho_{j+\frac{1}{2}}^{\nph, +}-\rho_{j-\frac{1}{2}}^{\nph,+})} \left(\frac{\rho_{j+\frac{1}{2}}^{\nph, +}-\rho_{j-\frac{1}{2}}^{\nph,+}}{\rho_{j+1}^{n}-\rho_{j}^{n}  } \right). 
\end{nalign}
Now, using the definition \eqref{eq:midtimenumflux}, we observe that 
\begin{nalign}\label{eq:Fdifratio_psty}
    &\frac{\left[F(\rho_{j+\frac{1}{2}}^{\nph,-}, \rho_{j-\frac{1}{2}}^{\nph, +}, A^{\nph}_{j-\frac{1}{2}}) - F(\rho_{j-\frac{1}{2}}^{\nph,-}, \rho_{j-\frac{1}{2}}^{\nph,+}, A^{\nph}_{j-\frac{1}{2}})\right]}{(\rho_{j+\frac{1}{2}}^{\nph, -}-\rho_{j-\frac{1}{2}}^{\nph,-})} \\& \spc = \frac{1}{2}\frac{\left(f(\rho_{\jph}^{\nph,-}, A^{\nph}_{\jmh})- f(\rho_{\jmh}^{\nph,-}, A^{\nph}_{\jmh})\right)}{(\rho_{\jph}^{\nph,-}- \rho_{\jmh}^{\nph,-})} + \frac{1}{2\lambda}\alpha\\
    &\spc = \frac{1}{2}\partial_{\rho}f(\bar{\rho}_{j}^{\nph,-},A^{\nph}_{\jmh})+ \frac{1}{2\lambda}\alpha,
\end{nalign} for some $\bar{\rho}_{j}^{\nph,-} \in \mathcal{I}(\rho_{\jmh}^{\nph,-},\rho_{\jph}^{\nph,-}).$
Upon expanding the term $\displaystyle \frac{\rho_{j+\frac{1}{2}}^{\nph, -}-\rho_{j-\frac{1}{2}}^{\nph,-}}{\rho_{j}^{n}-\rho_{j-1}^{n}}$ using \eqref{eq:midtimevalues} and further using  \eqref{eq:Fdifratio_psty}, we can write
\begin{align}\label{eq:arefor_psty}
     a_{j-\frac{1}{2}}^{\nph} & =\bar{a}_{j-\frac{1}{2}}^{\nph} + \frac{c_{\jmh}^{n}}{2(\rho_{j}^{n}-\rho_{j-1}^{n})}{\left(\lambda\partial_{\rho}f(\bar{\rho}_{j}^{\nph,-},A^{\nph}_{\jmh})+ \alpha\right)} , 
\end{align}
where we define
\begin{nalign}\label{eq:abarandc}
 \bar{a}_{j-\frac{1}{2}}^{\nph} &:= \frac{1}{2}\left(\frac{\rho_{j+\frac{1}{2}}^{n, -}-\rho_{j-\frac{1}{2}}^{n,-}}{\rho_{j}^{n}-\rho_{j-1}^{n}  } \right) \left(\lambda\partial_{\rho}f(\bar{\rho}_{j}^{\nph,-},A^{\nph}_{\jmh})+ \alpha\right),
\\
    c_{\jmh}^{n} &\coloneqq  -\frac{\lambda}{2}\left(f(\rho_{\jph}^{n,-}, A_{\jph}^{n,-})- f(\rho_{\jmh}^{n,+}, A_{\jmh}^{n,+})\right)  +\frac{\lambda}{2}\left(f(\rho_{\jmh}^{n,-}, A_{\jmh}^{n,-})- f(\rho_{\jmtbt}^{n,+}, A_{\jmtbt}^{n,+})\right).
\end{nalign}
Now,  proceeding as in  \eqref{eq:midvalue_bd} to apply the mean value theorem and subsequently employing the estimate \eqref{eq:recon_bd}, the term $c_{\jmh}^{n}$ can be estimated as \begin{align}\label{eq:cbd}
    \abs{c_{\jmh}^{n}} &\leq {\lambda}\norm{\partial_{\rho}f}(1+\theta)(\rho_{j}^n+ \rho_{j-1}^{n}).
\end{align}
Through similar arguments,  the term $b_{\jph}^{\nph}$ in \eqref{eq:aandb} also can be written as \begin{nalign}\label{eq:b_refor_psty}
     b_{j+\frac{1}{2}}^{\nph} & =\bar{b}_{j+\frac{1}{2}}^{\nph}+ \frac{1}{2} \frac{d_{\jph}^{n}}{(\rho_{j+1}^{n}-\rho_{j}^{n})}\left(\alpha-\lambda\partial_{\rho}f(\bar{\rho}_{j}^{\nph,+},A^{\nph}_{\jph})\right),
\end{nalign}
where $\bar{\rho}_{j}^{\nph,+} \in \mathcal{I}({\rho}_{\jmh}^{\nph,+}, {\rho}_{\jph}^{\nph,+})$ and we define
\begin{align}
 \bar{b}_{j+\frac{1}{2}}^{\nph}&:= \frac{1}{2}\left(\frac{\rho_{j+\frac{1}{2}}^{n, +}-\rho_{j-\frac{1}{2}}^{n,+}}{\rho_{j+1}^{n}-\rho_{j}^{n}  } \right)\left(\alpha-\lambda\partial_{\rho}f(\bar{\rho}_{j}^{\nph,+},A^{\nph}_{\jph})\right),\\
    d_{\jph}^{n} &\coloneqq -\frac{\lambda}{2}\left(f(\rho_{j+\frac{3}{2}}^{n,-}, A_{j+\frac{3}{2}}^{n,-})- f(\rho_{\jph}^{n,+}, A_{\jph}^{n,+})\right)  +\frac{\lambda}{2}\left(f(\rho_{\jph}^{n,-}, A_{\jph}^{n,-})- f(\rho_{\jmh}^{n,+}, A_{\jmh}^{n,+})\right).
\end{align}
Furthermore, similar to \eqref{eq:cbd}, the term $d_{\jph}^{n}$ can be estimated as\begin{align}\label{eq:dbd}
    \abs{d_{\jph}^{n}} \leq {\lambda}\norm{\partial_{\rho}f}(1+\theta)(\rho_{j}^n+ \rho_{j+1}^{n}) .
\end{align}
In view of \eqref{eq:arefor_psty} and \eqref{eq:b_refor_psty}, the  expression \eqref{eq:positivityscheme} now reduces to
\begin{nalign}\label{eq:mh_refor}
      \rho_{j}^{n+1} 
 &= \rho_{j}^{n}(1-\bar{a}^{\nph}_{j-\frac{1}{2}}-\bar{b}^{\nph}_{j+\frac{1}{2}}) + \bar{a}^{\nph}_{j-\frac{1}{2}} \rho_{j-1}^{n}          + \bar{b}^{\nph}_{j+\frac{1}{2}}\rho^{n}_{j+1}\no\\& \spc - \frac{c_{\jmh}^{n}}{2}  \left(\lambda\partial_{\rho}f(\bar{\rho}_{j}^{\nph,-},A^{\nph}_{\jmh})+ \alpha\right)  + \frac{d_{\jph}^{n}}{2} \left(\alpha-\lambda\partial_{\rho}f(\bar{\rho}_{j}^{\nph,+},A^{\nph}_{\jph})\right)\\& \hspace{0.5cm}+ \lambda\left[F(\rho_{j+\frac{1}{2}}^{\nph,-}, \rho_{j-\frac{1}{2}}^{\nph, +}, A^{\nph}_{j-\frac{1}{2}}) 
 -F(\rho_{j+\frac{1}{2}}^{\nph,-}, \rho_{j-\frac{1}{2}}^{\nph,+}, A^{\nph}_{j+\frac{1}{2}})\right]. \no  
\end{nalign}
As a consequence of \eqref{eq:cbd} and \eqref{eq:dbd} and the CFL condition \eqref{eq:cfl_psty} ($\lambda \norm{\partial_\rho f} \leq \alpha$), we subsequently obtain
\begin{nalign}\label{eq:positivitylb}
     \rho_{j}^{n+1} 
 & \geq \left(1-\bar{a}^{\nph}_{j-\frac{1}{2}}-\bar{b}^{\nph}_{j+\frac{1}{2}}-\frac{1}{2}\lambda\norm{\partial_{\rho}f}(1+\theta){\left(\lambda\partial_{\rho}f(\bar{\rho}_{j}^{\nph,-},A^{\nph}_{\jmh})+ \alpha\right)}\right.\\ & \spc \spc \left. -\frac{1}{2}\lambda\norm{\partial_{\rho}f}(1+\theta){\left(\alpha-\lambda\partial_{\rho}f(\bar{\rho}_{j}^{\nph,+},A^{\nph}_{\jph})\right)}\right)\rho_{j}^{n}\\& \spc + \left(\bar{a}^{\nph}_{j-\frac{1}{2}}-\frac{1}{2}\lambda\norm{\partial_{\rho}f}(1+\theta)\left(\lambda\partial_{\rho}f(\bar{\rho}_{j}^{\nph,-},A^{\nph}_{\jmh})+ \alpha\right)\right) \rho_{j-1}^{n}   \\& \spc + \left(\bar{b}^{\nph}_{j+\frac{1}{2}}- \frac{1}{2}\lambda\norm{\partial_{\rho}f}(1+\theta) \left(\alpha-\lambda\partial_{\rho}f(\bar{\rho}_{j}^{\nph,+},A^{\nph}_{\jph})\right)\right)\rho^{n}_{j+1}\\& \hspace{0.5cm}- \lambda\left[F(\rho_{j+\frac{1}{2}}^{\nph,-}, \rho_{j-\frac{1}{2}}^{\nph, +}, A^{\nph}_{j-\frac{1}{2}}) 
 -F(\rho_{j+\frac{1}{2}}^{\nph,-}, \rho_{j-\frac{1}{2}}^{\nph,+}, A^{\nph}_{j+\frac{1}{2}})\right].    
\end{nalign}
Next, invoking Remark \ref{remark:ratioslopes} (equation \eqref{eq:ratiobd}), the CFL condition \eqref{eq:cfl_psty}  and the fact that $\theta \in [0,0.5],$ we obtain an estimate 
\begin{nalign}\label{eq:abar}
& \bar{a}^{\nph}_{j-\frac{1}{2}}-\frac{1}{2}\lambda\norm{\partial_{\rho}f}(1+\theta)\left(\lambda\partial_{\rho}f(\bar{\rho}_{j}^{\nph,-},A^{\nph}_{\jmh})+ \alpha\right)\\&\spc= \frac{1}{2}\left(\lambda\partial_{\rho}f(\bar{\rho}_{j}^{\nph,-},A^{\nph}_{\jmh})+ \alpha\right) \left(\left(\frac{\rho_{j+\frac{1}{2}}^{n, -}-\rho_{j-\frac{1}{2}}^{n,-}}{\rho_{j}^{n}-\rho_{j-1}^{n}  } \right) - \lambda\norm{\partial_{\rho}f}(1+\theta) \right)\\
& \spc \geq 0.
\end{nalign}

Similarly, we deduce that
\begin{align}\label{eq:bbarminus_lb}
    \bar{b}^{\nph}_{j+\frac{1}{2}}- \frac{1}{2}\lambda\norm{\partial_{\rho}f}(1+\theta) \left(\alpha-\lambda\partial_{\rho}f(\bar{\rho}_{j}^{\nph,+},A^{\nph}_{\jph})\right) \geq 0.
\end{align}
Furthermore, using the definition of $\bar{a}_{\jmh}^{\nph}$ from \eqref{eq:abarandc} and subsequently applying  Remark \ref{remark:ratioslopes} (equation \eqref{eq:ratiobd}), 
we obtain
\begin{nalign} \label{eq:abarplus_ub}
     &\bar{a}^{\nph}_{j-\frac{1}{2}}+\frac{1}{2}\lambda\norm{\partial_{\rho}f}(1+\theta)\left(\lambda\partial_{\rho}f(\bar{\rho}_{j}^{\nph,-},A^{\nph}_{\jmh})+ \alpha\right) \\ &\spc = \frac{1}{2} \left(\left(\frac{\rho_{j+\frac{1}{2}}^{n, -}-\rho_{j-\frac{1}{2}}^{n,-}}{\rho_{j}^{n}-\rho_{j-1}^{n}  } \right) + \lambda\norm{\partial_{\rho}f}(1+\theta) \right) \left(\lambda\partial_{\rho}f(\bar{\rho}_{j}^{\nph,-},A^{\nph}_{\jmh})+ \alpha\right)\\
     & \spc \leq \frac{1}{2} (1+\theta)\left( 1 + \lambda\norm{\partial_{\rho}f}\right)\left(\lambda\norm{\partial_{\rho}f}+ \alpha\right)   
      \leq  \frac{3}{4} \times \frac{29}{27} \times \frac{8}{27}  \leq \frac{1}{3},
\end{nalign}
where the last inequality is obtained from the CFL assumption \eqref{eq:cfl_psty}.
Analogously, we obtain
\begin{align}\label{eq:bbarplus_ub}
\bar{b}^{\nph}_{j+\frac{1}{2}}+\frac{1}{2} \lambda\norm{\partial_{\rho}f} \left(\alpha-\lambda\partial_{\rho}f(\bar{\rho}_{j}^{\nph,+},A^{\nph}_{\jph})\right) \leq \frac{1}{3}.
\end{align}
Now, simplifying the last term in \eqref{eq:positivitylb}, using hypothesis \eqref{hyp:H1} along similar lines to \eqref{eq:midvalue_bd}, and subsequently using \eqref{eq:recon_bd} and the CFL condition \eqref{eq:cfl_psty}, we obtain the estimate
\begin{nalign}\label{eq:Fdifbd}
    & \lambda \abs{F(\rho_{j+\frac{1}{2}}^{\nph,-}, \rho_{j-\frac{1}{2}}^{\nph, +}, A^{\nph}_{j-\frac{1}{2}}) 
 -F(\rho_{j+\frac{1}{2}}^{\nph,-}, \rho_{j-\frac{1}{2}}^{\nph,+}, A^{\nph}_{j+\frac{1}{2}})} \\ & \spc = \frac{\lambda}{2} \abs{f(\rho^{\nph,-}_{\jph}, A_{\jmh}^{\nph})+ f(\rho^{\nph,+}_{\jmh}, A_{\jmh}^{\nph}) - f(\rho^{\nph,-}_{\jph}, A_{\jph}^{\nph})- f(\rho^{\nph,+}_{\jmh}, A_{\jph}^{\nph})} \\& \spc \leq  \frac{\lambda}{2}\left( \abs{\partial_{\rho}f(\tilde{\rho}^{\nph,-}_{\jph}, A_{\jmh}^{\nph})}\abs{{\rho}^{\nph,-}_{\jph}}+ \abs{\partial_{\rho}f(\tilde{\rho}^{\nph,+}_{\jmh}, A_{\jmh}^{\nph})}\abs{\rho^{\nph,+}_{\jmh}} \right.\\ &\spc \spc \left.+ \abs{\partial_{\rho}f(\bar{\rho}^{\nph,-}_{\jph}, A_{\jph}^{\nph})}\abs{\rho^{\nph,-}_{\jph}}+ \abs{\partial_{\rho}f(\hat{\rho}^{\nph,+}_{\jmh}, A_{\jph}^{\nph})}\abs{{\rho}^{\nph,+}_{\jmh}} \right)\\ & \spc \leq 2(1+\theta)\lambda\norm{\partial_{\rho}f}(1+\lambda \norm{\partial_{\rho}f})\rho_{j}^n \leq  2\times \frac{3}{2}\times  \frac{2}{27} \times \frac{29}{27} \rho_{j}^n \leq   \frac{1}{3}\rho_{j}^n.
\end{nalign}
Finally, invoking the estimates \eqref{eq:abar},  \eqref{eq:bbarminus_lb}, \eqref{eq:abarplus_ub}, \eqref{eq:bbarplus_ub} and \eqref{eq:Fdifbd} in \eqref{eq:positivitylb}, we arrive at
\begin{nalign}
     \rho_{j}^{n+1} \geq 0.
\end{nalign}
This concludes the proof.
\end{proof}

\begin{theorem}\label{theorem:L1stability}($\mathrm{L}^1$-stability)
Let the initial datum $\rho_0 \in \mathrm{L}^{\infty}(\mathbb{R};\mathbb{R}_{+}).$  If the CFL condition \eqref{eq:cfl_psty} holds,  then the approximate solution  $\rho_{\Delta}$  computed using the proposed scheme \eqref{eq:mh} is $\mathrm{L}^1-$stable:
\begin{nalign}
 \norm{\rho_\Delta(t,\cdot)}_{\mathrm{L}^{1}(\mathbb{R})} & =   \norm{\rho_0}_{\mathrm{L}^{1}(\mathbb{R})} \quad \mbox{for all} \,\, t > 0.
\end{nalign}
\end{theorem}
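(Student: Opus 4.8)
The plan is to combine two ingredients: the non-negativity of the iterates established in Theorem~\ref{theorem:positivity}, and the conservation form of the update~\eqref{eq:mh}. Since $\rho_0 \geq 0$ and, by Theorem~\ref{theorem:positivity}, $\rho_j^n \geq 0$ for every $j \in \mathbb{Z}$ and $n$, the $\mathrm{L}^1$ norm of the piecewise constant approximation reduces to a sum without absolute values,
\begin{align*}
\norm{\rho_\Delta(t^n,\cdot)}_{\mathrm{L}^1(\mathbb{R})} = \Delta x \sum_{\jinz} \abs{\rho_j^n} = \Delta x \sum_{\jinz} \rho_j^n.
\end{align*}
It therefore suffices to show that the discrete mass $\Delta x \sum_{\jinz}\rho_j^n$ is independent of $n$ and equals $\norm{\rho_0}_{\mathrm{L}^1}$, which I would establish by induction on $n$.

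For the inductive step, assume $\{\rho_j^n\}_{\jinz} \in \ell^1(\mathbb{Z})$. First I would verify that the numerical fluxes are summable: using the Lax-Friedrichs form~\eqref{eq:midtimenumflux}, the property $f(0,A)=0$ from~\eqref{hyp:H1}, and the mean value theorem, one bounds $\abs{F_{\jph}^{\nph}}$ by a constant multiple of $\abs{\rho_{\jph}^{\nph,-}} + \abs{\rho_{\jph}^{\nph,+}}$, which is in turn controlled by the neighbouring cell values $\rho_j^n,\rho_{j+1}^n$ through Lemma~\ref{lemma:midtimebd}. Hence $\{F_{\jph}^{\nph}\}_{\jinz} \in \ell^1(\mathbb{Z})$, so in particular $F_{\jph}^{\nph} \to 0$ as $j \to \pm\infty$. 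Summing the conservative update~\eqref{eq:mh} over $j \in \mathbb{Z}$, the flux differences telescope and the boundary contributions vanish, yielding
\begin{align*}
\sum_{\jinz}\rho_j^{n+1} = \sum_{\jinz}\rho_j^n - \lambda \sum_{\jinz}\left(F_{\jph}^{\nph} - F_{\jmh}^{\nph}\right) = \sum_{\jinz}\rho_j^n.
\end{align*}
Together with positivity this gives $\sum_{\jinz}\abs{\rho_j^{n+1}} = \sum_{\jinz}\rho_j^{n+1} = \sum_{\jinz}\rho_j^n < \infty$, so $\ell^1$ summability is propagated and the inductive step closes.

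For the base case I would use the definition of $\rho_j^0$ as the cell averages of $\rho_0$: since $\rho_0 \in (\mathrm{L}^1 \cap \mathrm{L}^\infty)(\mathbb{R};\mathbb{R}_+)$,
\begin{align*}
\Delta x \sum_{\jinz}\rho_j^0 = \sum_{\jinz}\int_{x_{\jmh}}^{x_{\jph}}\rho_0(x)\dif x = \int_{\mathbb{R}}\rho_0(x)\dif x = \norm{\rho_0}_{\mathrm{L}^1(\mathbb{R})},
\end{align*}
which also furnishes $\{\rho_j^0\}_{\jinz} \in \ell^1(\mathbb{Z})$ to start the induction. Finally, combining the induction with the fact that $\rho_\Delta(t,\cdot) = \rho_\Delta(t^n,\cdot)$ for $t \in [t^n,t^{n+1})$, as in~\eqref{eq:approxsoln}, yields the claimed equality for all $t>0$. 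The only delicate point is the justification of the telescoping step, namely ensuring that the iterates stay in $\ell^1(\mathbb{Z})$ so that the flux sequence decays at infinity; this is exactly where the Lax-Friedrichs flux bound and Lemma~\ref{lemma:midtimebd} are invoked, and it is the step I expect to demand the most care.
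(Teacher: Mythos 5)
Your proof is correct and follows essentially the same route as the paper's: positivity from Theorem~\ref{theorem:positivity} removes the absolute values, and the conservative form of \eqref{eq:mh} makes the flux sums telescope, so the discrete mass is conserved back to the initial cell averages. The only difference is that you explicitly justify the telescoping by establishing $\ell^1$ summability of the fluxes via \eqref{hyp:H1} and Lemma~\ref{lemma:midtimebd}, a point the paper's proof leaves implicit.
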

\begin{proof}
From the definition of the numerical scheme \eqref{eq:mh} and using the positivity- preserving property (Theorem \ref{theorem:positivity}), for $t \in [t^n, t^{n+1}),$ $n \in \mathbb{N}\cup \{0\},$  we write \begin{nalign}
  \norm{\rho_\Delta(t,\cdot)}_{\mathrm{L}^{1}(\mathbb{R})}=  \norm{\rho_\Delta(t^n,\cdot)}_{\mathrm{L}^{1}(\mathbb{R})}  = \Delta x \sum_{\jinz}\abs{\rho_{j}^n} &= \Delta x \sum_{\jinz}\rho_{j}^n\\
  &=  \Delta x \sum_{\jinz}\rho_{j}^{n-1} -\Delta t\sum_{\jinz} F_{\jph}^{n-\half} + \Delta t\sum_{\jinz} F_{\jmh}^{n-\half}  \\
    & = \Delta x \sum_{\jinz}\rho_{j}^{n-1} = \cdots = \Delta x \sum_{\jinz}\rho_{0}^n = \norm{\rho_{0}}_{\mathrm{L}^{1}(\mathbb{R})}.
\end{nalign}
\end{proof}
\begin{theorem}\label{theorem:Linfty}($\mathrm{L^\infty}$- stability)
    Let the initial datum $\rho_0 \in \mathrm{L}^{\infty}(\mathbb{R};\mathbb{R}_{+}).$ If the CFL condition \eqref{eq:cfl_psty} holds, then there exists a constant $C$  such that the approximate solutions $\rho_\Delta$  computed using the scheme \eqref{eq:mh}  satisfy the $\mathrm{L}^{\infty}-$ estimate  \begin{nalign}\label{eq:Linfbd}
    \norm{\rho_\Delta(t,\cdot)} \leq C,
\end{nalign} for all $t \in [0,T].$  
\end{theorem}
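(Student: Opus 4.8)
The plan is to prove a uniform (in the mesh) bound of the exponential form $\|\rho_\Delta(t,\cdot)\| \leq e^{C_0 T}\|\rho_0\|$; since the nonlocal coupling destroys the strict maximum principle, one cannot expect $\|\rho^{n+1}\| \leq \|\rho^n\|$, only a controlled per-step growth $\|\rho^{n+1}\| \leq (1+C_0\Delta t)\|\rho^n\|$ that closes under a discrete Gronwall iteration over the $T/\Delta t$ time steps. First I would record the uniform control of the convolution data that drives everything: combining the $\mathrm{L}^1$-stability of Theorem~\ref{theorem:L1stability}, the positivity of Theorem~\ref{theorem:positivity}, the mid-time bound of Lemma~\ref{lemma:midtimebd}, and hypothesis~\eqref{hyp:H4} (so that $\mu \in \mathrm{L}^\infty \cap \mathrm{W}^{1,\infty}$), the discrete convolutions $A^{\nph}_{j\pm\frac12}$ in \eqref{eq:midtimeconv} are bounded uniformly by $\|\mu\|\,\|\rho_0\|_{\mathrm{L}^1}$ up to the constant of Lemma~\ref{lemma:midtimebd}, and, crucially, their consecutive spatial increments satisfy $|A^{\nph}_{\jph}-A^{\nph}_{\jmh}| \leq C\Delta x\,\|\mu'\|\,\|\rho_0\|_{\mathrm{L}^1}$. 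This $\mathcal{O}(\Delta x)$ smallness of the convolution increments is the engine of the whole estimate.

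Next I would start from the incremental reformulation \eqref{eq:positivityscheme} already derived in the positivity proof, in which $\rho^{n+1}_j$ appears through the coefficients $a^{\nph}_{\jmh},b^{\nph}_{\jph}$ acting on the density differences $(\rho^n_j-\rho^n_{j-1})$ and $(\rho^n_{j+1}-\rho^n_j)$, plus a flux-difference remainder in which the two numerical fluxes carry \emph{identical} density arguments and differ only through the convolution value $A^{\nph}_{\jmh}$ versus $A^{\nph}_{\jph}$. The idea is to split $\rho^{n+1}_j$ into a local part and a nonlocal source: if the convolution values across the stencil are frozen to a single $A_*$, the update collapses to the local MUSCL--Hancock scheme for the conservation law with frozen flux $f(\cdot,A_*)$, whose monotone/convex structure, namely $\bar a,\bar b\geq 0$ with $\bar a+\bar b<1$ as extracted in \eqref{eq:abar}--\eqref{eq:bbarplus_ub} under the CFL condition \eqref{eq:cfl_psty} and $\theta\in[0,\tfrac12]$, forces the local part to be $\leq \|\rho^n\|$. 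The nonlocal source is then precisely what is left over from replacing $A_*$ by the true, spatially varying convolutions; in each such term the density arguments coincide and only $A$ moves, so by hypothesis~\eqref{hyp:H2} ($|\partial_A f|\leq M|\rho|$), the mean value theorem in $A$, the $\mathcal{O}(\Delta x)$ increment from the first step, and Lemma~\ref{lemma:midtimebd}, every term is bounded by $\lambda\,M\,(C\rho^n_j)\,(C'\Delta x)=C''\Delta t\,\rho^n_j$. Taking the supremum over $j$ gives $\|\rho^{n+1}\|\leq(1+C_0\Delta t)\|\rho^n\|$, and iterating over $n\leq T/\Delta t$ proves \eqref{eq:Linfbd} with $C=e^{C_0 T}\|\rho_0\|$.

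The hard part will be the clean separation of the predictor-induced corrections $c^n_{\jmh},d^n_{\jph}$ (generated by the Taylor/mid-time update \eqref{eq:midtimevalues}) into the local and nonlocal parts. A direct estimate of these corrections, as in \eqref{eq:cbd}, is only $\mathcal{O}(\lambda)=\mathcal{O}(1)$ per step: their second-difference-of-predictor-flux pieces (the reconstruction-slope differences) do not see the $\mathcal{O}(\Delta x)$ smallness of the convolution increments, so bounding them crudely would yield a non-uniform $(1+\mathcal{O}(1))^{T/\Delta t}$ blow-up rather than the required $(1+C_0\Delta t)$ growth. The estimate is salvaged only by absorbing those $\mathcal{O}(\lambda)$ slope pieces back into the maximum-principle-preserving local part, where, being part of the frozen-flux MUSCL--Hancock update, they cause no growth, and isolating the genuine $\mathcal{O}(\Delta x)$ convolution variation as the sole source. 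Making this decomposition rigorous, using the limiter ratio bound \eqref{eq:ratiobd} to keep $\bar a,\bar b$ and the reconstructed-difference ratios under control and the CFL condition \eqref{eq:cfl_psty} to guarantee non-negativity of the frozen-flux coefficients, is the technical crux of the argument.
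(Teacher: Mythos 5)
Your proposal is correct and follows essentially the same route as the paper: the paper's refined decomposition \eqref{eq:a_refor_maxpri}--\eqref{eq:b_refor_maxpri}, built from the mean-value splitting \eqref{eq:lmidvaluedif_ratio}, is precisely your plan of absorbing the $\mathcal{O}(\lambda)$ slope/density pieces into convex-combination coefficients $\tilde{a},\tilde{b}\in[0,\tfrac12]$ (controlled by the limiter ratio bound \eqref{eq:ratiobd} and the CFL condition) while isolating the $\partial_A f$ terms, which inherit the $\mathcal{O}(\Delta x)$ convolution increments \eqref{eq:lrvaluediff_bd} and \eqref{eq:midtimeconvdiff} and thus contribute only $\mathcal{O}(\Delta t)\rho_j^n$ sources, yielding the per-step bound $(1+\tilde{\mathcal{C}}\Delta t)$ and the Gronwall-type conclusion. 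You also correctly diagnosed that the cruder positivity-proof bound \eqref{eq:cbd} on $c^n_{\jmh},d^n_{\jph}$ is insufficient here, which is exactly why the paper introduces $\tilde{c}^n_{\jmh},\tilde{d}^n_{\jph}$ in its $\mathrm{L}^\infty$ argument.
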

\begin{proof}
Recall the formulation \eqref{eq:positivityscheme} of the scheme \eqref{eq:mh}:
\begin{align}\label{eq:linfscheme} 
       \rho^{n+1}_{j} &= \rho_{j}^{n} - a^{\nph}_{j-\frac{1}{2}} (\rho_{j}^{n}-\rho_{j-1}^{n})          + b^{\nph}_{j+\frac{1}{2}}(\rho^{n}_{j+1} - \rho^{n}_{j})\\
   & \hspace{0.5cm}+ \lambda\left[F(\rho_{j+\frac{1}{2}}^{\nph,-}, \rho_{j-\frac{1}{2}}^{\nph, +}, A^{\nph}_{j-\frac{1}{2}}) - 
 F(\rho_{j+\frac{1}{2}}^{\nph,-}, \rho_{j-\frac{1}{2}}^{\nph,+}, A^{\nph}_{j+\frac{1}{2}})\right]. \no  
\end{align}

First, using the definition \eqref{eq:midtimevalues}, adding and subtracting suitable terms and applying the mean  value theorem, we write
\begin{nalign}\label{eq:lmidvaluedif_ratio}
  \frac{\rho_{j+\frac{1}{2}}^{\nph, -}-\rho_{j-\frac{1}{2}}^{\nph,-}}{\rho_{j}^{n}-\rho_{j-1}^{n}  } 
  & = \frac{\rho_{j+\frac{1}{2}}^{n, -}-\rho_{j-\frac{1}{2}}^{n,-}}{\rho_{j}^{n}-\rho_{j-1}^{n}  } \left(1-\frac{\lambda}{2}\partial_{\rho}f(\bar{\rho}_{j}^{n,-}, A_{\jph}^{n,-})\right) \\& \spc + \frac{\rho_{\jmh}^{n,+}-\rho_{\jmtbt}^{n,+}}{\rho_{j}^{n}-\rho_{j-1}^{n}}\left(\frac{\lambda}{2}\partial_{\rho}f(\bar{\rho}_{j-1}^{n,+}, A_{\jmh}^{n,+})\right) \\& \spc  -\frac{\lambda}{2(\rho_{j}^{n}-\rho_{j-1}^{n})}\partial_{A}f(\rho^{n,-}_{\jmh}, \bar{A}^{n,-}_{j})\left(A_{\jph}^{n,-}- A_{\jmh}^{n,-}\right)\\ & \spc + \frac{\lambda}{2(\rho_{j}^{n}-\rho_{j-1}^{n})} \partial_{A}f(\rho^{n,+}_{\jmtbt}, \bar{A}^{n,+}_{j-1})\left(A_{\jmh}^{n,+}- A_{\jmtbt}^{n,+}\right),
\end{nalign}

where $\bar{\rho}_{j}^{n,-} \in \mathcal{I}({\rho}_{\jmh}^{n,-}, {\rho}_{\jph}^{n,-}), \bar{\rho}_{j-1}^{n,+} \in \mathcal{I}({\rho}_{\jmtbt}^{n,+}, \rho_{\jmh}^{n,+}), \bar{A}^{n,-}_{j} \in \mathcal{I}({A}^{n,-}_{\jmh}, {A}^{n,-}_{\jph})$ and $\bar{A}^{n,+}_{j-1} \in \mathcal{I}({A}^{n,+}_{\jmtbt}, {A}^{n,+}_{\jmh}),$ for $\jinz.$
Now using \eqref{eq:lmidvaluedif_ratio}, the term $ a_{j-\frac{1}{2}}^{\nph}$ in \eqref{eq:aandb} can be expressed as
\begin{align}\label{eq:a_refor_maxpri}
     a_{j-\frac{1}{2}}^{\nph} & =\tilde{a}_{j-\frac{1}{2}}^{\nph} + \frac{1}{2} \frac{\tilde{c}_{\jmh}^{n}}{(\rho_{j}^{n}-\rho_{j-1}^{n})}\left(\lambda\partial_{\rho}f(\bar{\rho}_{j}^{\nph,-},A^{\nph}_{\jmh})+ \alpha\right),
\end{align}
where 
\begin{align*}
 \tilde{a}_{j-\frac{1}{2}}^{\nph} &:= \frac{1}{2}\hat{a}_{j-\frac{1}{2}}^{\nph}\left(\lambda\partial_{\rho}f(\bar{\rho}_{j}^{\nph,-},A^{\nph}_{\jmh})+ \alpha\right),
\\
    \tilde{c}_{\jmh}^{n} &\coloneqq  -\frac{\lambda}{2}\left(\partial_{A}f(\rho^{n,-}_{\jmh}, \bar{A}^{n,-}_{j})\left(A_{\jph}^{n,-}- A_{\jmh}^{n,-}\right)- \partial_{A}f(\rho^{n,+}_{\jmtbt}, \bar{A}^{n,+}_{j-1})\left(A_{\jmh}^{n,+}- A_{\jmtbt}^{n,+}\right)\right),\\
    \hat{a}_{j-\frac{1}{2}}^{\nph}&:=  \left(\frac{\rho_{j+\frac{1}{2}}^{n, -}-\rho_{j-\frac{1}{2}}^{n,-}}{\rho_{j}^{n}-\rho_{j-1}^{n}  }\right) \left(1-\frac{\lambda}{2}\partial_{\rho}f(\bar{\rho}_{j}^{n,-}, A_{\jph}^{n,-})\right) + \left(\frac{\rho_{\jmh}^{n,+}-\rho_{\jmtbt}^{n,+}}{\rho_{j}^{n}-\rho_{j-1}^{n}}\right) \frac{\lambda}{2}\partial_{\rho}f(\bar{\rho}_{j-1}^{n,+}, A_{\jmh}^{n,+}). 
\end{align*}
Next, we show that the terms $\tilde{a}_{j-\frac{1}{2}}^{\nph}$ satisfy
\begin{align}\label{eq:tildea-bd}
     \quad 0 \leq \tilde{a}_{j-\frac{1}{2}}^{\nph} \leq \frac{1}{2}, \quad \mbox{for all} \,  \jinz.
\end{align}
First, using the estimate \eqref{eq:ratiobd}, the CFL condition \eqref{eq:cfl_psty} and the assumption that $\theta \in [0,0.5],$ we have 
\begin{nalign}\label{eq:atildelb}
     \hat{a}_{j-\frac{1}{2}}^{\nph} &  \geq \frac{1}{2} \left(1-\frac{\lambda}{2}\norm{\partial_{\rho}f}\right) - \frac{\lambda}{2}\norm{\partial_{\rho}f}(1+\theta)\\
   & = \frac{1}{2} - \lambda\norm{\partial_\rho f} \left(\frac{1}{4}+ \frac{1+\theta}{2}\right) \geq 0. 
\end{nalign}
The estimate \eqref{eq:atildelb} together with the CFL condition \eqref{eq:cfl_psty} yields
$\tilde{a}_{j-\frac{1}{2}}^{\nph} \geq 0.$
Further, using \eqref{eq:ratiobd} and the CFL condition \eqref{eq:cfl_psty}, we derive an upper bound
\begin{nalign}\label{eq:tildea_bd}
    \tilde{a}_{j-\frac{1}{2}}^{\nph} &\leq \frac{1}{2}(1+\theta) \left(1+{\lambda}\norm{\partial_{\rho}f}\right)\left(\lambda\norm{\partial_{\rho}f}+ \alpha\right)\\& \leq \frac{1}{2} \times\frac{3}{2} \times \frac{29}{27} \times \frac{8}{27}     \leq \frac{1}{2},
\end{nalign} thereby verifying \eqref{eq:tildea_bd}.
Next, Theorems \ref{theorem:positivity} and \ref{theorem:L1stability} imply that
\begin{nalign}
    \abs{A_{j}^n- A_{j-1}^n} &\leq \Delta x \sum_{l \in \mathbb{Z}} \abs{\mu_{j-l}- \mu_{j-1-l}}\rho_{l}^n \leq \Delta x \norm{\mu^{\prime}}\norm{\rho_{\Delta}(t^{n}, \cdot)}_{\mathrm{L}^{1}(\mathbb{R})} \leq \Delta x \norm{\mu^{\prime}}\norm{\rho_{0}}_{\mathrm{L}^{1}(\mathbb{R})}. 
\end{nalign}
Hence, from \eqref{eq:lrconv}, we obtain
\begin{nalign}
    \abs{s_{j}^n- s_{j-1}^n} &\leq 2\theta \Delta x \norm{\mu^{\prime}}\norm{\rho_{0}}_{\mathrm{L}^{1}(\mathbb{R})}. 
\end{nalign}
As a result, we arrive at
\begin{nalign}\label{eq:lrvaluediff_bd}
    \abs{A_{\jmh}^{n,+}- A_{\jmtbt}^{n,+}}, \abs{A_{\jph}^{n,-}- A_{\jmh}^{n,-}} &\leq \abs{A_{j}^n- A_{j-1}^n}+ \frac{1}{2}\abs{s_{j}^n- s_{j-1}^n} \\&\leq (1+\theta)\Delta x \norm{\mu^{\prime}}\norm{\rho_{0}}_{\mathrm{L}^{1}(\mathbb{R})},
\end{nalign} which together with hypothesis \eqref{hyp:H2} and the estimate \eqref{eq:recon_bda} yields 
\begin{align}\label{eq:tildec_bd}
    \abs{ \tilde{c}_{\jmh}^{n}} &\leq \lambda M(1+\theta)^{2}\Delta x \norm{\mu^{\prime}}\norm{\rho_{0}}_{\mathrm{L}^{1}(\mathbb{R})} \rho_{j-1}^n.
\end{align}

\cblue{In an analogous way, the term $b_{j+\frac{1}{2}}^{\nph}$ in \eqref{eq:aandb} can be expressed as 
\begin{nalign}\label{eq:b_refor_maxpri}
     b_{j+\frac{1}{2}}^{\nph} & =\tilde{b}_{j+\frac{1}{2}}^{\nph}+ \frac{1}{2} \frac{\tilde{d}_{\jph}^{n}}{(\rho_{j+1}^{n}-\rho_{j}^{n})}\left(\alpha-\lambda\partial_{\rho}f(\bar{\rho}_{j}^{\nph,+},A^{\nph}_{\jph})\right),
\end{nalign}
where 
\begin{nalign}
 \tilde{b}_{j+\frac{1}{2}}^{\nph}&:= \frac{1}{2}\hat{b}_{\jph}^{\nph}\left(\alpha-\lambda\partial_{\rho}f(\bar{\rho}_{j}^{\nph,+},A^{\nph}_{\jph})\right),
    \\ \tilde{d}_{\jph}^{n} &\coloneqq -\frac{\lambda}{2}\left(\partial_{A}f(\rho^{n,-}_{\jph}, \bar{A}^{n,-}_{j+1})\left(A_{j+\frac{3}{2}}^{n,-}- A_{\jph}^{n,-}\right)- \partial_{A}f(\rho^{n,+}_{\jmh}, \bar{A}^{n,+}_{j})\left(A_{\jph}^{n,+}- A_{\jmh}^{n,+}\right)\right),\\
    \hat{b}_{\jph}^{\nph} &:= \left(\frac{\rho_{j+\frac{1}{2}}^{n, +}-\rho_{j-\frac{1}{2}}^{n,+}}{\rho_{j+1}^{n}-\rho_{j}^{n}  } \right)(1+\frac{\lambda}{2}\partial_{\rho}f(\bar{\rho}_{j}^{n,+}, A_{\jph}^{n,+}))-\left(\frac{\rho_{j+\frac{3}{2}}^{n, -}-\rho_{j+\frac{1}{2}}^{n,-}}{\rho_{j+1}^{n}-\rho_{j}^{n}} \right) \frac{\lambda}{2}\partial_{\rho}f(\bar{\rho}_{j+1}^{n,-}, A_{j+\frac{3}{2}}^{n,-}),
\end{nalign}
where $\bar{\rho}_{j}^{n,-} \in \mathcal{I}({\rho}_{\jmh}^{n,-}, {\rho}_{\jph}^{n,-}), \bar{\rho}_{j-1}^{n,+} \in \mathcal{I}({\rho}_{\jmtbt}^{n,+}, \rho_{\jmh}^{n,+}), \bar{A}^{n,-}_{j} \in \mathcal{I}({A}^{n,-}_{\jmh}, {A}^{n,-}_{\jph})$ and $\bar{A}^{n,+}_{j-1} \in \mathcal{I}({A}^{n,+}_{\jmtbt}, {A}^{n,+}_{\jmh}),$ for $\jinz.$}
 Further, we obtain the following bounds:   
\begin{align}\label{eq:tildeb_bd}
 \quad 0 \leq \tilde{b}_{j+\frac{1}{2}}^{\nph} \leq \frac{1}{2},\end{align} and 
\begin{align}\label{eq:tilded_bd}
     \abs{ \tilde{d}_{\jph}^{n}} &\leq \lambda M(1+\theta)^{2}\Delta x \norm{\mu^{\prime}}\norm{\rho_{0}}_{\mathrm{L}^{1}(\mathbb{R})} \rho_{j}^n.
\end{align}
Using \eqref{eq:a_refor_maxpri} and \eqref{eq:b_refor_maxpri}, the scheme \eqref{eq:linfscheme} now reads as
\begin{align}\label{eq:mhstability}
      \rho_{j}^{n+1} 
 &= \rho_{j}^{n}(1-\tilde{a}^{\nph}_{j-\frac{1}{2}}-\tilde{b}^{\nph}_{j+\frac{1}{2}}) + \tilde{a}^{\nph}_{j-\frac{1}{2}} \rho_{j-1}^{n}          + \tilde{b}^{\nph}_{j+\frac{1}{2}}\rho^{n}_{j+1}\\& \spc - \frac{1}{2} \tilde{c}_{\jmh}^{n}\left(\lambda\partial_{\rho}f(\bar{\rho}_{j}^{\nph,-},A^{\nph}_{\jmh})+ \alpha\right)  + \frac{1}{2} \tilde{d}_{\jph}^{n}\left(\alpha-\lambda\partial_{\rho}f(\bar{\rho}_{j}^{\nph,+},A^{\nph}_{\jph})\right)\no\\& \hspace{0.5cm}+ \lambda\left[F(\rho_{j+\frac{1}{2}}^{\nph,-}, \rho_{j-\frac{1}{2}}^{\nph, +}, A^{\nph}_{j-\frac{1}{2}}) 
 -F(\rho_{j+\frac{1}{2}}^{\nph,-}, \rho_{j-\frac{1}{2}}^{\nph,+}, A^{\nph}_{j+\frac{1}{2}})\right]. \no  
\end{align}
Lemma \ref{lemma:midtimebd} (equation \eqref{eq:recon_bd}) and Theorems \ref{theorem:positivity} and \ref{theorem:L1stability} together yield \begin{nalign}\label{eq:midtimeconvdiff}
    \abs{A^{\nph}_{\jmh}-A^{\nph}_{\jph}} &\leq \frac{\Delta x}{2}\sum_{l \in \mathbb{Z}}\abs{\mu_{j+1-l}-\mu_{j-l}}\abs{\rho^{\nph,+}_{l-\half}} +  \frac{\Delta x}{2}\sum_{l \in \mathbb{Z}}\abs{\mu_{j-l}-\mu_{j-l-1}}\abs{\rho^{\nph,-}_{l+\half}}\\
    & \leq \frac{\Delta x^2}{2}\norm{\mu^{\prime}}\sum_{l \in \mathbb{Z}}(\abs{\rho^{\nph,+}_{l-\half}} +\abs{\rho^{\nph,-}_{l+\half}}) \\
    & \leq {\Delta x}\norm{\mu^{\prime}}(1+\theta)  (1+ {\lambda}\norm{\partial_{\rho}f})\norm{\rho_{0}}_{\mathrm{L}^{1}(\mathbb{R})}.
\end{nalign}
Furthermore, using the mean value theorem,  hypothesis \eqref{hyp:H2} and the estimates \eqref{eq:midtimeconvdiff}  and  \eqref{eq:recon_bd}, we obtain
\begin{nalign}\label{eq:Fdif_bound}
    &\lambda\abs[\big]{F(\rho_{j+\frac{1}{2}}^{\nph,-}, \rho_{j-\frac{1}{2}}^{\nph, +}, A^{\nph}_{j-\frac{1}{2}}) 
 -F(\rho_{j+\frac{1}{2}}^{\nph,-}, \rho_{j-\frac{1}{2}}^{\nph,+}, A^{\nph}_{j+\frac{1}{2}})} \\&\spc = \frac{\lambda}{2}\abs[\bigg]{\left(\partial_{A}f(\rho_{\jph}^{\nph,-},\bar{A}_{j}^{\nph})+\partial_{A}f(\rho_{\jmh}^{\nph,+},\hat{A}_{j}^{\nph})\right) (A^{\nph}_{\jmh}-A^{\nph}_{\jph})}\\ & \spc \leq \frac{\lambda}{2}M\left(\abs{\rho_{\jph}^{\nph,-}}+ \abs{\rho_{\jmh}^{\nph,+}}\right) \abs{A^{\nph}_{\jmh}-A^{\nph}_{\jph}}\\
 & \spc \leq \Delta t M (1+\theta)^{2}(1+\lambda \norm{\partial_{\rho}f})^{2}\norm{\mu^{\prime}}\norm{\rho_0}_{\mathrm{L}^1(\mathbb{R})}\norm{\rho_{\Delta}(t^n, \cdot)}.
\end{nalign}
Finally, combining the estimates \eqref{eq:tildea-bd}, \eqref{eq:tildeb_bd}, \eqref{eq:tildec_bd}, \eqref{eq:tilded_bd} and \eqref{eq:Fdif_bound} in \eqref{eq:mhstability}, we arrive at
\begin{nalign}
    \abs{\rho_{j}^{n+1}} &\leq \norm{\rho_{\Delta}(t^{n}, \cdot)}+ \Delta t(\alpha+\lambda\norm{\partial_{\rho}f}) M(1+\theta)^{2} \norm{\mu^{\prime}}\norm{\rho_{0}}_{\mathrm{L}^{1}(\mathbb{R})} \norm{\rho_{\Delta}(t^{n}, \cdot)}\\
    & \spc + \Delta t M (1+\theta)^{2}(1+\lambda\norm{\partial_{\rho}f})^{2}\norm{\mu^\prime}\norm{\rho_{0}}_{\mathrm{L}^1(\mathbb{R})}\norm{\rho_{\Delta}(t^n, \cdot)}\\
    & \leq \norm{\rho_{\Delta}(t^n, \cdot)}(1+\tilde{\mathcal{C}}\Delta t) \\
    & \leq (1+\tilde{\mathcal{C}}\Delta t)^{n+1}\norm{\rho_0} \leq \exp(\tilde{\mathcal{C}}(n+1)\Delta t)\norm{\rho_0},
\end{nalign} where $\tilde{\mathcal{C}}:=\left[\alpha+\lambda \norm{\partial_{\rho}f}+ (1+\lambda\norm{\partial_{\rho}f})^2 \right]M(1+\theta)^{2}\norm{\mu^{\prime}}\norm{\rho_{0}}_{\mathrm{L}^{1}(\mathbb{R})}.$
Now, choosing $ C := \exp(\tilde{\mathcal{C}}T)\norm{\rho_0},$ the result \eqref{eq:Linfbd} follows.
\end{proof}

\begin{theorem}\label{theorem:TVbound}(Total variation estimate)
If the initial datum $\rho_0 \in \mathrm{L}^{\infty} \cap \mathrm{BV}(\mathbb{R}; \mathbb{R}_{+})$ and the CFL condition \eqref{eq:cfl_psty} holds, then  the approximate solutions $\rho_{\Delta}$ computed using the scheme \eqref{eq:mh} satisfy
\begin{nalign}\label{eq:tvbestimate}
\mathrm{TV}(\rho_{\Delta}(t,\cdot)) 
    &\leq \exp(\mathcal{A}T)\left(  \sum_{\jinz}\abs{\rho^{0}_{j+1} -\rho^{0}_{j}} \right) + \mathcal{B}(\exp(\mathcal{A}T) - 1),
\end{nalign} for all $t \in [0,T]$ and for some constants $\mathcal{A}, \mathcal{B} > 0.$
\end{theorem}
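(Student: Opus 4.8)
The plan is to build on the convex-combination reformulation \eqref{eq:mhstability} of the scheme obtained in the proof of Theorem \ref{theorem:Linfty}, recast it in Harten-type incremental form, and estimate the summed spatial differences $\sum_\jinz\abs{\rho^{n+1}_{j+1}-\rho^{n+1}_{j}}$. Writing $\Delta^n_\jph:=\rho^n_{j+1}-\rho^n_{j}$ and collecting the advective part of \eqref{eq:mhstability}, the update reads $\rho^{n+1}_{j}=\rho^n_{j}-\tilde a^\nph_\jmh\Delta^n_\jmh+\tilde b^\nph_\jph\Delta^n_\jph+S^n_j$, where $S^n_j$ gathers the three remaining source-type contributions: the $\tilde c^n_\jmh$ and $\tilde d^n_\jph$ terms together with the convolution-flux difference $\lambda[F(\rho^{\nph,-}_\jph,\rho^{\nph,+}_\jmh,A^\nph_\jmh)-F(\rho^{\nph,-}_\jph,\rho^{\nph,+}_\jmh,A^\nph_\jph)]$. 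Subtracting the update at index $j$ from that at $j+1$ gives
\begin{nalign}\label{eq:tvincrement}
\rho^{n+1}_{j+1}-\rho^{n+1}_{j}&=\Delta^n_\jph\left(1-\tilde a^\nph_\jph-\tilde b^\nph_\jph\right)+\tilde a^\nph_\jmh\Delta^n_\jmh+\tilde b^\nph_{j+\frac{3}{2}}\Delta^n_{j+\frac{3}{2}}+\left(S^n_{j+1}-S^n_j\right).
\end{nalign}

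First I would dispose of the advective part. By \eqref{eq:tildea-bd} and \eqref{eq:tildeb_bd} we have $0\leq\tilde a^\nph_\jph,\tilde b^\nph_\jph\leq\tfrac{1}{2}$, so the coefficient $1-\tilde a^\nph_\jph-\tilde b^\nph_\jph$ is non-negative. Taking absolute values in \eqref{eq:tvincrement}, summing over $\jinz$, and shifting the summation index in the two lagged sums (sending $j\mapsto j+1$ in the $\tilde a$ sum and $j\mapsto j-1$ in the $\tilde b$ sum) collapses the first three contributions to exactly $\sum_\jinz\abs{\Delta^n_\jph}=\mathrm{TV}(\rho_\Delta(t^n,\cdot))$, whence
\begin{nalign}\label{eq:tvmaster}
\mathrm{TV}(\rho_\Delta(t^{n+1},\cdot))&\leq\mathrm{TV}(\rho_\Delta(t^n,\cdot))+\sum_\jinz\abs{S^n_{j+1}-S^n_j}.
\end{nalign}

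The core of the argument is to bound the source variation $\sum_\jinz\abs{S^n_{j+1}-S^n_j}$ by $\mathcal A\Delta t\,\mathrm{TV}(\rho_\Delta(t^n,\cdot))+\mathcal B\Delta t$. I would treat the three pieces of $S^n_j$ separately, expanding each difference and repeatedly invoking the mean value theorem on $f$. The $\tilde c^n_\jmh$ and $\tilde d^n_\jph$ contributions carry a factor $\partial_A f$, controlled by $M\abs{\rho}$ via hypothesis \eqref{hyp:H2}, multiplying first differences of the approximate convolutions; combining \eqref{eq:lrvaluediff_bd}, the reconstruction bounds of Lemmas \ref{lemma:reconvaluebd} and \ref{lemma:midtimebd}, and the $\mathrm L^1$ and $\mathrm L^\infty$ stability of Theorems \ref{theorem:L1stability} and \ref{theorem:Linfty}, their variation splits into a part proportional to $\mathrm{TV}(\rho_\Delta(t^n,\cdot))$ and an $\mathrm L^1$-bounded remainder. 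The convolution-flux difference, already controlled pointwise in \eqref{eq:Fdif_bound}, must now be differenced once more in $j$, which produces second-order differences of the mid-time convolutions $A^\nph_{j\pm\frac{1}{2}}$; their summability hinges on the $\mathrm W^{2,\infty}$ regularity of $\mu$ in \eqref{hyp:H4} and again separates into a $\mathrm{TV}$-proportional term and a constant governed by $\norm{\mu^\prime}$, $\norm{\mu^{\prime\prime}}$ and $\norm{\rho_0}_{\mathrm L^1(\mathbb R)}$. This bookkeeping is the main obstacle: the non-local flux precludes a genuine TVD bound, and one must verify that the forcing it generates contributes only a mesh-independent constant $\mathcal B$ rather than a term capable of amplifying the total variation uncontrollably across time steps.

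Finally, inserting this estimate into \eqref{eq:tvmaster} yields the one-step recursion $\mathrm{TV}(\rho_\Delta(t^{n+1},\cdot))\leq(1+\mathcal A\Delta t)\,\mathrm{TV}(\rho_\Delta(t^n,\cdot))+\mathcal B\Delta t$. A discrete Gronwall iteration over the time steps, using $1+\mathcal A\Delta t\leq\exp(\mathcal A\Delta t)$ and $n\Delta t\leq T$, then gives $\mathrm{TV}(\rho_\Delta(t^n,\cdot))\leq\exp(\mathcal A T)\sum_\jinz\abs{\rho^0_{j+1}-\rho^0_j}+\mathcal B(\exp(\mathcal A T)-1)$, which is exactly \eqref{eq:tvbestimate} since $\rho_\Delta$ is piecewise constant in time.
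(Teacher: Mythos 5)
Your proposal is correct and follows essentially the same route as the paper: the paper likewise splits the differenced scheme into a Harten-type incremental part with coefficients in $[0,\tfrac{1}{2}]$ (its $\tilde{C}_{\jph}^{n}$, built from $\tilde{k}_{\jph}^n,\tilde{\ell}_{\jph}^n$, the analogues of your $\tilde a,\tilde b$) plus non-local source differences (its $\hat{C}_{\jph}^{n}$ and $\lambda D_{\jph}^n$, which coincide with your $S^n_{j+1}-S^n_j$), bounds the latter by $O(\Delta t)(1+\mathrm{TV})$ using exactly the mechanism you identify — second differences of the discrete convolutions controlled by $\norm{\mu^{\prime\prime}}$, with the remainder split into a TV-proportional piece and an $\mathrm{L}^1$-controlled constant — and concludes with the same discrete Gronwall iteration. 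The bookkeeping you defer is the content of the paper's Appendix \ref{section:tvbappendix}, organized just as you describe.
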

\begin{proof}
From the scheme \eqref{eq:mh}, computing the difference $\rho^{n+1}_{j+1} -\rho^{n+1}_{j}$ and subsequently adding and subtracting suitable terms, we obtain
\begin{align}\label{eq:bvdiff}
    \rho^{n+1}_{j+1} -\rho^{n+1}_{j} 
&=C_{\jph}^n-\lambda D_{\jph}^n,
\end{align}
where
\begin{nalign}\label{eq:Cjphdefn}
C_{\jph}^n&:=(\rho^{n}_{j+1} -\rho^{n}_{j})-\lambda \left(F(\rho_{j+\frac{3}{2}}^{\nph,-}, \rho_{j+\frac{3}{2}}^{\nph, +}, A^{\nph}_{j+\frac{3}{2}}) - F(\rho_{j+\frac{1}{2}}^{\nph,-}, \rho_{j+\frac{1}{2}}^{\nph,+}, A^{\nph}_{j+\frac{3}{2}})\right) \\      
&\spc+\lambda \left(F(\rho_{j+\frac{1}{2}}^{\nph,-}, \rho_{j+\frac{1}{2}}^{\nph, +}, A^{\nph}_{j+\frac{1}{2}}) - F(\rho_{j-\frac{1}{2}}^{\nph,-}, \rho_{j-\frac{1}{2}}^{\nph,+}, A^{\nph}_{j+\frac{1}{2}}) \right), 
\end{nalign}
and 
\begin{nalign}\label{eq:D_defn}
D_{\jph}^n &:= \left( F(\rho_{j+\frac{1}{2}}^{\nph,-}, \rho_{j+\frac{1}{2}}^{\nph,+}, A^{\nph}_{j+\frac{3}{2}})-F(\rho_{j+\frac{1}{2}}^{\nph,-}, \rho_{j+\frac{1}{2}}^{\nph, +}, A^{\nph}_{j+\frac{1}{2}})\right) \\
&\spc - \left( F(\rho_{j-\frac{1}{2}}^{\nph,-}, \rho_{j-\frac{1}{2}}^{\nph,+}, A^{\nph}_{j+\frac{1}{2}})-F(\rho_{j-\frac{1}{2}}^{\nph,-}, \rho_{j-\frac{1}{2}}^{\nph, +}, A^{\nph}_{j-\frac{1}{2}})\right).
\end{nalign}
Now, we expand the term $C_{\jph}^{n}$ in \eqref{eq:Cjphdefn} by adding and subtracting appropriate terms as follows
\begin{nalign}\label{eq:Cnjph}
    C_{\jph}^{n} &:= (\rho^{n}_{j+1} -\rho^{n}_{j})&\\
&\hspace{.5cm}-\lambda \left(\frac{F(\rho_{j+\frac{3}{2}}^{\nph,-}, \rho_{j+\frac{3}{2}}^{\nph, +}, A^{\nph}_{j+\frac{3}{2}}) - F(\rho_{j+\frac{3}{2}}^{\nph,-}, \rho_{j+\frac{1}{2}}^{\nph, +}, A^{\nph}_{j+\frac{3}{2}})}{\rho_{j+\frac{3}{2}}^{\nph,+}-\rho_{j+\frac{1}{2}}^{\nph,+}} \right) \left( \frac{   \rho_{j+\frac{3}{2}}^{\nph,+}-\rho_{j+\frac{1}{2}}^{\nph,+}} {\rho_{j+2}^{n}-\rho_{j+1}^{n}} \right) (\rho_{j+2}^{n}-\rho_{j+1}^{n}) \\      
&\hspace{.5cm}-\lambda \left(\frac{F(\rho_{j+\frac{3}{2}}^{\nph,-}, \rho_{j+\frac{1}{2}}^{\nph, +}, A^{\nph}_{j+\frac{3}{2}}) - F(\rho_{j+\frac{1}{2}}^{n,-}, \rho_{j+\frac{1}{2}}^{\nph,+}, A^{\nph}_{j+\frac{3}{2}})}{ \rho_{j+\frac{3}{2}}^{\nph,-}-  \rho_{j+\frac{1}{2}}^{\nph,-}} \right) 
 \left( \frac{   \rho_{j+\frac{3}{2}}^{\nph,-}-\rho_{j+\frac{1}{2}}^{\nph,-}} {\rho_{j+1}^{n}-\rho_{j}^{n}} \right) (\rho_{j+1}^{n}-\rho_{j}^{n}) \\    
&\hspace{.5cm}+\lambda \left(\frac{F(\rho_{j+\frac{1}{2}}^{\nph,-}, \rho_{j+\frac{1}{2}}^{\nph, +}, A^{\nph}_{j+\frac{1}{2}}) - F(\rho_{j+\frac{1}{2}}^{\nph,-}, \rho_{j-\frac{1}{2}}^{\nph,+}, A^{\nph}_{j+\frac{1}{2}})}{\rho_{j+\frac{1}{2}}^{\nph,+}- \rho_{j-\frac{1}{2}}^{\nph,+}} \right) 
 \left( \frac{   \rho_{j+\frac{1}{2}}^{\nph,+}-\rho_{j-\frac{1}{2}}^{\nph,+}} {\rho_{j+1}^{n}-\rho_{j}^{n}} \right) (\rho_{j+1}^{n}-\rho_{j}^{n}) \\    
&\hspace{.5cm}+\lambda \left(\frac{F(\rho_{j+\frac{1}{2}}^{\nph,-}, \rho_{j-\frac{1}{2}}^{\nph, +}, A^{\nph}_{j+\frac{1}{2}}) - F(\rho_{j-\frac{1}{2}}^{\nph,-}, \rho_{j-\frac{1}{2}}^{\nph,+}, A^{\nph}_{j+\frac{1}{2}})}{ \rho_{j+\frac{1}{2}}^{\nph,-}-  \rho_{j-\frac{1}{2}}^{\nph,-}} \right) 
 \left( \frac{\rho_{j+\frac{1}{2}}^{\nph,-}-\rho_{j-\frac{1}{2}}^{\nph,-}} {\rho_{j}^{n}-\rho_{j-1}^{n}} \right) (\rho_{j}^{n}-\rho_{j-1}^{n}) \\    
&=(1-\ell_{j+\frac{1}{2}}^n-{k}_{j+\frac{1}{2}}^n)(\rho_{j+1}^{n}-\rho_{j}^{n}) +
{\ell}_{j+\frac{3}{2}}^n(\rho_{j+2}^{n}-\rho_{j+1}^{n}) +{k}_{j-\frac{1}{2}}^n (\rho_{j}^{n}-\rho_{j-1}^{n}), 
\end{nalign}
where \begin{nalign}\label{eq:kandl}
    k_{\jph}^n &:= \lambda \left(\frac{F(\rho_{j+\frac{3}{2}}^{\nph,-}, \rho_{j+\frac{1}{2}}^{\nph, +}, A^{\nph}_{j+\frac{3}{2}}) - F(\rho_{j+\frac{1}{2}}^{\nph,-}, \rho_{j+\frac{1}{2}}^{\nph,+}, A^{\nph}_{j+\frac{3}{2}})}{ \rho_{j+\frac{3}{2}}^{\nph,-}-  \rho_{j+\frac{1}{2}}^{\nph,-}} \right) 
 \left(\frac{\rho_{j+\frac{3}{2}}^{\nph,-}-\rho_{j+\frac{1}{2}}^{\nph,-}} {\rho_{j+1}^{n}-\rho_{j}^{n}} \right),\\ 
 \ell_{\jph}^n &:= -\lambda \left(\frac{F(\rho_{j+\frac{1}{2}}^{\nph,-}, \rho_{j+\frac{1}{2}}^{\nph, +}, A^{\nph}_{j+\frac{1}{2}}) - F(\rho_{j+\frac{1}{2}}^{\nph,-}, \rho_{j-\frac{1}{2}}^{\nph,+}, A^{\nph}_{j+\frac{1}{2}})}{\rho_{j+\frac{1}{2}}^{\nph,+}- \rho_{j-\frac{1}{2}}^{\nph,+}} \right) 
 \left( \frac{   \rho_{j+\frac{1}{2}}^{\nph,+}-\rho_{j-\frac{1}{2}}^{\nph,+}} {\rho_{j+1}^{n}-\rho_{j}^{n}} \right). 
\end{nalign}
Starting from \eqref{eq:midtimevalues}, we consider the difference $\rho_{j+\frac{3}{2}}^{\nph,-}-\rho_{j+\frac{1}{2}}^{\nph,-}.$ Then we rearrange the terms, add and subtract suitable terms and finally apply the mean value theorem to obtain
\begin{nalign}\label{eq:leftmidvldif}
    \rho_{j+\frac{3}{2}}^{\nph,-}-\rho_{j+\frac{1}{2}}^{\nph,-} &= \left(\rho_{j+\frac{3}{2}}^{n,-}-\rho_{j+\frac{1}{2}}^{n,-}\right) - \frac{\lambda}{2}\left(f(\rho_{j+\frac{3}{2}}^{n,-}, A_{j+\frac{3}{2}}^{n,-})-f(\rho_{\jph}^{n,-}, A_{\jph}^{n,-})\right)\\
    & \spc + \frac{\lambda}{2}\left(f(\rho_{j+\frac{1}{2}}^{n,+}, A_{j+\frac{1}{2}}^{n,+})-f(\rho_{\jmh}^{n,+}, A_{\jmh}^{n,+})\right)\\
    & = \left(\rho_{j+\frac{3}{2}}^{n,-}-\rho_{j+\frac{1}{2}}^{n,-}\right)\left(1-\frac{\lambda}{2}\partial_{\rho}f(\bar{\rho}_{j+1}^{n,-}, A_{j+\frac{3}{2}}^{n,-})\right) \\
    & \spc + \left(\rho_{\jph}^{n,+}- \rho_{\jmh}^{n,+}\right)\left(\frac{\lambda}{2}\partial_{\rho}f(\bar{\rho}^{n,+}_{j}, A_{\jph}^{n,+})\right)\\ & \spc - \frac{\lambda}{2}\partial_{A}f(\rho_{\jph}^{n,-}, \bar{A}_{j+1}^{n,-})  \left(A_{j+\frac{3}{2}}^{n,-}-A_{\jph}^{n,-}\right) + \frac{\lambda}{2}\partial_{A}f(\rho_{\jmh}^{n,+}, \bar{A}_{j}^{n,+})  \left(A_{\jph}^{n,+}-A_{\jmh}^{n,+}\right),
\end{nalign}
where $\bar{\rho}_{j}^{n,-} \in \mathcal{I}({\rho}_{\jmh}^{n,-},$ ${\rho}_{\jph}^{n,-}), \bar{\rho}_{j-1}^{n,+} \in \mathcal{I}({\rho}_{\jmtbt}^{n,+},$ $\rho_{\jmh}^{n,+}), \bar{A}^{n,-}_{j} \in \mathcal{I}({A}^{n,-}_{\jmh}, {A}^{n,-}_{\jph})$ and $\bar{A}^{n,+}_{j-1} \in \mathcal{I}({A}^{n,+}_{\jmtbt}, {A}^{n,+}_{\jmh}),$ for $\jinz.$
Noting that 
\begin{align*}
    \lambda \left(\frac{F(\rho_{j+\frac{3}{2}}^{\nph,-}, \rho_{j+\frac{1}{2}}^{\nph, +}, A^{\nph}_{j+\frac{3}{2}}) - F(\rho_{j+\frac{1}{2}}^{\nph,-}, \rho_{j+\frac{1}{2}}^{\nph,+}, A^{\nph}_{j+\frac{3}{2}})}{ \rho_{j+\frac{3}{2}}^{\nph,-}-  \rho_{j+\frac{1}{2}}^{\nph,-}} \right) &= \frac{1}{2}\left(\lambda\partial_{\rho}f(\bar{\rho}_{\jpo}^{\nph,-},A_{j+\frac{3}{2}}^{\nph})+\alpha\right),
\end{align*}
where $\bar{\rho}_{\jpo}^{\nph,-} \in \mathcal{I}({\rho}_{j+\frac{1}{2}}^{\nph,-},{\rho}_{j+\frac{3}{2}}^{\nph,-})$
and in view of \eqref{eq:leftmidvldif}, we rewrite the term $k_{\jph}^n$ in \eqref{eq:kandl} as
\begin{nalign}\label{eq:k_rw}
    k_{\jph}^n &= \tilde{k}_{\jph}^n+ \frac{ \hat{k}_{\jph}^n}{\rho_{j+1}^{n}-\rho_{j}^{n}}, 
\end{nalign}
where 
\begin{nalign}\label{eq:tiildehatkdefn}
 \tilde{k}_{\jph}^n &:= \frac{1}{2}\left(\lambda\partial_{\rho}f(\bar{\rho}_{\jpo}^{\nph,-},A_{j+\frac{3}{2}}^{\nph})+\alpha\right)
 \left(\left(\frac{\rho_{j+\frac{3}{2}}^{n,-}-\rho_{j+\frac{1}{2}}^{n,-}}{\rho_{j+1}^{n}-\rho_{j}^{n}}\right)\left(1-\frac{\lambda}{2}\partial_{\rho}f(\bar{\rho}_{j+1}^{n,-}, A_{j+\frac{3}{2}}^{n,-})\right)\right.\\ & \left.\spc \spc  + \left(\frac{\rho_{\jph}^{n,+}- \rho_{\jmh}^{n,+}}{\rho_{j+1}^{n}-\rho_{j}^{n}}\right)\left(\frac{\lambda}{2}\partial_{\rho}f(\bar{\rho}^{n,+}_{j}, A_{\jph}^{n,+})\right)\right), 
\\
    \hat{k}_{\jph}^n &:= \frac{1}{2}\left(\lambda\partial_{\rho}f(\bar{\rho}_{\jpo}^{\nph,-},A_{j+\frac{3}{2}}^{\nph})+\alpha\right) \left(- \frac{\lambda}{2}\partial_{A}f(\rho_{\jph}^{n,-}, \bar{A}_{j+1}^{n,-})  \left(A_{j+\frac{3}{2}}^{n,-}-A_{\jph}^{n,-}\right) \right.\\ & \spc \spc \left. + \frac{\lambda}{2}\partial_{A}f(\rho_{\jmh}^{n,+}, \bar{A}_{j}^{n,+})  \left(A_{\jph}^{n,+}-A_{\jmh}^{n,+}\right)\right).
\end{nalign}
Analogously, we can write $\ell_{\jph}^n$  \eqref{eq:kandl} as \begin{nalign}\label{eq:l_rw}
    \ell_{\jph}^n &= \tilde{\ell}_{\jph}^n+ \frac{ \hat{\ell}_{\jph}^n}{\rho_{j+1}^{n}-\rho_{j}^{n}},
\end{nalign}
where 
\begin{nalign}\label{eq:ltildehat}
    \tilde{\ell}_{\jph}^n &:= \frac{1}{2}\left(\alpha-\lambda\partial_{\rho}f(\bar{\rho}_{j}^{\nph,+},A_{j+\frac{1}{2}}^{\nph})\right)
 \left(\left(\frac{\rho_{j+\frac{3}{2}}^{n,-}-\rho_{j+\frac{1}{2}}^{n,-}}{\rho_{j+1}^{n}-\rho_{j}^{n}}\right)\left(-\frac{\lambda}{2}\partial_{\rho}f(\bar{\rho}_{j+1}^{n,-}, A_{j+\frac{3}{2}}^{n,-})\right)\right.\\ & \left.\spc \spc  + \left(\frac{\rho_{\jph}^{n,+}- \rho_{\jmh}^{n,+}}{\rho_{j+1}^{n}-\rho_{j}^{n}}\right)\left(1+\frac{\lambda}{2}\partial_{\rho}f(\bar{\rho}^{n,+}_{j}, A_{\jph}^{n,+})\right)\right), 
\\
    \hat{\ell}_{\jph}^n &:= \frac{1}{2}\left(\alpha-\lambda\partial_{\rho}f(\bar{\rho}_{j}^{\nph,+},A_{j+\frac{1}{2}}^{\nph})\right) \left(- \frac{\lambda}{2}\partial_{A}f(\rho_{\jph}^{n,-}, \bar{A}_{j+1}^{n,-})  \left(A_{j+\frac{3}{2}}^{n,-}-A_{\jph}^{n,-}\right) \right.\\ & \spc \spc \left. + \frac{\lambda}{2}\partial_{A}f(\rho_{\jmh}^{n,+}, \bar{A}_{j}^{n,+})  \left(A_{\jph}^{n,+}-A_{\jmh}^{n,+}\right)\right).
\end{nalign}
\par
In the light of  \eqref{eq:k_rw} and \eqref{eq:l_rw}, the term $C_{\jph}^{n}$ in \eqref{eq:Cjphdefn} can be expressed as
\begin{align}\label{eq:c-refor}
     C_{\jph}^{n} =  \tilde{C}_{\jph}^{n} +  \hat{C}_{\jph}^{n},
\end{align}
where
\begin{nalign}\label{eq:tildehatCs}
     \tilde{C}_{\jph}^{n} &:=(1-\tilde{\ell}_{j+\frac{1}{2}}^n-\tilde{k}_{j+\frac{1}{2}}^n)(\rho_{j+1}^{n}-\rho_{j}^{n}) +
\tilde{\ell}_{j+\frac{3}{2}}^n(\rho_{j+2}^{n}-\rho_{j+1}^{n}) +\tilde{k}_{j-\frac{1}{2}}^n (\rho_{j}^{n}-\rho_{j-1}^{n}),\\  \hat{C}_{\jph}^{n}& :=  -\hat{\ell}_{j+\frac{1}{2}}^n-\hat{k}_{j+\frac{1}{2}}^n +
\hat{\ell}_{j+\frac{3}{2}}^n +\hat{k}_{j-\frac{1}{2}}^n. \end{nalign}
From \eqref{eq:bvdiff} and \eqref{eq:c-refor}, it follows that
\begin{nalign}\label{eq:tvb_initalform}
\sum_{\jinz}\abs{\rho^{n+1}_{j+1} -\rho^{n+1}_{j}} &\leq  \sum_{\jinz}\abs{\tilde{C}_{\jph}^n}+\sum_{\jinz}\abs{\hat{C}_{\jph}^n}+ \lambda\sum_{\jinz} \abs{D_{\jph}^n}.
\end{nalign}
The terms on the right-hand side of \eqref{eq:tvb_initalform} are estimated  (derived in Appendix \ref{section:tvbappendix}) as follows \begin{nalign}\label{eq:tvb_component_bds}
    \sum_{j \in \mathbb{Z}} \abs{\tilde{C}_{\jph}^{n}} &\leq \sum_{j \in \mathbb{Z}} \abs{\rho_{j+1}^n- \rho_{j}^n},\\
\sum_\jinz \abs{\hat{C}_{\jph}^{n}} 
    & \leq 4\mathcal{K}_{1}\Delta t + 4\mathcal{K}_{2} \Delta t \sum_{\jinz}\abs{\rho^n_{j+1} - \rho_{j}^n},\\
    \lambda\sum_{\jinz} \abs{D_{\jph}^n} & \leq \Delta t \mathcal{K}_{7} + \Delta t \mathcal{K}_{8}\sum_{\jinz}\abs{\rho_{j+1}^n-\rho_{j}^n},
\end{nalign}
where $\mathcal{K}_{1},  \mathcal{K}_{2}, \mathcal{K}_{7},  \mathcal{K}_{8} $ are constants independent of $\Delta x$ defined in \eqref{eq:K1K2} and \eqref{eq:K7K8}.
Now, invoking the estimates  \eqref{eq:tvb_component_bds} in \eqref{eq:tvb_initalform} yields
\begin{nalign}\label{eq:tvb_update_bd}
\sum_{\jinz}\abs{\rho^{n+1}_{j+1} -\rho^{n+1}_{j}}
& \leq \Delta t \mathcal{K}_{9} + (1+\mathcal{K}_{10}\Delta t )\sum_{\jinz}\abs{\rho_{j+1}^n-\rho_{j}^n},
\end{nalign}
where
\begin{align*}
    \mathcal{K}_{9} &:= 4\mathcal{K}_{1}+ \mathcal{K}_{7},\\
    \mathcal{K}_{10} &:= 4\mathcal{K}_{2}+ \mathcal{K}_{8}. 
\end{align*}
 Starting from \eqref{eq:tvb_update_bd} and proceeding recursively, we obtain
\begin{nalign}\label{eq:tv_exp}
    \sum_{\jinz}\abs{\rho^{n+1}_{j+1} -\rho^{n+1}_{j}}
    &\leq (1+\mathcal{K}_{10}\Delta t)^{n+1} \left(\sum_{\jinz}\abs{\rho^{0}_{j+1} -\rho^{0}_{j}} \right) + \mathcal{N}(n),
\end{nalign}
where $\mathcal{N}(n):={\mathcal{K}_9}\Delta t \left( 1+ (1+\mathcal{K}_{10}\Delta t) + \dots + (1+\mathcal{K}_{10}\Delta t)^{n-1} + (1+\mathcal{K}_{10}\Delta t)^{n} \right).$

Noting that $(1+\mathcal{K}_{10}\Delta t)^{n+1} \leq \exp(\mathcal{K}_{10}T)$ for $(n+1)\Delta t \leq T,$ the term $\mathcal{N}(n)$ in \eqref{eq:tv_exp} can be simplified as
\begin{nalign}\label{eq:kbound}
   \mathcal{N}(n)
    &= \frac{\mathcal{K}_9}{\mathcal{K}_{10}}\left((1+\mathcal{K}_{10}\Delta t)-1\right) \left[ 1+ (1+\mathcal{K}_{10}\Delta t) + \dots + (1+\mathcal{K}_{10}\Delta t)^{n-1} + (1+\mathcal{K}_{10}\Delta t)^{n}  \right]\\
    &= \frac{\mathcal{K}_9}{\mathcal{K}_{10}} \left[(1+\mathcal{K}_{10}\Delta t) + \dots+  (1+\mathcal{K}_{10}\Delta t)^{n}+  (1+\mathcal{K}_{10}\Delta t)^{n+1}   \right. \\ & \spc \spc \spc \left.  - 1 - (1+\mathcal{K}_{10}\Delta t)   \dots- (1+\mathcal{K}_{10}\Delta t)^{n}  \right]\\
    &= \frac{\mathcal{K}_{9}}{\mathcal{K}_{10}} \left((1+\mathcal{K}_{10}\Delta t)^{n+1}-1\right) \leq \frac{\mathcal{K}_9}{\mathcal{K}_{10}}(\exp(\mathcal{K}_{10}T) - 1).
\end{nalign}
Finally, in light of  \eqref{eq:kbound} from \eqref{eq:tv_exp}, we deduce \begin{nalign}\label{eq:tv_exp_b}
\mathrm{TV}(\rho_{\Delta}^{n+1}) =  \sum_{\jinz}\abs{\rho^{n+1}_{j+1} -\rho^{n+1}_{j}} 
    &\leq \exp(\mathcal{K}_{10}T)\left(  \sum_{\jinz}\abs{\rho^{0}_{j+1} -\rho^{0}_{j}} \right) + \frac{\mathcal{K}_{9}}{\mathcal{K}_{10}}(\exp(\mathcal{K}_{10}T) - 1),
\end{nalign} for $(n+1)\Delta t \leq T.$ Now, upon choosing $\mathcal{A}:=\mathcal{K}_{10}$ and $\displaystyle \mathcal{B}:=\frac{\mathcal{K}_{9}}{\mathcal{K}_{10}},$ the result \eqref{eq:tvbestimate} is immediate.
\end{proof}

\begin{theorem}\label{theorem:L1timecty}($\mathrm{L}^1$-Lipschitz continuity in time)
  Let the initial datum $\rho_0 \in \mathrm{L}^{\infty} \cap \mathrm{BV}(\mathbb{R};\mathbb{R}_{+}).$ If the CFL condition  \eqref{eq:cfl_psty} holds, then there exists a constant $\kappa$ such that the approximate solutions  generated by the proposed scheme \eqref{eq:mh} satisfy
    \begin{nalign}\label{eq:timecty}
\norm{\rho_{\Delta}(t_1,\cdot)-\rho_{\Delta}(t_2,\cdot)}_{\mathrm{L}^{1}(\mathbb{R})} & \leq \kappa (\abs{t_1 - t_2}+\Delta t),
\end{nalign}
for any $t_{1}, t_{2} \in [0,T].$
\end{theorem}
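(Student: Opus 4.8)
The plan is to reduce \eqref{eq:timecty} to a uniform one-step bound and then telescope in time. Since the approximate solution is piecewise constant in time by \eqref{eq:approxsoln}, for $t_1 \in [t^{n_1}, t^{n_1+1})$ and $t_2 \in [t^{n_2}, t^{n_2+1})$ with (without loss of generality) $n_1 \le n_2$, we have $\rho_{\Delta}(t_i,\cdot) = \rho_{\Delta}(t^{n_i},\cdot)$, so the left-hand side of \eqref{eq:timecty} equals $\norm{\rho_{\Delta}(t^{n_1},\cdot) - \rho_{\Delta}(t^{n_2},\cdot)}_{\mathrm{L}^{1}(\mathbb{R})}$. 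The first and principal task is therefore to establish a one-step estimate
\[
\norm{\rho_{\Delta}(t^{n+1},\cdot) - \rho_{\Delta}(t^{n},\cdot)}_{\mathrm{L}^{1}(\mathbb{R})} \le \kappa' \Delta t
\]
with $\kappa'$ independent of $n$ and $\Delta x$.

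For this one-step bound I would start from the conservative form \eqref{eq:mh}, which gives $\rho_j^{n+1} - \rho_j^n = -\lambda(F^{\nph}_{\jph} - F^{\nph}_{\jmh})$ and hence
\[
\norm{\rho_{\Delta}(t^{n+1},\cdot) - \rho_{\Delta}(t^{n},\cdot)}_{\mathrm{L}^{1}(\mathbb{R})} = \Delta x \sum_{\jinz} \abs{\rho_j^{n+1} - \rho_j^n} = \Delta t \sum_{\jinz} \abs{F^{\nph}_{\jph} - F^{\nph}_{\jmh}}.
\]
It then suffices to bound $\sum_{\jinz} \abs{F^{\nph}_{\jph} - F^{\nph}_{\jmh}}$ by a uniform constant; the factor $1/\lambda$ in the Lax--Friedrichs flux \eqref{eq:midtimenumflux} is harmless because $\lambda$ is held fixed under the CFL condition \eqref{eq:cfl_psty}. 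I would split this flux difference by varying one argument at a time, using the Lipschitz continuity of $F$ in $u,v$ (with constant $\tfrac12\norm{\partial_{\rho}f} + \tfrac{\alpha}{2\lambda}$, controlled through \eqref{hyp:H1}) and in $A$ (via \eqref{hyp:H2} together with the mid-time bound \eqref{eq:recon_bd}). This reduces matters to three ingredients: the discrete variations $\sum_{\jinz}\abs{\rho^{\nph,\pm}_{\jph} - \rho^{\nph,\pm}_{\jmh}}$ of the mid-time interface values, and the sum $\sum_{\jinz}\abs{A^{\nph}_{\jph} - A^{\nph}_{\jmh}}$ of consecutive mid-time convolution differences.

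The variation of the mid-time interface values is handled exactly as in the proof of Theorem \ref{theorem:TVbound}: expanding $\rho^{\nph,-}_{\jph} - \rho^{\nph,-}_{\jmh}$ as in \eqref{eq:leftmidvldif}, invoking Remark \ref{remark:ratioslopes} together with the uniform total-variation bound of Theorem \ref{theorem:TVbound}, yields $\sum_{\jinz}\abs{\rho^{\nph,\pm}_{\jph} - \rho^{\nph,\pm}_{\jmh}} \le C\,\mathrm{TV}(\rho_{\Delta}(t^n,\cdot)) + C'$, which is bounded uniformly for $n\Delta t \le T$. For the convolution term, I would use \eqref{eq:midtimeconv}, take absolute differences, interchange the order of summation, and bound $\sum_{\jinz}\abs{\mu_{j+1-l} - \mu_{j-l}}$ by the finite, $\Delta x$-independent total variation of $\mu$ guaranteed by \eqref{hyp:H4}; combined with Lemma \ref{lemma:midtimebd} and the $\mathrm{L}^1$-stability of Theorem \ref{theorem:L1stability}, this gives $\sum_{\jinz}\abs{A^{\nph}_{\jph} - A^{\nph}_{\jmh}} \le C \norm{\mu^{\prime}}\norm{\rho_0}_{\mathrm{L}^{1}(\mathbb{R})}$. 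These estimates assemble into the desired uniform $\kappa'$.

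Finally, telescoping gives $\norm{\rho_{\Delta}(t^{n_1},\cdot) - \rho_{\Delta}(t^{n_2},\cdot)}_{\mathrm{L}^{1}(\mathbb{R})} \le \sum_{n=n_1}^{n_2-1}\norm{\rho^{n+1} - \rho^n}_{\mathrm{L}^{1}(\mathbb{R})} \le \kappa'(n_2 - n_1)\Delta t$, and since $\abs{t_1 - t_2} \ge (n_2 - n_1 - 1)\Delta t$ we have $(n_2 - n_1)\Delta t \le \abs{t_1 - t_2} + \Delta t$, which yields \eqref{eq:timecty} with $\kappa = \kappa'$ (the case $n_1 = n_2$ being trivial). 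I expect the main obstacle to be the one-step bound itself --- specifically, establishing the uniform variation bound on the mid-time interface values, where the predictor step \eqref{eq:midtimevalues} and the slope limiter interact, and correctly tracking the $1/\lambda$ factor from the numerical diffusion so that it contributes only at order $\Delta t$.
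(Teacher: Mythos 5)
Your proposal is correct, but it is organized differently from the paper's proof. You work with the conservative form of \eqref{eq:mh} and bound $\Delta t \sum_{\jinz} \abs{F^{\nph}_{\jph} - F^{\nph}_{\jmh}}$ by Lipschitz-splitting the flux in its three arguments, which leaves you with (a) the discrete variations $\sum_{\jinz}\abs{\rho^{\nph,\pm}_{\jph}-\rho^{\nph,\pm}_{\jmh}}$ of the mid-time interface values and (b) the summed convolution differences $\sum_{\jinz}\abs{A^{\nph}_{\jph}-A^{\nph}_{\jmh}}$; the first is controlled by $\mathrm{TV}(\rho_{\Delta}(t^n,\cdot))$ through the expansion \eqref{eq:leftmidvldif} (the paper proves precisely these bounds in the appendix, estimates \eqref{eq:sum_midtimerdif}--\eqref{eq:sum_midtimeldif}), and the second by a $\mathrm{TV}(\mu)$-type summation together with Lemma \ref{lemma:midtimebd} and Theorem \ref{theorem:L1stability}. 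The paper instead recycles the incremental reformulation \eqref{eq:mhstability} already built for the $\mathrm{L}^{\infty}$-stability proof: the one-step difference is read off from the coefficients $\tilde{a}^{\nph}_{\jmh}, \tilde{b}^{\nph}_{\jph}$ (each bounded by $\tfrac12$, producing the $\tfrac{1}{\lambda}\sum_{\jinz}\abs{\rho^n_{j+1}-\rho^n_j}$ term), the already-estimated correction terms $\tilde{c}^{n}_{\jmh}, \tilde{d}^{n}_{\jph}$, and a flux difference in the $A$-argument alone, estimated in \eqref{eq:Fdifbd_timecty}. The mathematical content is the same in both routes --- each reduces the one-step $\mathrm{L}^1$ difference to $C\Delta t\left(\mathrm{TV}(\rho_{\Delta}(t^n,\cdot)) + 1\right)$ and then invokes Theorem \ref{theorem:TVbound} --- but your argument is the more standard conservative-form one and is self-contained given the appendix estimates, whereas the paper's gets the one-step bound almost for free from machinery already assembled for Theorem \ref{theorem:Linfty}. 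Your treatment of the $\alpha/(2\lambda)$ numerical-diffusion coefficient (harmless since $\lambda$ is fixed; note the paper's $\kappa$ likewise carries a factor $1/\lambda$) and your telescoping endgame, where the $+\Delta t$ slack in \eqref{eq:timecty} absorbs the off-by-one in $(n_2-n_1)\Delta t \leq \abs{t_1-t_2}+\Delta t$, are both sound; the paper leaves this last step implicit.
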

\begin{proof}

Recalling the scheme \eqref{eq:mh} written in the form \eqref{eq:mhstability}:
\begin{nalign}
    \rho_{j}^{n+1} 
 &= \rho_{j}^{n}(1-\tilde{a}^{\nph}_{j-\frac{1}{2}}-\tilde{b}^{\nph}_{j+\frac{1}{2}}) + \tilde{a}^{\nph}_{j-\frac{1}{2}} \rho_{j-1}^{n}          + \tilde{b}^{\nph}_{j+\frac{1}{2}}\rho^{n}_{j+1}\\& \spc - \frac{1}{2} \tilde{c}_{\jmh}^{n}\left(\lambda\partial_{\rho}f(\bar{\rho}_{j}^{\nph,-},A^{\nph}_{\jmh})+ \alpha\right)  + \frac{1}{2} \tilde{d}_{\jph}^{n}\left(\alpha-\lambda\partial_{\rho}f(\bar{\rho}_{j}^{\nph,+},A^{\nph}_{\jph})\right)\no\\& \hspace{0.5cm}+ \lambda\left[F(\rho_{j+\frac{1}{2}}^{\nph,-}, \rho_{j-\frac{1}{2}}^{\nph, +}, A^{\nph}_{j-\frac{1}{2}}) 
 -F(\rho_{j+\frac{1}{2}}^{\nph,-}, \rho_{j-\frac{1}{2}}^{\nph,+}, A^{\nph}_{j+\frac{1}{2}})\right], \no 
\end{nalign}
we write
\begin{nalign}\label{eq:timecty_initial}
    \norm{\rho_{\Delta}(t^{n+1},\cdot)-\rho_{\Delta}(t^{n},\cdot)} _{\mathrm{L}^{1}(\mathbb{R})} &= \Delta x \sum_{\jinz}\abs{\rho_{j+1}^{n+1}-\rho_{j}^{n}} \\ &\leq  \Delta x \sum_{\jinz}\abs{\rho_{j-1}^{n}-\rho_{j}^{n}}\abs{\tilde{a}^{\nph}_{j-\frac{1}{2}}}+ \Delta x \sum_{\jinz}\abs{\rho_{j+1}^{n}-\rho_{j}^{n}}\abs{\tilde{b}^{\nph}_{j+\frac{1}{2}}} \\& \spc + \frac{1}{2}\Delta x  \left(\lambda\norm{\partial_{\rho}f}+ \alpha\right)\sum_{\jinz} \left(\abs{\tilde{c}_{\jmh}^{n}}+\abs{\tilde{d}_{\jph}^{n}} \right) \\& \spc +\Delta t \sum_{\jinz} \abs[\Big]{F(\rho_{j+\frac{1}{2}}^{\nph,-}, \rho_{j-\frac{1}{2}}^{\nph, +}, A^{\nph}_{j-\frac{1}{2}}) 
 -F(\rho_{j+\frac{1}{2}}^{\nph,-}, \rho_{j-\frac{1}{2}}^{\nph,+}, A^{\nph}_{j+\frac{1}{2}})}.
\end{nalign}

In order to estimate the last term in \eqref{eq:timecty_initial}, we apply the mean value theorem and subsequently use the estimate  \eqref{eq:midtimeconvdiff}, hypothesis \eqref{hyp:H2} and Lemma \ref{lemma:midtimebd} to yield 
\begin{nalign}\label{eq:Fdifbd_timecty}   &\sum_{\jinz}\abs{F(\rho_{j+\frac{1}{2}}^{\nph,-}, \rho_{j-\frac{1}{2}}^{\nph, +}, A^{\nph}_{j-\frac{1}{2}}) 
 -F(\rho_{j+\frac{1}{2}}^{\nph,-}, \rho_{j-\frac{1}{2}}^{\nph,+}, A^{\nph}_{j+\frac{1}{2}})}\\
 & \spc \leq \frac{1}{2}\sum_{\jinz}\left(\abs[\big]{\partial_{A}f(\rho_{\jph}^{\nph,-}, \bar{A}_{j}^{\nph})} + \abs[\Big]{\partial_{A}f(\rho_{\jmh}^{\nph,+}, \tilde{A}_{j}^{\nph})} \right) \abs[big]{A_{\jmh}^{\nph}- A_{\jph}^{\nph}}\\& \spc \leq M\norm{\mu^{\prime}}(1+\theta)^2  (1+ {\lambda}\norm{\partial_{\rho}f})^{2}\norm{\rho_{0}}_{\mathrm{L}^{1}(\mathbb{R})} {\Delta x} \sum_{\jinz}\rho_{j}^n  \\
 &\spc \leq M\norm{\mu^{\prime}}(1+\theta)^2  (1+ {\lambda}\norm{\partial_{\rho}f})^{2}\norm{\rho_{0}}_{\mathrm{L}^{1}(\mathbb{R})}^{2}.  
\end{nalign}

Now, invoking the estimates \eqref{eq:tildea-bd},  \eqref{eq:tildeb_bd}, \eqref{eq:tildec_bd}, \eqref{eq:tilded_bd} (derived in the proof of Theorem \ref{theorem:Linfty}) and \eqref{eq:Fdifbd_timecty}, the $\mathrm{L}^1$ distance in \eqref{eq:timecty_initial} can be estimated as
\begin{nalign}\label{eq:timecty_1step}
      \norm{\rho_{\Delta}(t^{n+1},\cdot)-\rho_{\Delta}(t^{n},\cdot)} _{\mathrm{L}^{1}(\mathbb{R})}
  & \leq \Delta t \left(\frac{1}{\lambda} \sum_{\jinz}\abs{\rho_{j+1}^n-\rho_{j}^n} + \mathcal{J}\right)\\
& \leq  \Delta t \kappa,
\end{nalign}
for $(n+1)\Delta t \leq T,$  where  
\begin{align*}
\mathcal{J}&:=  M\norm{\mu^{\prime}}(1+\theta)^2  \norm{\rho_{0}}_{\mathrm{L}^{1}(\mathbb{R})}^{2}(\alpha+ {\lambda}\norm{\partial_{\rho}f}+(1+{\lambda}\norm{\partial_{\rho}f})^2), \\ \kappa &:=  \frac{1}{\lambda} \left(\exp(\mathcal{A}T)\mathrm{TV}(\rho_{0}) + \mathcal{B}(\exp(\mathcal{A}T) - 1)\right) + \mathcal{J},
    \end{align*} and  the last inequality follows from Theorem \ref{theorem:TVbound}.
The result \eqref{eq:timecty}  is now an immediate consequence of \eqref{eq:timecty_1step}.

\end{proof}

\section{Convergence of the numerical scheme}\label{section:entropy}
In this section, we show that the numerical scheme \eqref{eq:mh} converges to the unique entropy solution of the problem \eqref{eq:problem}. To begin with, we recall a result originally established in \cite{vila1988}, and later adapted to the case of non-local conservation laws in \cite{gowda2023} (Theorem 5.1).

\begin{theorem}\label{theorem:entconver} Suppose that a numerical scheme that approximates \eqref{eq:problem} can be written in the form:
\begin{equation}
\rho_j^{n+1}=\tilde{ \rho}_{j}^{n+1} - e_{\jph}^{n+1}+e_{\jmh}^{n+1},
\label{second order}
\end{equation}
 where\\
 (i) $\tilde{ \rho}_{j}^{n+1}$ is computed from $\rho_j^n$ using a scheme which yields a  sequence  of approximate solutions converging in $\mathrm{L}^1_{\mathrm{loc}}$ to the entropy solution of \eqref{eq:problem}.\\
 (ii) $|e^{n+1}_{\jph}| \leq K{\Delta x}^\delta$ for  $(n+1)\Delta t \leq T,$ where  $K>0$  and $\delta \in (0,1)$ are some constants independent of $\Delta x.$ \\
 (iii) The approximate solutions $ \rho_{\Delta}$ obtained using \eqref{second order} are  in $\mathrm{L}^{\infty},$ $\mathrm{BV},$  and admits $\mathrm{L}^{1}$- Lipschitz continuity in time.\\
Then the approximate solutions generated by the scheme (\ref{second order}) converges in $\mathrm{L}^1_{\mathrm{loc}}$  to the entropy solution of \eqref{eq:problem}. 
\end{theorem}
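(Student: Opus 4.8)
The plan is to prove Theorem~\ref{theorem:entconver} by a Lax--Wendroff-type argument built on the compactness already granted by hypothesis~(iii), using the discrete entropy inequality of the underlying first-order scheme from~(i) as the engine and treating the conservative correction $-e_{\jph}^{n+1}+e_{\jmh}^{n+1}$ as a vanishing perturbation controlled by~(ii). First I would fix $T>0$ and invoke~(iii): the uniform $\mathrm{L}^\infty$ and $\mathrm{BV}$ bounds together with the $\mathrm{L}^1$-Lipschitz continuity in time (Theorems~\ref{theorem:Linfty}, \ref{theorem:TVbound} and~\ref{theorem:L1timecty} for the scheme at hand) let Kolmogorov's compactness theorem extract a subsequence with $\rho_\Delta \to \rho$ in $\mathrm{L}^1_{\mathrm{loc}}([0,T)\times\mathbb{R})$. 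Since the entropy solution of~\eqref{eq:problem} is unique, it suffices to show that any such limit $\rho$ satisfies~\eqref{eq:entropy_ineq}; this promotes subsequential convergence to convergence of the whole family.

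The heart of the argument is the following. By~(i) the base scheme producing $\tilde\rho_j^{n+1}$ from $\rho^n$ is entropy-consistent, so for the Kruzhkov entropy $\eta(u)=\abs{u-\kappa}$ it obeys a discrete cell entropy inequality $\eta(\tilde\rho_j^{n+1})\le \eta(\rho_j^n)-\lambda(\Phi_{\jph}^n-\Phi_{\jmh}^n)+\lambda\,\Xi_j^n$, with $\Phi$ a numerical entropy flux consistent with $\mathrm{sgn}(\rho-\kappa)(f(\rho,A)-f(\kappa,A))$ and $\Xi_j^n$ a consistent discretization of the non-local term $-\mathrm{sgn}(\rho-\kappa)\partial_Af(\kappa,A)\partial_xA$. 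I would then carry the correction through using the one-sided bound
\[
\eta(\rho_j^{n+1})-\eta(\tilde\rho_j^{n+1})\le -\,\mathrm{sgn}(\rho_j^{n+1}-\kappa)\big(e_{\jph}^{n+1}-e_{\jmh}^{n+1}\big),
\]
which preserves the conservative form of the perturbation. Multiplying the resulting inequality by $\Delta x\,\varphi_j^n$ for $\varphi\in \mathrm{C}_c^1([0,T)\times\mathbb{R};\mathbb{R}^+)$, summing over $j,n$ and performing summation by parts in both variables, the entropy, flux and source sums converge to the three integrals in~\eqref{eq:entropy_ineq}: here I would use $\rho_\Delta\to\rho$ in $\mathrm{L}^1_{\mathrm{loc}}$, consistency of $\Phi$, and the fact that hypotheses~\eqref{hyp:H2} and~\eqref{hyp:H4} make the discrete convolutions converge to $A=\mu*\rho$ and to $\partial_xA$, so that $\Xi_j^n$ reproduces the non-local entropy term in the limit, while the initial-data sum tends to $\int\abs{\rho_0-\kappa}\varphi(0,\cdot)$.

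The main obstacle is the final piece: showing the conservative correction contributes nothing in the limit. After summation by parts it becomes $\sum_{n,j}\Delta x\,(\psi_{j+1}^n-\psi_j^n)\,e_{\jph}^{n+1}$ with $\psi_j^n=\varphi_j^n\,\mathrm{sgn}(\rho_j^{n+1}-\kappa)$. Splitting $\psi_{j+1}^n-\psi_j^n$ into the test-function increment $(\varphi_{j+1}^n-\varphi_j^n)\mathrm{sgn}(\rho_{j+1}^{n+1}-\kappa)$ and the sign-jump $\varphi_j^n\big(\mathrm{sgn}(\rho_{j+1}^{n+1}-\kappa)-\mathrm{sgn}(\rho_j^{n+1}-\kappa)\big)$, the first part is $O(\Delta x)$ pointwise and, with~(ii), contributes $O(\Delta x^\delta)$. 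The delicate term is the sign-jump part, which is only $O(1)$ at each level crossing, so a crude per-cell count would diverge like $\Delta x^{\delta-1}$. I would resolve this by verifying~\eqref{eq:entropy_ineq} for a.e.\ $\kappa$: integrating the sign-jump contribution in $\kappa$ and using the coarea identity $\int_{\mathbb{R}}\#\{\text{sign changes of }\rho^{n+1}-\kappa\}\,\dif\kappa=\mathrm{TV}(\rho^{n+1})$ together with the uniform $\mathrm{BV}$ bound of~(iii) replaces the divergent cell count by a bounded total variation, yielding again an $O(\Delta x^\delta)$ estimate that vanishes by~(ii).

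Finally, since both sides of~\eqref{eq:entropy_ineq} depend continuously on $\kappa$, the inequality obtained for a.e.\ $\kappa$ extends to every $\kappa\in\mathbb{R}$, so $\rho$ is the entropy solution; uniqueness then upgrades the subsequential limit to convergence of the full sequence, completing the proof. The key technical tension to flag is that the correction enters the Kruzhkov balance without a factor of $\Delta t$, so only the interplay of the conservative form (supplying one $\Delta x$ via $\varphi$-differences) with the $\kappa$-averaged coarea bound against the $\mathrm{BV}$ estimate makes the perturbation negligible.
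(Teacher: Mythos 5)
Your core strategy is sound, and it is essentially the Vila-type perturbation argument that the paper itself invokes (its "proof" is a one-line citation of Theorem 5.1 in the reference on the earlier MUSCL--Hancock scheme, which goes back to Vila's framework): compactness from (iii), a discrete Kruzhkov cell entropy inequality for the monotone base step, the one-sided bound $\abs{\rho_j^{n+1}-\kappa}-\abs{\tilde\rho_j^{n+1}-\kappa}\le-\mathrm{sgn}(\rho_j^{n+1}-\kappa)\bigl(e_{\jph}^{n+1}-e_{\jmh}^{n+1}\bigr)$, summation by parts exploiting the conservative form, and then neutralizing the sign-jump term. You correctly identified that the sign jumps are the genuine obstruction (a crude per-cell count diverges like $\Delta x^{\delta-1}$), and your fix -- integrating in $\kappa$, using the coarea identity $\int\#\{\text{crossings}\}\,\dif\kappa=\mathrm{TV}$ together with the uniform BV bound -- gives the correct $O(\Delta x^{\delta}/\lambda)$ estimate. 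One point you should make explicit: your argument uses more than hypothesis (i) as literally stated. You need the base scheme, applied for one step to the iterates $\rho^n$ of the \emph{corrected} scheme, to satisfy a discrete entropy inequality with a consistent non-local source; mere $\mathrm{L}^1_{\mathrm{loc}}$-convergence of the base scheme from the initial data does not formally yield this. This stronger property is indeed what the cited convergence proofs of the first-order Lax--Friedrichs scheme provide, and it is what any proof of this theorem actually runs on, but it should be stated as the operative form of (i).

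The one step that is wrong as written is the final continuity-in-$\kappa$ claim. The first two terms of \eqref{eq:entropy_ineq} are continuous in $\kappa$, but the non-local source term is not: at a value $\kappa$ for which $\{(t,x):\rho(t,x)=\kappa\}\cap\mathrm{supp}\,\varphi$ has positive measure, the integrand $\mathrm{sgn}(\rho-\kappa')\,\partial_{A}f(\kappa',A)\,\partial_x A\,\varphi$ converges, as $\kappa'\downarrow\kappa$ (resp. $\kappa'\uparrow\kappa$), to $\mp\,\partial_{A}f(\kappa,A)\,\partial_x A\,\varphi$ on that set, not to the value $0$ dictated by $\mathrm{sgn}(0)=0$; so the functional has a genuine jump there. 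The standard patch: the set of such bad $\kappa$ is at most countable; for a bad $\kappa$ take sequences of good values $\kappa_m\downarrow\kappa$ and $\kappa_m'\uparrow\kappa$ for which the inequality is already established, pass to the limit in both, and average the two resulting inequalities -- the $\pm\,\partial_{A}f(\kappa,A)\,\partial_x A\,\varphi$ contributions on $\{\rho=\kappa\}$ cancel, recovering exactly the $\mathrm{sgn}(0)=0$ convention. With that replacement (or with a remark that a.e.~$\kappa$ suffices for the uniqueness argument of the non-local Kruzhkov framework), your proof is complete and matches the intended one in substance.
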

\cblue{The proof of this result follows along similar lines to that of Theorem 5.1 from \cite{gowda2023}.}
Now our goal is to write the scheme \eqref{eq:mh} in the form \eqref{second order} and show that it satisfies all the hypotheses of Theorem \ref{theorem:entconver}. To this end, we first observe that the scheme \eqref{eq:mh} can be written in the form
\begin{nalign}\label{eq:mhpredcorr}
    \tilde{\rho}^{n+1}_{j} &= \rho^{n}_{j} - \lambda\left[F(\rho_{j}^{n},\rho_{j+1}^{n}, A^{n}_{\jph}) - F(\rho_{j-1}^{n}, \rho_{j}^{n}, A^{n}_{\jmh})\right],\\
    \rho^{n+1}_{j}&=\tilde{\rho}^{n+1}_{j}- e_{\jph}^{n+1} +  e_{\jmh}^{n+1}, 
\end{nalign}
where $ \displaystyle A^{n}_{\jph} $ is as defined in \eqref{eq:lxf} and \begin{nalign}\label{eq:ejph}
    e_{\jph}^{n+1} &:= \lambda \left( F(\rho_{j+\frac{1}{2}}^{n+\frac{1}{2},-}, \rho_{j+\frac{1}{2}}^{n+\frac{1}{2}, +}, A^{\nph}_{\jph})- F(\rho_{j}^{n},\rho_{j+1}^{n}, A^{n}_{\jph})\right).
    \end{nalign}
Additionally, we modify the scheme \eqref{eq:mhpredcorr} by redefining the slopes \eqref{slope-1} as follows 
\begin{nalign}\label{eq:mod_slope}
    \sigma^{n}_{j}& = 2 \theta \textrm{minmod}\left((\rho^{n}_{j}-\rho^{n}_{j-1}), \  \frac{1}{2}(\rho^{n}_{j+1}-\rho^{n}_{j-1}), \  (\rho^{n}_{j+1}-\rho^{n}_{j}), \mathrm{sgn}(\rho^{n}_{j+1}-\rho^{n}_{j}) \mathcal{K}(\Delta x)^{\delta} \right),
\end{nalign} for some $\mathcal{K}>0$ and some $\delta \in (0,1).$

\begin{lemma}\label{lemma:corrtermbd}
    There exists a constant $K>0$ such that  the correction terms $\{e_\jph^{n+1}\}_{\jinz}$ in the scheme \eqref{eq:mhpredcorr} with the modified slopes \eqref{eq:mod_slope} satisfy 
    \begin{align}\label{eq:corrtermbd}
        \abs{e_\jph^{n+1}} & \leq K(\Delta x)^{\delta} \quad \mbox{for} \,\, \jinz \,\, \mbox{and} \,\, (n+1) \Delta t \leq T.
    \end{align}
\end{lemma}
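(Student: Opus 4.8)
The plan is to expand the flux difference defining $e^{n+1}_{\jph}$ in \eqref{eq:ejph} argument by argument and to show that each of the three arguments of $F$ in the second-order flux differs from its first-order counterpart by $\mathcal{O}((\Delta x)^{\delta})$; the Lipschitz structure of $F$ in \eqref{eq:midtimenumflux} then yields \eqref{eq:corrtermbd}. The decisive input is the extra entry $\mathrm{sgn}(\rho^n_{j+1}-\rho^n_j)\mathcal{K}(\Delta x)^{\delta}$ in the modified minmod \eqref{eq:mod_slope}: since the minmod of arguments sharing a common sign never exceeds the smallest of them in magnitude (and vanishes otherwise), it caps $\abs{\sigma^n_j}\le 2\theta\mathcal{K}(\Delta x)^{\delta}\le\mathcal{K}(\Delta x)^{\delta}$ uniformly in $j$ and $n$. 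Consequently, from \eqref{eq:reconvalues}, $\abs{\rho^{n,-}_{\jph}-\rho^n_j}=\tfrac12\abs{\sigma^n_j}$ and $\abs{\rho^{n,+}_{\jph}-\rho^n_{j+1}}=\tfrac12\abs{\sigma^n_{j+1}}$ are both $\mathcal{O}((\Delta x)^{\delta})$.

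Next I would control the predictor (Taylor) increment common to both lines of \eqref{eq:midtimevalues}, namely $\tfrac{\lambda}{2}\big(f(\rho^{n,-}_{\jph},A^{n,-}_{\jph})-f(\rho^{n,+}_{\jmh},A^{n,+}_{\jmh})\big)$. Splitting this into a $\rho$-increment and an $A$-increment and applying the mean value theorem, the $\rho$-part is at most $\norm{\partial_\rho f}\abs{\rho^{n,-}_{\jph}-\rho^{n,+}_{\jmh}}=\norm{\partial_\rho f}\abs{\sigma^n_j}$, while the $A$-part is bounded through hypothesis \eqref{hyp:H2} and the $\mathrm{L}^{\infty}$ bound $C$ of Theorem \ref{theorem:Linfty} by $MC\abs{A^{n,-}_{\jph}-A^{n,+}_{\jmh}}=MC\abs{s^n_j}$. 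Since $\abs{s^n_j}=\theta\abs{A^n_{j+1}-A^n_{j-1}}\le 2\theta\,\Delta x\,\norm{\mu'}\norm{\rho_0}_{\mathrm{L}^1(\mathbb{R})}$ by the grid-Lipschitz estimate on the convolution used in the proof of Theorem \ref{theorem:Linfty} (cf.\ \eqref{eq:lrvaluediff_bd}), and $\lambda$ is bounded above by the CFL condition \eqref{eq:cfl_psty}, this increment is $\mathcal{O}((\Delta x)^{\delta})$ (recall $\delta<1$). Combining with the previous step gives the uniform bounds $\abs{\rho^{\nph,-}_{\jph}-\rho^n_j},\ \abs{\rho^{\nph,+}_{\jmh}-\rho^n_j}=\mathcal{O}((\Delta x)^{\delta})$.

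I then estimate the convolution arguments, showing $\abs{A^{\nph}_{\jph}-A^n_{\jph}}=\mathcal{O}((\Delta x)^{\delta})$. Subtracting the first-order $A^n_{\jph}$ of \eqref{eq:lxf} from the trapezoidal $A^{\nph}_{\jph}$ of \eqref{eq:midtimeconv} gives $\tfrac{\Delta x}{2}\sum_{l}\big[\mu_{j+1-l}(\rho^{\nph,+}_{l-\frac12}-\rho^n_l)+\mu_{j-l}(\rho^{\nph,-}_{l+\frac12}-\rho^n_l)\big]$, and inserting the uniform increment bound from the previous paragraph bounds this by $\mathcal{O}((\Delta x)^{\delta})\cdot\Delta x\sum_{l}(\abs{\mu_{j+1-l}}+\abs{\mu_{j-l}})$; the remaining sum is a Riemann sum for $2\norm{\mu}_{\mathrm{L}^1(\mathbb{R})}<\infty$, finite because $\mu\in\mathrm{C}^2_c$ by \eqref{hyp:H4}. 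Finally, writing $e^{n+1}_{\jph}=\lambda\big(F(u_1,v_1,A_1)-F(u_0,v_0,A_0)\big)$ with $(u_1,v_1,A_1)=(\rho^{\nph,-}_{\jph},\rho^{\nph,+}_{\jph},A^{\nph}_{\jph})$ and $(u_0,v_0,A_0)=(\rho^n_j,\rho^n_{j+1},A^n_{\jph})$, I expand $F$ via \eqref{eq:midtimenumflux}: the numerical-viscosity term contributes $\tfrac{\alpha}{2}\abs{(v_1-v_0)-(u_1-u_0)}$, the outer $\lambda$ cancelling the $\tfrac1\lambda$, while each physical-flux difference is handled by the same $\rho$-then-$A$ mean value splitting. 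Collecting all constants — depending only on $\alpha,\lambda,M,\theta,\norm{\partial_\rho f},\norm{\mu}_{\mathrm{W}^{1,\infty}},\norm{\rho_0}_{\mathrm{L}^1(\mathbb{R})}$ and $C$, none on $\Delta x$ — into one constant $K$ yields \eqref{eq:corrtermbd}.

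The step I expect to be the main obstacle is the convolution estimate: it is an infinite sum over $l$, so the $\mathcal{O}((\Delta x)^{\delta})$ bound on $\rho^{\nph,\pm}-\rho^n$ must hold uniformly in the cell index rather than merely on average, and one must simultaneously exploit the $\mathrm{L}^1$-summability $\Delta x\sum_l\abs{\mu_{j-l}}\le C$ furnished by the compact support of $\mu$. A secondary but essential subtlety is the bookkeeping of $\lambda$: it is precisely the outer factor $\lambda$ in \eqref{eq:ejph} that absorbs the $\tfrac1\lambda$ of the Lax--Friedrichs viscosity, so that the resulting constant $K$ is genuinely independent of the mesh.
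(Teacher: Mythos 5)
Your proposal is correct and follows essentially the same route as the paper's own proof: you cap $\abs{\sigma_j^n}$ by $\mathcal{K}(\Delta x)^{\delta}$ via the modified minmod, bound $\abs{s_j^n}=\theta\abs{A_{j+1}^n-A_{j-1}^n}=\mathcal{O}(\Delta x)$ by the convolution Lipschitz estimate, deduce $\abs{\rho^{\nph,\mp}_{j\pm\half}-\rho^n_{j}},\,\abs{A^{\nph}_{\jph}-A^{n}_{\jph}}=\mathcal{O}((\Delta x)^{\delta})$ using the mean value theorem, \eqref{hyp:H2}, the $\mathrm{L}^{\infty}$ bound and the compact support of $\mu$, and assemble these through the Lax--Friedrichs flux exactly as in the paper (including the cancellation of the outer $\lambda$ against the $1/\lambda$ in the viscosity term). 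The only differences are presentational (you derive the auxiliary estimates before expanding $e^{n+1}_{\jph}$, the paper after) and immaterial constants.
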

\begin{proof}
We expand $e_{\jph}^{n+1}$, add and subtract suitable terms and subsequently use the mean value theorem to write 
\begin{nalign}
    e_{\jph}^{n+1} 
    & =\frac{\lambda}{2}\left(\partial_{\rho}f(\bar{\rho}_{j+\frac{1}{2}}^{n+\frac{1}{2},-},A^{\nph}_{\jph})(\rho_{j+\frac{1}{2}}^{n+\frac{1}{2},-} -\rho_{j}^{n}) + \partial_{A}f(\rho_{j}^{n}, \bar{A}^{n}_{\jph})(A^{\nph}_{\jph}- A^{n}_{\jph}) \right) \\ & \spc  + \frac{\lambda}{2}\left(\partial_{\rho}f(\bar{\rho}_{j+\frac{1}{2}}^{n+\frac{1}{2},+},A^{\nph}_{\jph})({\rho}_{j+\frac{1}{2}}^{n+\frac{1}{2},+}-\rho_{j+1}^n) + \partial_{A}f(\rho_{j+1}^{n},\tilde{A}^{n}_{\jph})({A}^{\nph}_{\jph}- {A}^{n}_{\jph}) \right)\\
    & \spc -\frac{1}{2}\alpha  \left(\rho_{j+\frac{1}{2}}^{n+\frac{1}{2},+}-\rho_{j+1}^n\right) + \frac{1}{2}\alpha \left(\rho_{j+\frac{1}{2}}^{n+\frac{1}{2},-}-\rho_{j}^n\right) \\
    & = \left(\frac{\lambda}{2}\partial_{\rho}f(\bar{\rho}_{j+\frac{1}{2}}^{n+\frac{1}{2},-},A^{\nph}_{\jph}) + \frac{1}{2}\alpha\right)(\rho_{j+\frac{1}{2}}^{n+\frac{1}{2},-} -\rho_{j}^{n}) \\ & \spc + \left( \frac{\lambda}{2}\partial_{\rho}f(\bar{\rho}_{j+\frac{1}{2}}^{n+\frac{1}{2},+},A^{\nph}_{\jph}) -\frac{1}{2}\alpha \right) ({\rho}_{j+\frac{1}{2}}^{n+\frac{1}{2},+}-\rho_{j+1}^n)\\
    & \spc + \left(\partial_{A}f(\rho_{j}^{n}, \bar{A}^{n}_{\jph}) + \partial_{A}f(\rho_{j+1}^{n},\tilde{A}^{n}_{\jph}) \right)(A^{\nph}_{\jph}- A^{n}_{\jph}),
\end{nalign}
where $\bar{\rho}_{j+\frac{1}{2}}^{n+\frac{1}{2},-} \in \mathcal{I}(\rho_{j}^n, \rho_{j+\frac{1}{2}}^{n+\frac{1}{2},-}),$ $\bar{\rho}_{j+\frac{1}{2}}^{n+\frac{1}{2},+} \in \mathcal{I}(\rho_{j+1}^n, \rho_{j+\frac{1}{2}}^{n+\frac{1}{2},+}),$ and $\bar{A}^{n}_{\jph}, \tilde{A}^{n}_{\jph} \in \mathcal{I}({A}^{n}_{\jph}, {A}^{\nph}_{\jph}).$ In order to obtain a bound on $e^{n+1}_{\jph},$ it is sufficient to estimate the terms $ \abs{\rho_{j+\frac{1}{2}}^{n+\frac{1}{2},-} -\rho_{j}^{n}},$ $\abs{\rho_{j-\frac{1}{2}}^{n+\frac{1}{2},+} -\rho_{j}^{n}}$  and $\abs{A^{\nph}_{\jph}- A^{n}_{\jph}}.$ To this end, we first note that
\begin{align}\label{eq:sjbd}
    \abs{A^{n,-}_{j+\frac{1}{2}}-A^{n,+}_{j-\frac{1}{2}}} =\abs{s_j^n} &= \theta\abs{A_{j+1}^n - A_{j-1}^n} \leq \theta \Delta x \norm{\mu^\prime}\norm{\rho_{0}}_{\mathrm{L}^1{\mathbb{R}}}.
\end{align}
Now, expanding $\rho_{j+\frac{1}{2}}^{n+\frac{1}{2},-}$ and $\rho_{j-\frac{1}{2}}^{n+\frac{1}{2},+}$ from \eqref{eq:midtimevalues}, adding and subtracting appropriate terms, subsequently applying the mean value theorem and hypothesis \eqref{hyp:H2}, and finally applying Lemma \ref{lemma:reconvaluebd}, Theorem \ref{theorem:Linfty} and the estimate \eqref{eq:sjbd}, we obtain
\begin{nalign}\label{eq:ent_bd_a}
    \abs{\rho_{j+\frac{1}{2}}^{n+\frac{1}{2},-} -\rho_{j}^{n}} & \leq \frac{1}{2}\abs{\sigma_{j}^n}+\frac{\lambda}{2} \abs[\big]{ (f(\rho_{j+\frac{1}{2}}^{n,-},A^{n,-}_{j+\frac{1}{2}})-f(\rho_{j-\frac{1}{2}}^{n,+},A^{n,+}_{j-\frac{1}{2}})}\\
    & \leq \frac{1}{2}\abs{\sigma_{j}^n}+\frac{\lambda}{2} \abs[\big]{ (\partial_{\rho}f(\bar{\rho}_{j}^{n},A^{n,-}_{j+\frac{1}{2}})(\rho_{j+\frac{1}{2}}^{n,-}-\rho_{j-\frac{1}{2}}^{n,+})}\\ & \spc +\frac{\lambda}{2}\abs[\big]{\partial_{A}f(\rho_{j-\frac{1}{2}}^{n,+},\bar{A}^{n}_{j})(A^{n,-}_{j+\frac{1}{2}}-A^{n,+}_{j-\frac{1}{2}})}\\
    & \leq \left(\frac{1}{2}+ \frac{\lambda}{2}  \norm{\partial_{\rho}f}\right)\abs{\sigma_{j}^n} +\frac{\lambda}{2}M\abs{\rho_{j-\frac{1}{2}}^{n,+}}\abs[\big]{A^{n,-}_{j+\frac{1}{2}}-A^{n,+}_{j-\frac{1}{2}}}\\
    & \leq  \left(\frac{1}{2}+ \frac{\lambda}{2}  \norm{\partial_{\rho}f}\right)\mathcal{K}(\Delta x)^{\delta} + \left(\frac{\lambda}{2}M C(1+\theta)\theta  \norm{\mu^\prime}\norm{\rho_{0}}_{\mathrm{L}^1(\mathbb{R})}\right)\Delta x\\
    & \leq \tilde{\mathcal{K}} (\Delta x)^{\delta},
    \end{nalign}
    where $\bar{A}^{n}_{j} \in \mathcal{I}(A^{n,-}_{j+\frac{1}{2}}, A^{n,+}_{j-\frac{1}{2}})$ and $\bar{\rho}_{j}^n \in \mathcal{I}(\rho_{j+\frac{1}{2}}^{n,-}, \rho_{j-\frac{1}{2}}^{n,+})$ and $\displaystyle \tilde{\mathcal{K}}:=\left(\frac{1}{2}+ \frac{\lambda}{2}  \norm{\partial_{\rho}f}\right)\mathcal{K}+ \frac{\lambda}{2}M C(1+\theta)\theta  \norm{\mu^\prime}\norm{\rho_{0}}_{\mathrm{L}^1(\mathbb{R})}.$ Identically, we obtain    \begin{nalign}\label{eq:entropy_est2}
        \abs{\rho_{j-\frac{1}{2}}^{n+\frac{1}{2},+} -\rho_{j}^{n}} &\leq \tilde{\mathcal{K}} (\Delta x)^{\delta}. 
    \end{nalign}
    Further, we write
    \begin{nalign}\label{eq:ent_bd_b}
        \abs{A^{\nph}_{\jph}- A^{n}_{\jph}} & \leq \frac{\Delta x}{2} \sum_{\jinz} \left[\abs{\mu_{j+1-l}}\abs{\rho^{\nph,+}_{l-\frac{1}{2}}-\rho_{l}^n} + \abs{\mu_{j-l}}\abs{\rho^{\nph,-}_{l+\frac{1}{2}}-\rho_{l}^n} \right]  \\
        & \leq \tilde{\mathcal{K}} (\Delta x)^\delta \norm{\mu}L_{\mu},
    \end{nalign}
where $L_{\mu}$ denotes the length of a compact interval which contains the support of the  convolution kernel $\mu.$ Now, invoking the estimates \eqref{eq:ent_bd_a}, \eqref{eq:entropy_est2} and \eqref{eq:ent_bd_b} in \eqref{eq:ejph}, we obtain the desired result \eqref{eq:corrtermbd}
with $\displaystyle K= \left(\lambda\norm{\partial_{\rho}f} + \alpha + 2M \norm{\mu}\cblue{L_{\mu}} \right)\tilde{\mathcal{K}}.$  
\end{proof}
With the necessary estimates at hand, we are now in a position to use Theorem \ref{theorem:entropyconv} to establish the convergence of the numerical scheme to the entropy solution. We present this result in the following theorem.
\begin{theorem}\label{theorem:entropyconv}
Let $\rho_0 \in \mathrm{L}^{\infty} \cap \mathrm{BV}(\mathbb{R};\mathbb{R}_{+}).$ If the CFL condition \eqref{eq:cfl_psty} holds, then the approximate solutions $\rho_{\Delta}$ generated by the scheme \eqref{eq:mh} with the modified slopes \eqref{eq:mod_slope}, converge to the unique entropy solution of the problem \eqref{eq:problem}.    
\end{theorem}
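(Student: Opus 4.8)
The plan is to recognize the statement as a direct application of the abstract convergence result Theorem \ref{theorem:entconver}: I would rewrite the scheme \eqref{eq:mh} with the modified slopes \eqref{eq:mod_slope} in the predictor--corrector form \eqref{eq:mhpredcorr} and then verify, one by one, its three hypotheses.

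First I would address hypothesis (i) by identifying the predictor $\tilde{\rho}_j^{n+1}$ in \eqref{eq:mhpredcorr} with the first-order Lax--Friedrichs-type scheme \eqref{eq:lxf}. This is exactly the monotone scheme studied in \cite{aggarwal2015, amorim2015}, whose approximate solutions converge in $\mathrm{L}^1_{\mathrm{loc}}$ to the unique entropy solution of \eqref{eq:problem} under the CFL condition, so (i) holds. Hypothesis (ii)---the bound $\abs{e_{\jph}^{n+1}} \leq K(\Delta x)^\delta$---is precisely Lemma \ref{lemma:corrtermbd}; here the mesh-dependent argument $\mathrm{sgn}(\rho_{j+1}^n - \rho_j^n)\mathcal{K}(\Delta x)^\delta$ in \eqref{eq:mod_slope} is what forces $\abs{\sigma_j^n} = O((\Delta x)^\delta)$ and hence the required decay of the correction term.

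For hypothesis (iii) I would argue that the uniform $\mathrm{L}^\infty$, $\mathrm{BV}$, and $\mathrm{L}^1$-Lipschitz-in-time bounds of Theorems \ref{theorem:Linfty}, \ref{theorem:TVbound} and \ref{theorem:L1timecty} continue to hold for the modified scheme. The main point to check---and the only step I expect to require care---is that those proofs use the limiter only through the structural properties of Remark \ref{remark:ratioslopes}: the sign consistency $\sigma_j^n \sigma_{j+1}^n \geq 0$ and the ratio bounds \eqref{eq:ratioslope}--\eqref{eq:ratiobd}. Since appending the extra argument $\mathrm{sgn}(\rho_{j+1}^n - \rho_j^n)\mathcal{K}(\Delta x)^\delta$ to the minmod in \eqref{eq:mod_slope} can only decrease $\abs{\sigma_j^n}$ while preserving its sign (the sign of the new argument always matches that of $\rho_{j+1}^n - \rho_j^n$), Remark \ref{remark:ratioslopes} survives the modification verbatim, and with it all of positivity, $\mathrm{L}^\infty$, $\mathrm{BV}$ and time-continuity. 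With (i)--(iii) in hand, Theorem \ref{theorem:entconver} yields convergence of the modified scheme in $\mathrm{L}^1_{\mathrm{loc}}$ to the entropy solution, completing the proof.
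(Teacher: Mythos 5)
Your proposal is correct and follows essentially the same route as the paper: rewrite the scheme in the predictor--corrector form \eqref{eq:mhpredcorr}, verify hypothesis (i) via the first-order Lax--Friedrichs predictor \eqref{eq:lxf}, hypothesis (ii) via Lemma \ref{lemma:corrtermbd}, and hypothesis (iii) via Theorems \ref{theorem:Linfty}, \ref{theorem:TVbound} and \ref{theorem:L1timecty}, then invoke Theorem \ref{theorem:entconver}. Your observation that appending $\mathrm{sgn}(\rho^{n}_{j+1}-\rho^{n}_{j})\,\mathcal{K}(\Delta x)^{\delta}$ to the minmod can only shrink $\abs{\sigma_j^n}$ while preserving its sign---so that Remark \ref{remark:ratioslopes} and hence all the a priori estimates carry over verbatim---is precisely the justification the paper leaves implicit when it simply asserts that condition (iii) still holds for the modified slopes.
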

\begin{proof}
    It is sufficient to verify that the scheme \eqref{eq:mh}, with modified slopes \eqref{eq:mod_slope}, satisfies the hypotheses of Theorem \ref{theorem:entconver}. The formulation \eqref{eq:mhpredcorr} shows that the scheme can be expressed in the form \eqref{second order}. Further,  $\tilde{\rho}_{j}^{n+1}$ in \eqref{eq:mhpredcorr} is computed using the first-order Lax-Friedrichs scheme \eqref{eq:lxf}, whose convergence to the entropy solution has been established in \cite{betancourt2011, amorim2015, aggarwal2015}. This verifies hypothesis (i) of Theorem \ref{theorem:entconver}. Hypothesis (ii) holds true due to the result in  Lemma \ref{lemma:corrtermbd}. Finally, we observe that the results in Theorems \ref{theorem:Linfty},  \ref{theorem:TVbound} and  \ref{theorem:L1timecty} hold true for the scheme \eqref{eq:mh} together with the modified slopes \eqref{eq:mod_slope}, as well. This confirms the validity of condition (iii). Hence, the result is proved.
    \end{proof}

\begin{remark}
     While implementing the scheme, the slope modification \eqref{eq:mod_slope} is not really needed. This is because, for any given mesh-size $\Delta x,$ we can choose a sufficiently large constant $\mathcal{K}>0,$ so that the modified slope \eqref{eq:mod_slope} reduces to \eqref{slope-1}. Specifically, for mesh sizes $\Delta x \geq \epsilon$ for some fixed $\epsilon > 0,$ we can choose $\mathcal{K}= 2C\epsilon^{-\delta},$ where $C$ is as in Theorem \ref{theorem:Linfty}.  This has also been observed in  \cite{leroux1981} (p. 158), \cite{vila1988} (p. 68) and \cite{viallon1991} (p. 577).
\end{remark}

\begin{remark}
Note that the slope modification \eqref{eq:mod_slope} is introduced solely to ensure the entropy convergence of the scheme.  Even without the modification \eqref{eq:mod_slope}, the scheme \eqref{eq:mh} can be independently shown to converge to a weak solution \eqref{eq:weaksoln} of the problem \eqref{eq:problem}: Theorems \ref{theorem:Linfty}, \ref{theorem:TVbound} and \ref{theorem:L1timecty} allow us to apply Kolmogorov's compactness theorem (see Theorem 4.1 of \cite{gowda2023}), which guarantees the existence of a subsequence $\Delta_k \to 0$ and a function $\rho \in C([0,T]; \mathrm{L}^{1}_{\mathrm{loc}}(\mathbb{R}))$ such that $\rho_{\Delta_k}$ converges to $\rho$ in $C([0,T]; \mathrm{L}^{1}_{\mathrm{loc}}(\mathbb{R}))$. Further, using a Lax-Wendroff-type argument (see the proof of Theorem 4.2 in \cite{gowda2023}), we can show that $\rho$ is a weak solution of  the problem \eqref{eq:problem}. 
\end{remark}

\section{Numerical experiments}\label{section:numerics}

In this section, we present numerical experiments to illustrate the performance of the proposed scheme~\eqref{eq:mh}, in comparison with the first-order Lax--Friedrichs (FO) scheme ~\eqref{eq:lxf}. To further demonstrate the advantage of this particular second-order MH-type scheme, we also compare it with a standard second-order MUSCL-Runge-Kutta (RK-2) scheme  (see Section \ref{section:rkscheme}). For all schemes, we choose the time step to satisfy the CFL condition~\eqref{eq:cfl_psty} corresponding to the MH scheme, with the coefficient $\alpha$ set to $0.16$. We consider a uniform discretization of the spatial domain $I = [x_{l}, x_{r}]$ into $M$ cells of size $\Delta x = (x_r - x_l)/M$, and denote the cell averages at time $t^n$ by $\{\rho_j^n\}_{j=1}^M$.
Let $[a, b]$ be the smallest compact interval containing the support of the measure $\mu$, and define $N := (b - a)/\Delta x$.

To implement the boundary conditions, we introduce ghost cells on either side of the domain by defining the values $\rho_{-N+1}^n,\rho_{-N+2}^n, \dots, \rho_0^n,$ and $\rho_{M+1}^n, \rho_{M+2}^n, \dots, \rho_{M+N}^n,$ on the left and right of the domain, respectively. We consider two types of boundary conditions:
\begin{enumerate}
    \item {Periodic boundary conditions:}
\begin{align*}
    \rho_{M+i}^n &:= \rho_{i}^n, \quad \quad \quad \,\, \text{for }   i = 1, \dots, N,\\
    \rho_{-i}^n &:= \rho_{M - i + 1}^n, \quad \text{for } i = 0, \dots, N-1.
\end{align*}\\
\item {Absorbing boundary conditions:}
\begin{align*}
    \rho_{M+i}^n &:= \rho_{M}^n, \quad \quad \text{for } i = 1, \dots, N,\\
    \rho_{-i}^n &:= \rho_{1}^n, \quad \quad \,\, \text{for } i = 0, \dots, N-1.
\end{align*}

\end{enumerate}


\begin{example}\label{example:2}(Smooth solution case)  As studied in \cite{amorim2015}, we consider an example   of the problem \eqref{eq:problem}, where  the flux  function is given by\begin{align}\label{eq:ex1flux}
        f(\rho, A)= \rho(1-\rho)(1-A)
    \end{align} and the convolution kernel is chosen as
\begin{align}\label{eq:amorimkernel}
    \mu(x) &:= \frac{1}{\mathcal{M}}\left((x-a)(b-x)\right)^{\frac{5}{2}} \chi_{[a,b]}(x), \quad x \in \mathbb{R},
\end{align}
where $\mathcal{M}:= \int_{a}^{b}\left((x-a)(b-x)\right)^{\frac{5}{2}}\dif x$ and $[a,b] \subset \mathbb{R}.$ To evaluate the experimental order of accuracy, we consider the smooth initial condition:
\begin{nalign}\label{eq:icsmooth}
    \rho_{0}(x)= 0.5 + 0.4 \sin(\pi x)
\end{nalign}
in the domain $[-1,1]$ together with three sets of $[a,b]$ given by: $[0.0, 0.25],$ $[-0.125, 0.125]$ and $[-0.25, 0.0]$  corresponding to the  upstream, centered and downstream convolutions, respectively. We evolve the solutions imposing periodic boundary conditions up to time $t = 0.15$, for mesh sizes $\Delta x\in \{0.2, 0.1, 0.05, 0.025, 0.0125\}.$ The MH solution corresponding to the fine mesh size $\Delta x = 2/640$ is taken as the reference solution which we denote by $\rho_{ref}$. The experimental order of accuracy (E.O.A.) is then computed using the formula
\begin{nalign}
    \Theta(\Delta x)= \log_{2}\left(\frac{\norm{{\rho}_{2\Delta x} - {\rho}_{ref}}_{L^{1}}}{\norm{{\rho}_{\Delta x} - {\rho}_{ref}}_{L^{1}}}\right).
\end{nalign}
The results presented in Table \ref{table:ordermh} show that the MH scheme attains the expected order of accuracy. Further, the corresponding $\mathrm{L}^{1}$ error versus CPU time plots for the MH and RK-2 schemes are given in Figure \ref{fig:cpu_time_smooth}. While Table \ref{table:ordermh} indicates that the the RK-2 scheme attains a similar E.O.A. to that of the MH scheme, the results in Figure \ref{fig:cpu_time_smooth} show that the MH scheme is  computationally more efficient. 

\begin{table}[h!]
\centering
\begin{tabular}{|c|l|l|l|l|l|l|l|}
\hline
Kernel support $[a,b]$& & \multicolumn{2}{|c|}{FO}& \multicolumn{2}{|c|}{MH}&\multicolumn{2}{|c|}{RK-2} \\ \hline
& $\Delta x$ & $\mathrm{L}^1-$ error& $\Theta(\Delta x)$ & $\mathrm{L}^1-$ error & $\Theta(\Delta x)$ & $\mathrm{L}^1-$ error & $\Theta(\Delta x)$  \\ \cline{2-8}
\multirow{5}{*}{\begin{tabular}{@{}c@{}}$[0.0,0.25]$\\ (upstream convolution)\end{tabular}} & 0.2 &  0.206012   &  - &  0.082694   &   -   & 0.082462   &    -      \\ \cline{2-8}               
& 0.1 & 0.115288     &  0.837490 & 0.027777 &    1.573868  &   0.027355 &  1.591917  \\ \cline{2-8}       
& 0.05 &  0.063176 &  0.867773 &  0.008861  &    1.648333 &  0.008904   &     1.619287 \\ \cline{2-8}     
& 0.025 & 0.033041 &   0.935116  & 0.002471 &  1.842107 &  0.002530 &   1.815118 \\ \cline{2-8} 
& 0.0125 &  0.017026 & 0.956479 & 0.000644 &    1.939505 & 0.000657& 1.943746\\
\hline


\multirow{5}{*}{\begin{tabular}{@{}c@{}}$[-0.125, 0.125]$\\ (centered convolution)\end{tabular}} & 0.2 & 0.191868   &    -   & 0.071610  &   -  &  0.071878   &     -    \\ \cline{2-8}               
& 0.1 & 0.108584 & 0.821301 & 0.027806   & 1.364778  &  0.027328  & 1.395156  \\ \cline{2-8}       
& 0.05 &  0.058011 & 0.904406 &  0.008895 &     1.644184 & 0.008844 & 1.627536 \\ \cline{2-8}     
& 0.025 &  0.030147 &  0.944303  & 0.002527 &    1.815667 & 0.002544  & 1.797536 \\ \cline{2-8} 
& 0.0125 & 0.015438  &  0.965463 &0.000655  &  1.946496 & 0.000661  &  1.944145\\
\hline

\multirow{5}{*}{\begin{tabular}{@{}c@{}}$[-0.25, 0.0]$\\ (downstream convolution)\end{tabular}} & 0.2 &  0.179211    &   -  &  0.070700     &  -    & 0.070413   &   - \\ \cline{2-8}               
& 0.1 & 0.102061& 0.812219 &  0.026216   &  1.431257  &  0.025736 &   1.452034  \\ \cline{2-8}       
& 0.05 & 0.054620 & 0.901929 &  0.008712  &   1.589371 & 0.008626   &  1.576991 \\ \cline{2-8}     
& 0.025 & 0.028217  &   0.952864  & 0.002590&  1.749549 & 0.002600  &  1.729973 \\ \cline{2-8} 
& 0.0125 &  0.014402  & 0.970304 & 0.000696  & 1.894568 &  0.000699 &   1.893973\\
\hline
\end{tabular}
\vspace{0.3cm} 
\caption{Example \ref{example:2}. $\mathrm{L}^{1}-$ errors and E.O.A. obtained for FO, MH and RK-2 schemes, with $\Delta t  = \Delta x/20,$ computed at time $t = 0.15.$}
\label{table:ordermh}
\end{table}


\begin{figure}[!tbp]
  \centering
  \begin{subfigure}[b]{0.32\textwidth}
    \includegraphics[width=\textwidth]{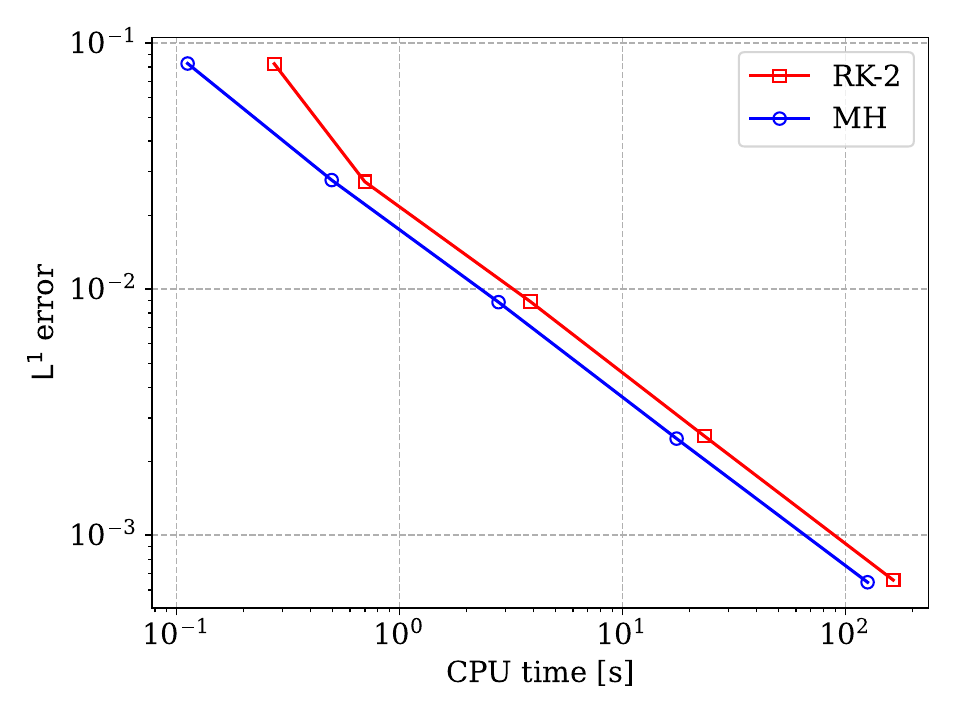}
    \caption*{(a)}
  \end{subfigure}
  \hfill
  \begin{subfigure}[b]{0.32\textwidth}
    \includegraphics[width=\textwidth]{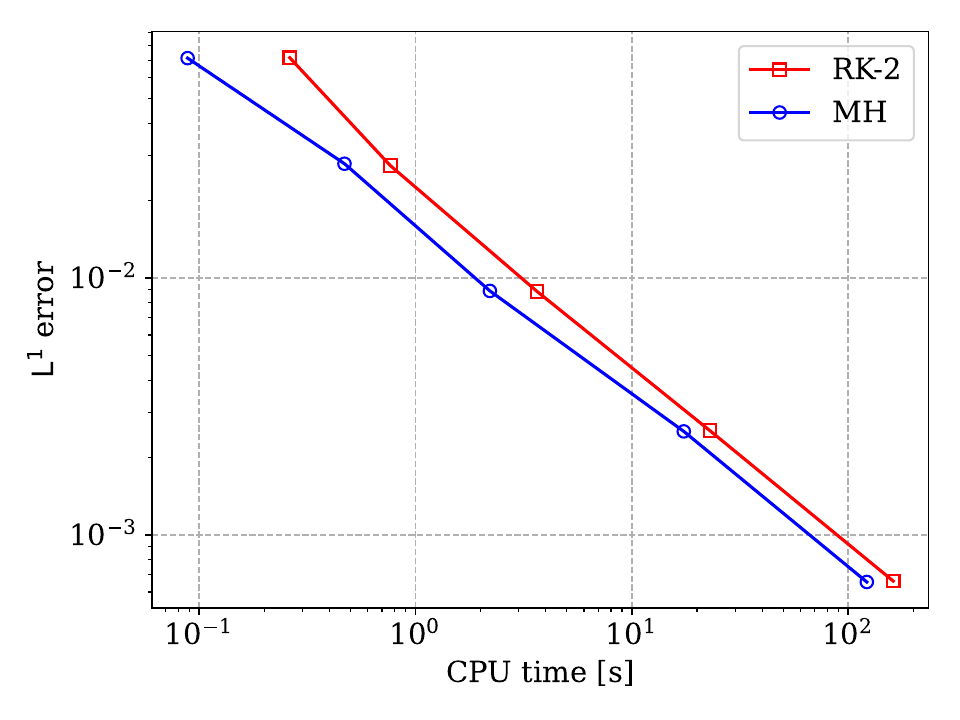}
    \caption*{(b)}
  \end{subfigure}
  \hfill
  \begin{subfigure}[b]{0.32\textwidth}
    \includegraphics[width=\textwidth]{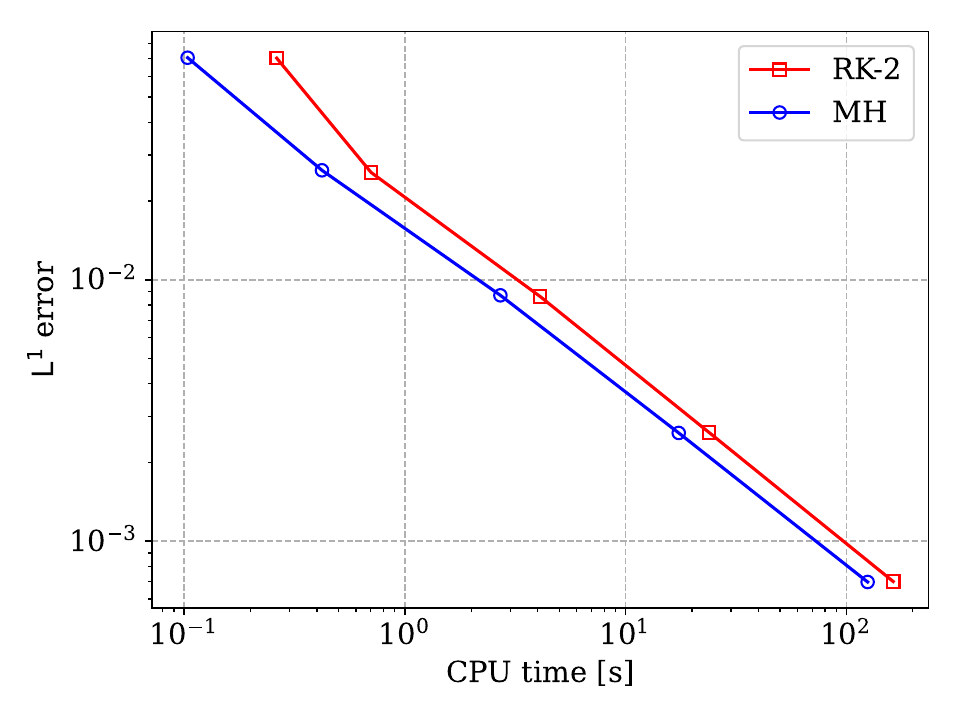}
    \caption*{(c)}
  \end{subfigure}

  \caption{Example \ref{example:2}. Log–log plots of $\mathrm{L}^{1}$ error versus CPU time for the MH and RK-2 schemes applied to \eqref{eq:problem} with the smooth initial condition \eqref{eq:icsmooth} at time $t = 0.15$. Results are displayed for three different choices of the interval $[a,b]$ in  \eqref{eq:amorimkernel}: (a) $[0.0, 0.25]$ (upstream convolution), (b) $[ -0.125,0.125]$ (centered convolution), and (c) $[-0.25,0.0]$ (downstream convolution).}
  \label{fig:cpu_time_smooth}
\end{figure}

\end{example}

\begin{example}\label{example:1}
    We consider the same setup as in Example \ref{example:1} but with a discontinuous initial datum 
\begin{align}\label{eq:icamorim}
    \rho_{0}(x)= \frac{1}{2}\chi_{[-2.8,-1.8]}(x) + \frac{3}{4}\chi_{[-1.2,-0.2]}(x) +\frac{3}{4}\chi_{[0.6,1.0]}(x) + \chi_{[1.5,+\infty)}(x)
\end{align} and $[a,b]=[-0.25, 0.0]$ (downstream convolution).
The numerical solutions are computed in the domain $[-3.0, 3.0]$ up to time $t=2.5$ using absorbing boundary conditions and the results are displayed in Figure \ref{fig:ex1}. Here, the reference solution is computed using the MH scheme with a fine mesh of size $\Delta x = {6}/{900}$ . While  both the MH and RK-2 schemes provide a comparable resolution as seen in Figure \ref{fig:ex1}, the $\mathrm{L}^{1}$ error versus CPU time plots presented in Figure \ref{fig:disc_cpu}(a) , computed for mesh sizes $\Delta x \in \{{6}/{40}, {6}/{80}, {6}/{160}, {6}/{320}\},$ indicate that the MH scheme is  computationally more efficient.
\begin{figure}[!tbp]
  \centering
  \begin{subfigure}[t]{0.49\textwidth}
    \includegraphics[width=\textwidth]{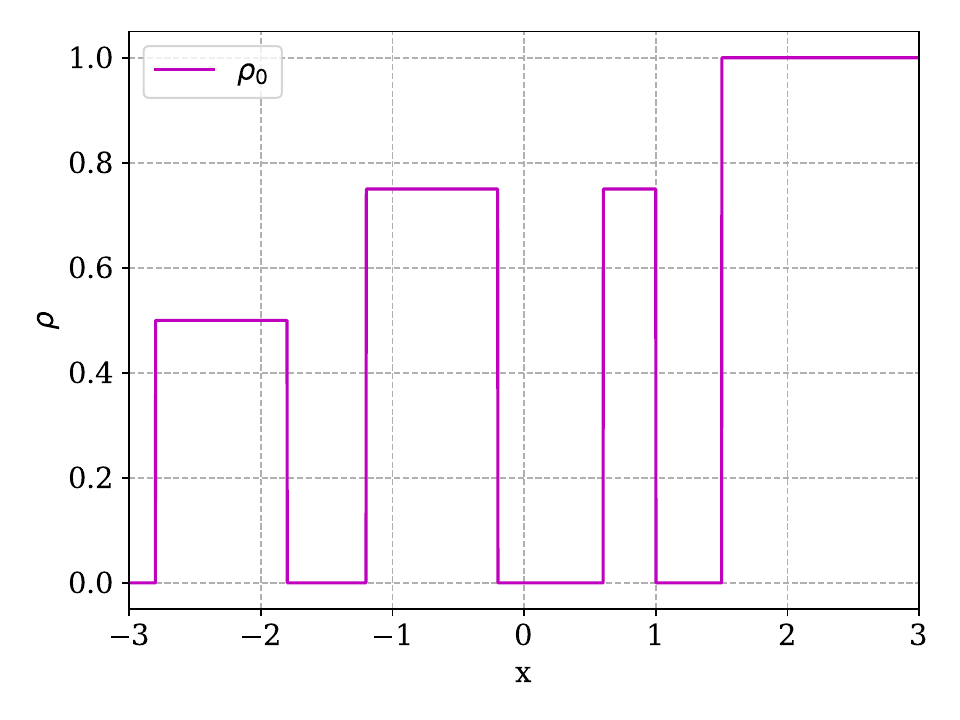}
    \caption*{(a)}
  \end{subfigure}
  \hfill
  \begin{subfigure}[t]{0.49\textwidth}
    \includegraphics[width=\textwidth]{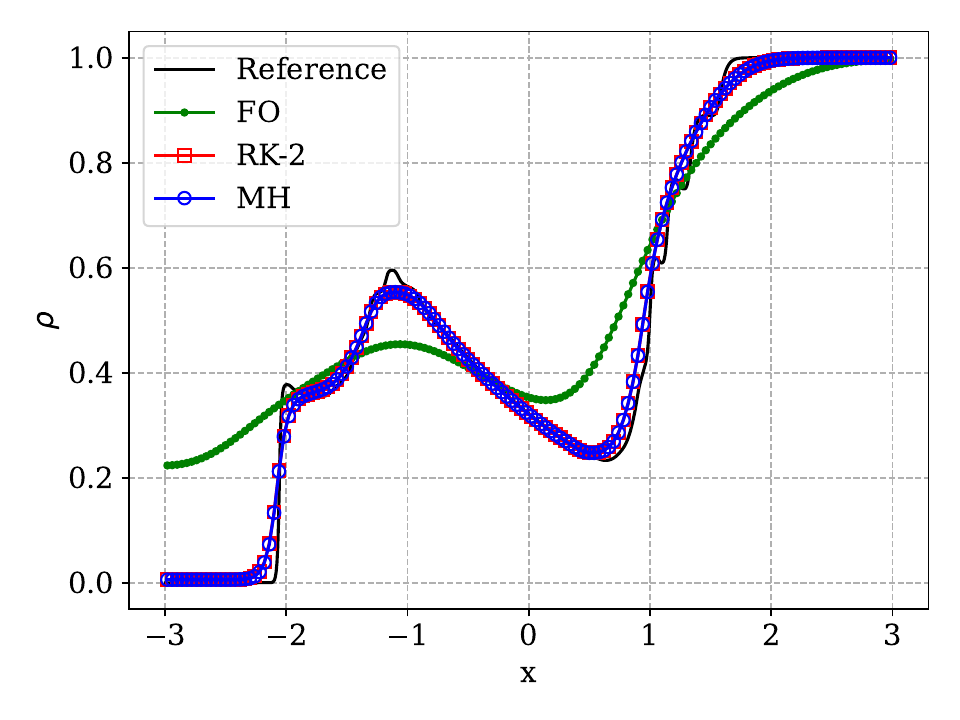}
    \caption*{(b)}
  \end{subfigure}

  \caption{Example \ref{example:1}. (a) Initial datum given in \eqref{eq:icamorim}. (b) Numerical solutions at time $t=2.5$, computed with the kernel function \eqref{eq:amorimkernel} with  $[a, b]=[-0.25,0.0]$  using $\Delta x = 6/150$ and $\Delta t = \Delta x/20.$ } 
  \label{fig:ex1}
\end{figure}
\end{example}

\begin{example}\label{example:3}
In this example, we consider the problem \eqref{eq:problem} with a different flux function given by
\begin{align}
    f(\rho, A) &= \rho(1-A),
\end{align}
and the kernel function $\mu$ given by
\begin{align}\label{eq:aggarwalkernel}
    \mu(x) &:= \frac{1}{\left(\int_{-\eta}^{0}(-x(\eta+x))^{3} \dif x\right)}(-x(\eta+x))^{3} \chi_{[-\eta,0]}, 
\end{align}  considered in \cite{aggarwal_holden_vaidya2024}.
We choose a discontinuous initial datum
\begin{align}\label{eq:ic_aggarwal}
    \rho_0(x) &= \begin{cases}
        0.25 \quad \mbox{for} \,\, -0.9 \leq x \leq 0.1,\\
        0.5 \quad \,\,\, \mbox{for} \,\, 0.1 \leq x \leq 0.3
    \end{cases}
\end{align}
and evolve the numerical solutions in the domain $[-1.5,1.5]$ up to time $t=0.5$ using absorbing boundary conditions and the results are illustrated in Figure \ref{fig:ex3}. Here, the reference solution is computed using the MH scheme with a mesh of size $\Delta x =3/900.$ Additionally, we present the $\mathrm{L}^{1}$ error versus CPU time plots in Figure \ref{fig:disc_cpu}(b), computed for mesh sizes $\Delta x \in \{{6}/{40}, {6}/{80}, {6}/{160}, {6}/{320}\}.$ 
The comparison in Figure \ref{fig:ex3} shows that the MH scheme (and RK-2) outperforms the FO scheme, as expected. As in Example \ref{example:1}, although the MH and RK-2 schemes produce comparable solutions, Figure \ref{fig:disc_cpu}(b) indicates that the MH scheme  is  computationally  more  efficient.

\begin{figure}[!tbp]
  \centering
  \begin{subfigure}[t]{0.49\textwidth}
    \includegraphics[width=\textwidth]{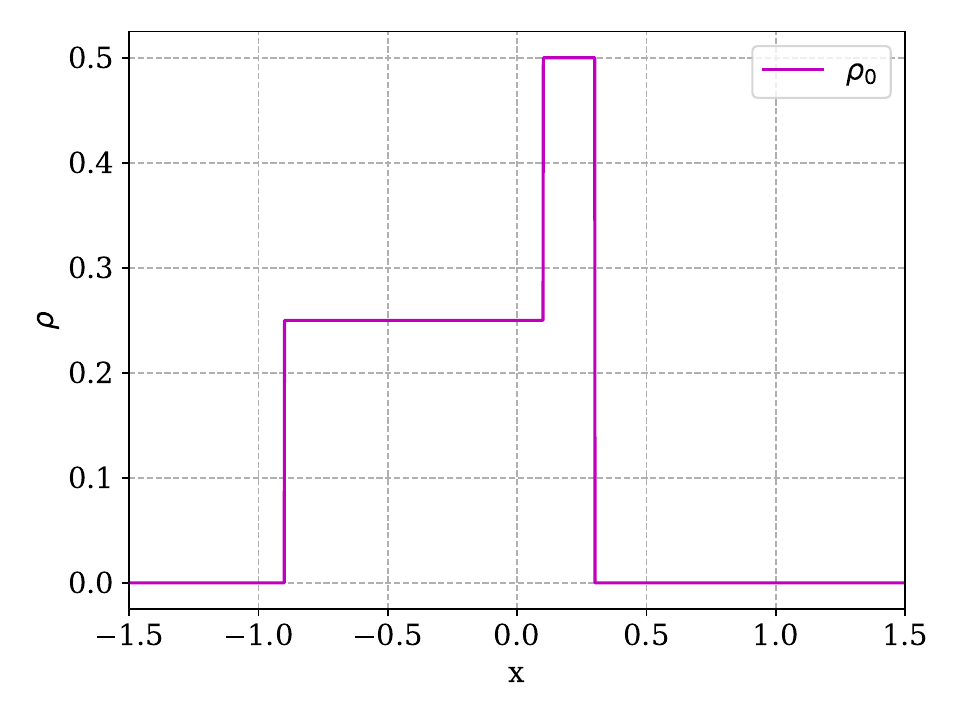}
    \caption*{(a)}
  \end{subfigure}
  \hfill
  \begin{subfigure}[t]{0.49\textwidth}
    \includegraphics[width=\textwidth]{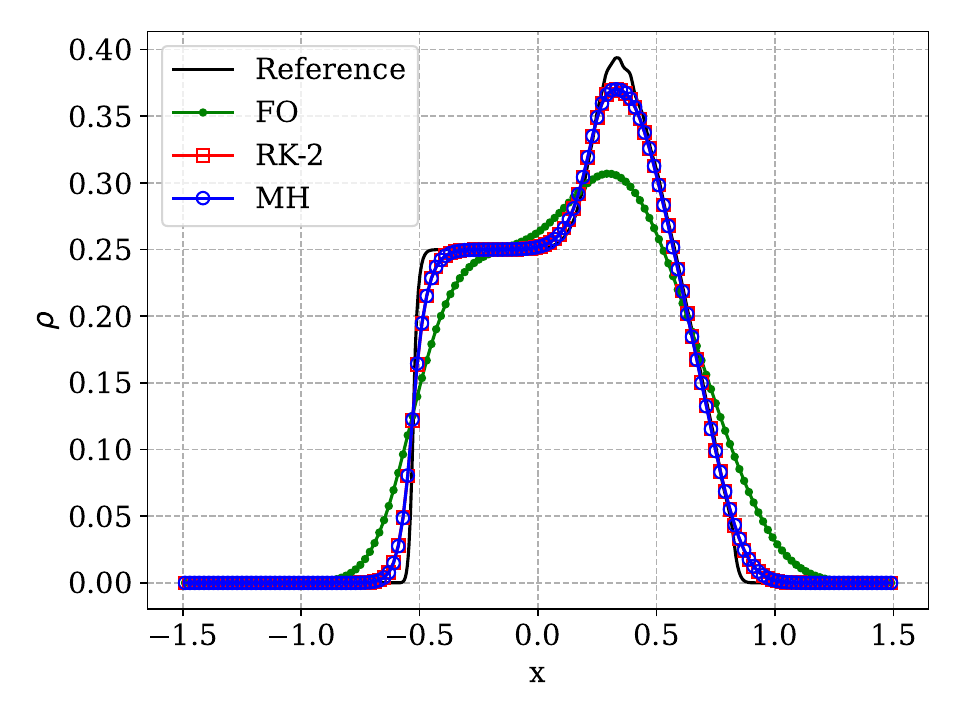}
    \caption*{(b) }
  \end{subfigure}

  \caption{Example \ref{example:3}. (a) Initial datum given in \eqref{eq:ic_aggarwal}. (b) Numerical solutions at time $t=0.5,$ computed with the kernel function \eqref{eq:aggarwalkernel}, using $\Delta x = 3/150$ and $\Delta t =\Delta x/20.$ }
  \label{fig:ex3}
\end{figure}

\begin{figure}[!tbp]
  \begin{subfigure}[t]{0.49\textwidth}
    \includegraphics[width=\textwidth]{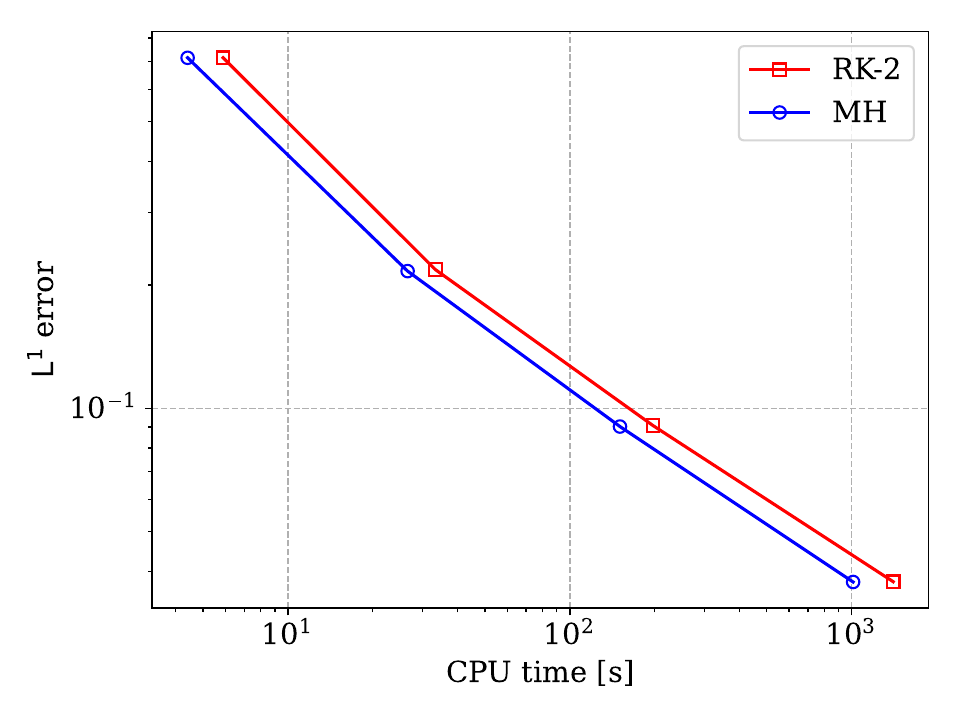}
    \caption*{(a) }
  \end{subfigure}
  \hfill
  \begin{subfigure}[t]{0.49\textwidth}
    \includegraphics[width=\textwidth]{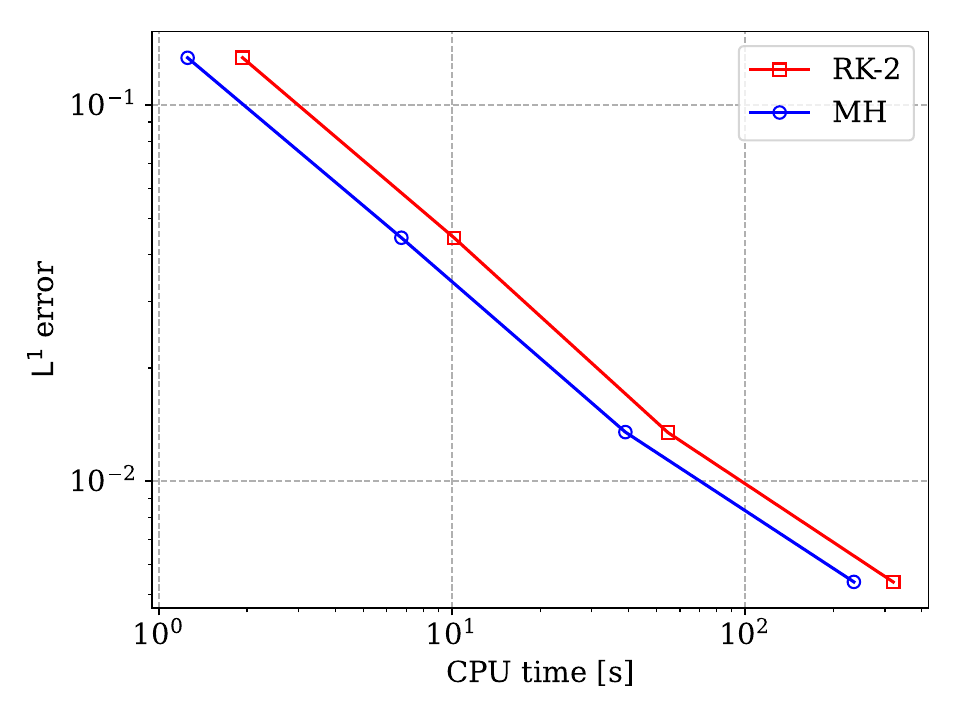}
    \caption*{(b)}
  \end{subfigure}

  \caption{Examples \ref{example:1} and \ref{example:3}. Log–log plots of $\mathrm{L}^{1}$ error versus CPU time for the MH and RK-2 schemes for discontinuous solutions. (a) Example \ref{example:1} and (b) Example \ref{example:3}.} 
\label{fig:disc_cpu}

\end{figure}    
\end{example}





\section{Conclusion}

In this work, we have proposed a single-stage MUSCL–Hancock-type second-order scheme for a general class of non-local conservation laws and established its convergence analysis. The construction of the scheme relies on a careful design of the discrete convolutions, which is crucial not only for achieving second-order accuracy but also for enabling a rigorous convergence analysis. The  numerical results presented in Section~\ref{section:numerics} confirm that the proposed scheme produces numerical solution of  significantly improved accuracy than a typical first-order method.
 To highlight the advantage of the proposed MH-type scheme, we also compare it with a standard second-order MUSCL–RK-2 method. For the test cases in Examples \ref{example:1} and \ref{example:3}, both the MH and RK-2 schemes produce comparable solutions. However, the results shown in Figures~\ref{fig:disc_cpu}(a) and~\ref{fig:disc_cpu}(b) reveal that the MH scheme is computationally more efficient than the RK-2 scheme, underscoring the significance of the proposed method.
We note that this work opens several avenues for further exploration, including the extension to multidimensional problems and the development of a corresponding convergence analysis. We postpone these investigations to future work.

\section*{Acknowledgments} Nikhil Manoj acknowledges financial support in the form of a doctoral fellowship from the Council of Scientific and Industrial Research (CSIR), Government of India.

\appendix

\section{Technical estimates for the total variation bound}\label{section:tvbappendix}
In this section, we derive certain estimates required in the proof of Theorem \ref{theorem:TVbound}.
\subsection{Bound on $\displaystyle \sum_{j \in \mathbb{Z}} \abs{\tilde{C}_{\jph}^{n}} $}\label{subsec:tilde_k_l_bds}
In order to estimate the term $\displaystyle \sum_{j \in \mathbb{Z}} \abs{\tilde{C}_{\jph}^{n}},$ we first show that
$\displaystyle 0 \leq \tilde{k}_{\jph}^n, \tilde{\ell}_{\jph}^n \leq \frac{1}{2},$ for $\jinz.$
Using Remark \ref{remark:ratioslopes} (equation \eqref{eq:ratiobd}) and the CFL condition \eqref{eq:cfl_psty},  it is straightforward to see that
\begin{nalign}\label{eq:ktildelbd}
& \left(\frac{\rho_{j+\frac{3}{2}}^{n,-}-\rho_{j+\frac{1}{2}}^{n,-}}{\rho_{j+1}^{n}-\rho_{j}^{n}}\right)\left(1-\frac{\lambda}{2}\partial_{\rho}f(\bar{\rho}_{j+1}^{n,-}, A_{j+\frac{3}{2}}^{n,-})\right)+ \left(\frac{\rho_{\jph}^{n,+}- \rho_{\jmh}^{n,+}}{\rho_{j+1}^{n}-\rho_{j}^{n}}\right)\left(\frac{\lambda}{2}\partial_{\rho}f(\bar{\rho}^{n,+}_{j}, A_{\jph}^{n,+})\right)\\
& \spc \geq \frac{1}{2}\left(1-\frac{\lambda}{2}\norm{\partial_{\rho}f}\right) - (1+\theta)\frac{\lambda}{2}\norm{\partial_{\rho}f} \geq 0.
\end{nalign}
Further, from the CFL condition \eqref{eq:cfl_psty}, it follows that $\displaystyle\frac{1}{2}\left(\lambda\partial_{\rho}f(\bar{\rho}_{\jpo}^{\nph,-},A_{j+\frac{3}{2}}^{\nph})+\alpha\right) \geq 0,$
which together with \eqref{eq:ktildelbd} implies that $\tilde{k}_{\jph}^n \geq 0.$ Using the CFL condition \eqref{eq:cfl_psty} once again, we obtain 
\begin{align}
    \tilde{k}_{\jph}^n &\leq  \frac{1}{2}\left(\lambda\norm{\partial_{\rho}f}+\alpha\right)
 (1+\theta)\left(1+{\lambda}\norm{\partial_{\rho}f}\right) \leq \frac{1}{2} \times \frac{8}{27} \times \frac{3}{2} \times \frac{29}{27} \leq \frac{1}{2},
 \end{align}
thereby concluding that  $0 \leq \tilde{k}_{\jph}^n \leq \frac{1}{2},$ for all $\jinz.$ Analogously, we 
obtain a bound $0 \leq \tilde{\ell}_{\jph}^n \leq \frac{1}{2}.$
\par
Since $0 \leq \tilde{k}_{\jph}^n, \tilde{\ell}_{\jph}^n \leq \frac{1}{2}$ for $\jinz,$
from \eqref{eq:tildehatCs} we obtain the desired estimate
\begin{align}\label{eq:tildeCbd}
    \sum_{j \in \mathbb{Z}} \abs{\tilde{C}_{\jph}^{n}} &\leq \sum_{j \in \mathbb{Z}} \abs{\rho_{j+1}^n- \rho_{j}^n}.
\end{align}
\subsection{Bound on $\displaystyle \sum_{\jinz}\abs{\hat{C}_{\jph}^n},$}\label{subsec:Cjphsumbd}
Recall from \eqref{eq:tildehatCs} that 
\begin{align*}
    \hat C_\jph =  -\hat{\ell}_{j+\frac{1}{2}}^n-\hat{k}_{j+\frac{1}{2}}^n +
\hat{\ell}_{j+\frac{3}{2}}^n +\hat{k}_{j-\frac{1}{2}}^n.
\end{align*}
To analyze this term, we first focus on the  difference $\hat{\ell}_{j+\frac{1}{2}}^n - \hat{\ell}_{j-\frac{1}{2}}^n.$  Upon expanding it using the definition in \eqref{eq:ltildehat} and subsequently rearranging the terms, we obtain\begin{nalign}\label{eq:hatldif}
     \hat{\ell}_{j+\frac{1}{2}}^n - \hat{\ell}_{j-\frac{1}{2}}^n 
     & = \mathcal{L}^1_{j} +  \mathcal{L}^2_{j},
 \end{nalign}


 where 
 \begin{nalign}\label{eq:L1L2}
     \mathcal{L}^1_{j} &:= \frac{1}{2}\left(\alpha-\lambda\partial_{\rho}f(\bar{\rho}_{j}^{\nph,+},A_{j+\frac{1}{2}}^{\nph})\right) \left[- \frac{\lambda}{2}\partial_{A}f(\rho_{\jph}^{n,-}, \bar{A}_{j+1}^{n,-})  \left(A_{j+\frac{3}{2}}^{n,-}-A_{\jph}^{n,-}\right)\right] \\ 
     & \spc -\frac{1}{2}\left(\alpha-\lambda\partial_{\rho}f(\bar{\rho}_{j-1}^{\nph,+},A_{j-\frac{1}{2}}^{\nph})\right) \left[- \frac{\lambda}{2}\partial_{A}f(\rho_{\jmh}^{n,-}, \bar{A}_{j}^{n,-})  \left(A_{j+\frac{1}{2}}^{n,-}-A_{\jmh}^{n,-}\right)\right],\\
     \mathcal{L}^{j}_2 &:= \frac{1}{2}\left(\alpha-\lambda\partial_{\rho}f(\bar{\rho}_{j}^{\nph,+},A_{j+\frac{1}{2}}^{\nph})\right) \left[ \frac{\lambda}{2}\partial_{A}f(\rho_{\jmh}^{n,+}, \bar{A}_{j}^{n,+})  \left(A_{\jph}^{n,+}-A_{\jmh}^{n,+}\right)\right] \\ 
     & \spc -\frac{1}{2}\left(\alpha-\lambda\partial_{\rho}f(\bar{\rho}_{j-1}^{\nph,+},A_{j-\frac{1}{2}}^{\nph})\right) \left[ \frac{\lambda}{2}\partial_{A}f(\rho_{\jmtbt}^{n,+}, \bar{A}_{j-1}^{n,+})  \left(A_{\jph}^{n,+}-A_{\jmtbt}^{n,+}\right)\right].
\end{nalign}
Turning to the term  $\mathcal{L}^1_{j},$ by adding and subtracting suitable terms, we write  
\begin{nalign}\label{eq:L1exp}
    \mathcal{L}^1_{j}  &= \frac{1}{2}\left(\alpha-\lambda\partial_{\rho}f(\bar{\rho}_{j}^{\nph,+},A_{j+\frac{1}{2}}^{\nph})\right) \left(- \frac{\lambda}{2}\partial_{A}f(\rho_{\jph}^{n,-}, \bar{A}_{j+1}^{n,-})\right)  \left(A_{j+\frac{3}{2}}^{n,-}-2A_{\jph}^{n,-}+A_{\jmh}^{n,-}\right)\\ & \spc +\left(A_{j+\frac{1}{2}}^{n,-}-A_{\jmh}^{n,-}\right) \left[\frac{1}{2}\left(\alpha-\lambda\partial_{\rho}f(\bar{\rho}_{j}^{\nph,+},A_{j+\frac{1}{2}}^{\nph})\right) \left(- \frac{\lambda}{2}\partial_{A}f(\rho_{\jph}^{n,-}, \bar{A}_{j+1}^{n,-})\right) \right. \\
    &\spc \spc \left. -\frac{1}{2}\left(\alpha-\lambda\partial_{\rho}f(\bar{\rho}_{j-1}^{\nph,+},A_{j-\frac{1}{2}}^{\nph})\right) \left(- \frac{\lambda}{2}\partial_{A}f(\rho_{\jmh}^{n,-}, \bar{A}_{j}^{n,-})  \right) \right].
\end{nalign}
Further, appropriately adding and subtracting the terms $\mathcal{T}_{1} := \partial_{A}f(\rho_{\jph}^{n,-}, \bar{A}_{j}^{n,-}),$ $\mathcal{T}_{2} :=  \partial_{\rho}f(\bar{\rho}_{j}^{\nph,+},A_{j-\frac{1}{2}}^{\nph})$ and $\mathcal{T}_{3} := \partial_{A}f(\rho_{\jmh}^{n,-}, \bar{A}_{j}^{n,-})$ to \eqref{eq:L1exp}, we obtain
\begin{nalign}
     \mathcal{L}^1_{j} &=  \frac{1}{2}\left(\alpha-\lambda\partial_{\rho}f(\bar{\rho}_{j}^{\nph,+},A_{j+\frac{1}{2}}^{\nph})\right) \left(- \frac{\lambda}{2}\partial_{A}f(\rho_{\jph}^{n,-}, \bar{A}_{j+1}^{n,-})\right)  \left(A_{j+\frac{3}{2}}^{n,-}-2A_{\jph}^{n,-}+A_{\jmh}^{n,-}\right)\\ & \spc +\left(A_{j+\frac{1}{2}}^{n,-}-A_{\jmh}^{n,-}\right) \frac{1}{2}\left(\alpha-\lambda\partial_{\rho}f(\bar{\rho}_{j}^{\nph,+},A_{j+\frac{1}{2}}^{\nph})\right) \frac{\lambda}{2}\left(- \partial_{A}f(\rho_{\jph}^{n,-}, \bar{A}_{j+1}^{n,-})+ \mathcal{T}_{1}-   \mathcal{T}_{1} + \mathcal{T}_{3} \right)  \\
    &\spc  + \left(A_{j+\frac{1}{2}}^{n,-}-A_{\jmh}^{n,-}\right) \left(- \frac{\lambda}{2}\mathcal{T}_{3}  \right) \frac{1}{2}\left[ \left(\alpha-\lambda\partial_{\rho}f(\bar{\rho}_{j}^{\nph,+},A_{j+\frac{1}{2}}^{\nph})\right) \right.\\ 
    & \spc \spc \left. + \lambda\mathcal{T}_{2}  -\lambda\mathcal{T}_{2} - \left(\alpha-\lambda\partial_{\rho}f(\bar{\rho}_{j-1}^{\nph,+},A_{j-\frac{1}{2}}^{\nph})\right) \right], 
\end{nalign}
which we split upon applying the mean value theorem as
\begin{nalign}
   \mathcal{L}^{1}_j & = \mathcal{L}_j^{a} + \mathcal{L}_j^{b} + \mathcal{L}_j^{c} + \mathcal{L}_j^{d},
\end{nalign}
where
\begin{nalign}\label{eq:L1abcd}
    \mathcal{L}_j^{a}& :=\frac{1}{2}\left(\alpha-\lambda\partial_{\rho}f(\bar{\rho}_{j}^{\nph,+},A_{j+\frac{1}{2}}^{\nph})\right) \left(- \frac{\lambda}{2}\partial_{A}f(\rho_{\jph}^{n,-}, \bar{A}_{j+1}^{n,-})\right)  \left(A_{j+\frac{3}{2}}^{n,-}-2A_{\jph}^{n,-}+A_{\jmh}^{n,-}\right),\\
    \mathcal{L}_j^{b} &:= \left(A_{j+\frac{1}{2}}^{n,-}-A_{\jmh}^{n,-}\right) \frac{1}{2}\left(\alpha-\lambda\partial_{\rho}f(\bar{\rho}_{j}^{\nph,+},A_{j+\frac{1}{2}}^{\nph})\right) \frac{\lambda}{2}\left(\partial_{\rho A}^{2}f(\bar{\rho}_{j}^{n,-}, \bar{A}_{j}^{n,-})(\rho_{\jmh}^{n,-}-\rho_{\jph}^{n,-}) \right.\\ & \spc \spc + \left.\partial_{AA}^{2}f(\rho_{\jph}^{n,-}, \bar{A}_{\jph}^{n,-})(\bar{A}_{j}^{n,-}- \bar{A}_{j+1}^{n,-})\right),\\
    \mathcal{L}_j^{c} &:= \left(A_{j+\frac{1}{2}}^{n,-}-A_{\jmh}^{n,-}\right) \left(- \frac{\lambda}{2}\mathcal{T}_3 \right)\frac{\lambda}{2}\partial_{\rho\rho}^{2}f(\bar{\bar{\rho}}_{\jmh}^{\nph,+}, A_{\jmh}^\nph)(\bar{\rho}_{j-1}^{\nph,+}- \bar{\rho}_{j}^{\nph,+}),\\
   \mathcal{L}_j^{d} &:=  \left(A_{j+\frac{1}{2}}^{n,-}-A_{\jmh}^{n,-}\right) \left(- \frac{\lambda}{2}\mathcal{T}_3\right)\frac{\lambda}{2}\partial_{\rho A}^{2}f(\bar{\rho}_{j}^{\nph,+},\bar{A}_{j}^{\nph})(A_{\jmh}^{\nph}-A_{\jph}^{\nph}),
\end{nalign}
where $\bar{\rho}_{j}^{\nph,\pm} \in \mathcal{I}({\rho}_{\jmh}^{\nph,\pm}, {\rho}_{\jph}^{\nph,\pm}), \bar{\bar{\rho}}_{\jmh}^{\nph,+} \in \mathcal{I}(\bar{\rho}_{j-1}^{\nph,+}, \bar{\rho}_{j}^{\nph,+})$ and $ \bar{A}_{j}^{n,-} \in \mathcal{I}({A}_{\jmh}^{n,-},{A}_{\jph}^{n,-}),$ for $\jinz.$
\par
Next, expanding $
    A_{j+\frac{3}{2}}^{n,-}-2A_{\jph}^{n,-}+A_{\jmh}^{n,-} = \left(A_{j+1}^{n}-2A_{j}^{n}+A_{j-1}^{n}\right) + \frac{1}{2}\left(s_{j+1}^n -2s_{j}^n+ s_{j-1}^n\right),$
and using the definition of the discrete convolution from \eqref{eq:conv_appr} together with Theorems \ref{theorem:positivity} and \ref{theorem:L1stability}, we obtain
\begin{nalign}\label{eq:conv_3_dif}
     \abs{A_{j+1}^{n}-2A_{j}^{n}+A_{j-1}^{n}} & =\abs{\Delta x  \sum_{\jinz}(\mu_{j+1-l}- 2\mu_{j-l}+ \mu_{j-1-l})\rho_{l}^n}\\
   & = \Delta x^3\abs{\sum_{\jinz}\mu^{\prime\prime}(\bar{x}_{j-l})\rho_{l}^n} \leq \Delta x^2 \norm{\mu^{\prime\prime}}\norm{\rho_{0}}_{\mathrm{L}^1(\mathbb{R})},
\end{nalign}
for some $\bar{x}_{j-l} \in (x_{j-l-1}, x_{j-l+1}).$
Further, noticing that
\begin{align*}
    \frac{1}{2}\left(s_{j+1}^n -2s_{j}^n+ s_{j-1}^n\right) &= \frac{1}{2}\theta \left[(A_{j+2}^n - A_{j+1}^n)- (A_{j+1}^n - A_{j}^n)-(A_{j}^n - A_{j-1}^n)+ (A_{j-1}^n - A_{j-2}^n)\right]  \\ & = \frac{1}{2}\theta \left[(A_{j+2}^n - 2A_{j+1}^n + A_{j}^n)-(A_{j}^n - 2A_{j-1}^n+ A_{j-2}^n)\right], 
\end{align*}
 yields
\begin{align}\label{eq:slope_3_dif}
    \frac{1}{2}\abs{s_{j+1}^n -2s_{j}^n+ s_{j-1}^n} & \leq \theta \Delta x^2 \norm{\mu^{\prime\prime}}\norm{\rho_{0}}_{\mathrm{L}^1(\mathbb{R})}.   
\end{align}
In view of \eqref{eq:slope_3_dif} and \eqref{eq:conv_3_dif}, we have the estimate
\begin{align}\label{eq:leftconv_3_bd}
\abs{A_{j+\frac{3}{2}}^{n,-}-2A_{\jph}^{n,-}+A_{\jmh}^{n,-}} & \leq (1+\theta)\Delta x^2 \norm{\mu^{\prime\prime}}\norm{\rho_{0}}_{\mathrm{L}^1(\mathbb{R})}.   
\end{align}

Now, applying \eqref{eq:leftconv_3_bd} together with Lemma \ref{lemma:reconvaluebd} and hypotheses \eqref{hyp:H1} and \eqref{hyp:H2}, we obtain
\begin{nalign}\label{eq:L1a_bd}
    \sum_{\jinz}\abs{\mathcal{L}_j^{a}} &\leq \frac{1}{2}\left(\alpha+ \lambda \norm{\partial_{\rho}f}\right)\frac{\lambda}{2}M(1+\theta)\Delta x^2 \norm{\mu^{\prime\prime}}\norm{\rho_0}_{\mathrm{L}^1(\mathbb{R})} \sum_{\jinz}\abs{\rho_{\jph}^{n,-}}\\ &
    \leq \frac{1}{2}\left(\alpha+ \lambda \norm{\partial_{\rho}f}\right)\frac{\lambda}{2}M(1+\theta)^{2}\Delta x \norm{\mu^{\prime\prime}}\norm{\rho_0}_{\mathrm{L}^1(\mathbb{R})} \Delta x \sum_{\jinz} \rho_{j}^{n} \\
    & \leq \Delta t \frac{1}{4}\left(\alpha+ \lambda \norm{\partial_{\rho}f}\right) M(1+\theta)^{2} \norm{\mu^{\prime\prime}}\norm{\rho_0}_{\mathrm{L}^1(\mathbb{R})}^{2}.
\end{nalign}

Moving to the estimation of the sum $\displaystyle \sum_{\jinz}\abs{\mathcal{L}_{j}^{b}},$   we bound the difference $\bar{A}^{n,-}_{j}- \bar{A}^{n,-}_{j+1}$ using the estimate \eqref{eq:lrvaluediff_bd} as 
\begin{nalign}\label{eq:barAdiff}
    \abs{\bar{A}^{n,-}_{j}- \bar{A}^{n,-}_{j+1}} & = \abs{\gamma_{1}A_{\jmh}^{n,-}+ (1-\gamma_1)A_{\jph}^{n,-}-(1-\gamma_{2})A^{n,-}_{\jph}-\gamma_{2} A_{j+\frac{3}{2}}^{n,-}}\\
    & \leq \gamma_{1}\abs{A_{\jmh}^{n,-}-A_{\jph}^{n,-}}+ \gamma_{2}\abs{A^{n,-}_{\jph}- A_{j+\frac{3}{2}}^{n,-}}\\
    & \leq 2(1+\theta)\Delta x \norm{\mu^{\prime}}\norm{\rho_{0}}_{\mathrm{L}^{1}(\mathbb{R})},
\end{nalign} for some $\gamma_1, \gamma_2 \in (0,1).$
Now, invoking \eqref{eq:lrvaluediff_bd}, hypothesis \eqref{hyp:H2}, property \eqref{eq:tv_recomstruction}, \eqref{eq:barAdiff} and Lemma \ref{lemma:reconvaluebd}, we obtain
\begin{nalign}\label{eq:L1b_bd}
\sum_{\jinz}\abs{\mathcal{L}_j^{b}} & \leq  (1+\theta)\Delta x \norm{\mu^{\prime}}\norm{\rho_{0}}_{\mathrm{L}^{1}(\mathbb{R})}  \frac{1}{4}(\alpha+\lambda\norm{\partial_{\rho}f}) \lambda\left(\norm{\partial_{\rho A}^{2}f}\sum_{\jinz}\abs{\rho_{\jmh}^{n,-}-\rho_{\jph}^{n,-}}+ M\sum_{\jinz}\abs{\bar{A}^{n,-}_{j}- \bar{A}^{n,-}_{j+1}}\abs{\rho^{n,-}_{\jph}}\right)\\
& \leq \Delta t (1+\theta)\norm{\mu^{\prime}}\norm{\rho_{0}}_{\mathrm{L}^{1}(\mathbb{R})}  \frac{1}{4}(\alpha+\lambda\norm{\partial_{\rho}f}) \left(\norm{\partial_{\rho A}^{2}f} \sum_{\jinz}\abs{\rho^n_{j} - \rho_{j-1}^n}+ 2M(1+\theta)^{2}\norm{\mu^\prime}\norm{\rho_0}^{2}_{\mathrm{L}^{1}(\mathbb{R})}\right).
\end{nalign}

Next, aiming to estimate the sum $\displaystyle \sum_{\jinz}\abs{\mathcal{L}_j^{c}},$ we first consider the difference $\bar{\rho}_{j-1}^{\nph,+}- \bar{\rho}_{j}^{\nph,+}$ and write
\begin{nalign}\label{eq:rhobarmidtimediff}
    \bar{\rho}_{j}^{\nph,+}- \bar{\rho}_{j-1}^{\nph,+} & = \tilde{\gamma}_{1} \rho_{\jph}^{\nph,+} + (1-\tilde{\gamma}_{1})\rho_{\jmh}^{\nph,+} - \tilde{\gamma_{2}}\rho_{\jmh}^{\nph,+} - (1-\tilde{\gamma_{2}})\rho_{\jmtbt}^{\nph,+}\\
    & = \tilde{\gamma}_{1}(\rho_{\jph}^{\nph,+}- \rho_{\jmh}^{\nph,+}) +  (1- \tilde{\gamma_{2}})(\rho_{\jmh}^{\nph,+}- \rho_{\jmtbt}^{\nph,+}),
\end{nalign}
for some $\tilde{\gamma}_{1}, \tilde{\gamma}_{2} \in (0,1).$ 
Now, proceeding analogously to  \eqref{eq:leftmidvldif}, we write
\begin{nalign}\label{eq:midtimerval_dif}
     \rho_{j+\frac{1}{2}}^{\nph,+}-\rho_{j-\frac{1}{2}}^{\nph,+} 
    & = \left(\rho_{j+\frac{3}{2}}^{n,-}-\rho_{j+\frac{1}{2}}^{n,-}\right)\left(-\frac{\lambda}{2}\partial_{\rho}f(\bar{\rho}_{j+1}^{n,-}, A_{j+\frac{3}{2}}^{n,-})\right) \\
    & \spc + \left(\rho_{\jph}^{n,+}- \rho_{\jmh}^{n,+}\right)\left(1+\frac{\lambda}{2}\partial_{\rho}f(\bar{\rho}^{n,+}_{j}, A_{\jph}^{n,+})\right)\\ & \spc - \frac{\lambda}{2}\partial_{A}f(\rho_{\jph}^{n,-}, \bar{A}_{j+1}^{n,-})  \left(A_{j+\frac{3}{2}}^{n,-}-A_{\jph}^{n,-}\right) + \frac{\lambda}{2}\partial_{A}f(\rho_{\jmh}^{n,+}, \bar{A}_{j}^{n,+})  \left(A_{\jph}^{n,+}-A_{\jmh}^{n,+}\right),
\end{nalign}
and subsequently apply hypothesis \eqref{hyp:H2} and the estimate \eqref{eq:lrvaluediff_bd}, to yield
\begin{nalign}\label{eq:rhormidtimediffbd}
    \abs{\rho_{j+\frac{1}{2}}^{\nph,+}-\rho_{j-\frac{1}{2}}^{\nph,+} } 
    & \leq \frac{\lambda}{2}\norm{\partial_{\rho}f}\abs{\rho_{j+\frac{3}{2}}^{n,-}-\rho_{j+\frac{1}{2}}^{n,-}}  + \abs{\rho_{\jph}^{n,+}- \rho_{\jmh}^{n,+}}\left(1+\frac{\lambda}{2}\norm{\partial_{\rho}f}\right)\\ &\spc + \frac{\lambda}{2}M(\abs{\rho_{\jph}^{n,-}}+\abs{\rho_{\jmh}^{n,+}})(1+\theta)\Delta x \norm{\mu^{\prime}}\norm{\rho_{0}}_{\mathrm{L}^{1}(\mathbb{R})}.
\end{nalign}

In \eqref{eq:rhobarmidtimediff}, taking the absolute values  and summing over $\jinz,$ and subsequently invoking the estimate \eqref{eq:rhormidtimediffbd}, Theorems \ref{theorem:positivity} and \ref{theorem:L1stability}, hypothesis \eqref{hyp:H2}, 
Lemma \ref{lemma:reconvaluebd} and property \eqref{eq:tv_recomstruction}, it follows that
\begin{nalign}\label{eq:rhobardif}
\sum_{\jinz}\abs{\bar{\rho}_{j-1}^{\nph,+}- \bar{\rho}_{j}^{\nph,+}} 
    & \leq \sum_{\jinz}  \abs{\rho_{\jph}^{\nph,+}- \rho_{\jmh}^{\nph,+}} + \sum_{\jinz} \abs{\rho_{\jmh}^{\nph,+}- \rho_{\jmtbt}^{\nph,+}} \\
    & \leq \lambda\norm{\partial_{\rho}f}\sum_{\jinz}\abs{\rho_{j+\frac{1}{2}}^{n,-}-\rho_{j-\frac{1}{2}}^{n,-}}  + \left(2+{\lambda}\norm{\partial_{\rho}f}\right)\sum_{\jinz}\abs{\rho_{\jph}^{n,+}- \rho_{\jmh}^{n,+}}\\ &\spc + 2\lambda M (1+\theta)^{2}\norm{\mu^{\prime}}\norm{\rho_{0}}_{\mathrm{L}^{1}(\mathbb{R})}  \Delta x\sum_{\jinz}\rho_{j}^{n}\\
    & \leq 2\left(1+{\lambda}\norm{\partial_{\rho}f}\right)\sum_{\jinz}\abs{\rho^n_{j} - \rho_{j-1}^n}  + 2 \lambda M(1+\theta)^{2}\norm{\mu^{\prime}}\norm{\rho_{0}}_{\mathrm{L}^{1}(\mathbb{R})}^{2}   ,
\end{nalign}
Finally, in view of the estimates \eqref{eq:lrvaluediff_bd} and \eqref{eq:rhobardif}, we arrive at
\begin{nalign}\label{eq:L1c_bd}
    \sum_{\jinz}\abs{\mathcal{L}_j^{c}} & \leq \frac{\lambda^2}{4}(1+\theta)\Delta x \norm{\mu^{\prime}}\norm{\rho_{0}}_{\mathrm{L}^{1}(\mathbb{R})}  \norm{\partial_{A}f}  \norm{\partial_{\rho\rho}^{2}f}\sum_{\jinz}\abs{\bar{\rho}_{j-1}^{\nph,+}- \bar{\rho}_{j}^{\nph,+}}\\
    & \leq \frac{\lambda}{2}\Delta t (1+\theta) \norm{\mu^{\prime}}\norm{\rho_{0}}_{\mathrm{L}^{1}(\mathbb{R})}  \norm{\partial_{A}f}  \norm{\partial_{\rho\rho}^{2}f}\left(1+{\lambda}\norm{\partial_{\rho}f}\right)\sum_{\jinz}\abs{\rho^n_{j} - \rho_{j-1}^n} \\ & \spc + \frac{\lambda^2}{2}\Delta t M(1+\theta)^{3} \norm{\mu^{\prime}}^{2}\norm{\rho_{0}}_{\mathrm{L}^{1}(\mathbb{R})}^{3}  \norm{\partial_{A}f}  \norm{\partial_{\rho\rho}^{2}f}.  
\end{nalign}
Next, applying the estimates \eqref{eq:lrvaluediff_bd} and \eqref{eq:midtimeconvdiff}, we obtain a bound on the sum $
\displaystyle \sum_{\jinz}\abs{\mathcal{L}_j^{d}}$ as follows:
\begin{nalign}\label{eq:L1d_bd}
\sum_{\jinz}\abs{\mathcal{L}_j^{d}} & \leq \frac{1}{4}\Delta t^{2}(1+\theta)^{2} \norm{\mu^{\prime}}^{2}\norm{\rho_{0}}^{2}_{\mathrm{L}^{1}(\mathbb{R})} \norm{\partial_{A}f}  \norm{\partial_{\rho A}^{2}f} (1+ {\lambda}\norm{\partial_{\rho}f})\\ & \leq \frac{1}{4}\Delta t(1+\theta)^{2} \norm{\mu^{\prime}}^{2}\norm{\rho_{0}}^{2}_{\mathrm{L}^{1}(\mathbb{R})} \norm{\partial_{A}f}  \norm{\partial_{\rho A}^{2}f} (1+ {\lambda}\norm{\partial_{\rho}f}),
\end{nalign}
where the last inequality holds for $\Delta t \leq 1.$
Collecting the estimates \eqref{eq:L1a_bd}, \eqref{eq:L1b_bd}, \eqref{eq:L1c_bd} and \eqref{eq:L1d_bd} we arrive at
\begin{nalign}\label{eq:L1sumbd}
\sum_{\jinz}\abs{\mathcal{L}^{1}_{j}} &\leq \sum_{\jinz}\abs{\mathcal{L}_j^{a}}+ \sum_{\jinz}\abs{\mathcal{L}_j^{b}}+ \sum_{\jinz}\abs{\mathcal{L}_j^{c}}+ \sum_{\jinz}\abs{\mathcal{L}_j^{d}}\\
& \leq \mathcal{K}_{1}\Delta t + \mathcal{K}_{2} \Delta t \sum_{\jinz}\abs{\rho^n_{j} - \rho_{j-1}^n},
\end{nalign}
where \begin{nalign}\label{eq:K1K2}
    \mathcal{K}_{1}
    &= (1+\theta)^{2} \norm{\mu^{\prime\prime}}\norm{\rho_0}_{\mathrm{L}^1(\mathbb{R})}^{2}\frac{1}{4} \left(\alpha+ \lambda \norm{\partial_{\rho}}\right) M    \\& 
   \spc +\frac{1}{2}(1+\theta)^{3}\norm{\mu^{\prime}}^{2}\norm{\rho_{0}}_{\mathrm{L}^{1}(\mathbb{R})}^{3} \left( \alpha+\lambda\norm{\partial_{\rho}f}  + {\lambda^2}\norm{\partial_{A}f}  \norm{\partial_{\rho\rho}^{2}f}  \right)M\\
    & \spc + (1+\theta)^{2} \norm{\mu^{\prime}}^{2}\norm{\rho_{0}}^{2}_{\mathrm{L}^{1}(\mathbb{R})} \frac{1}{4}\norm{\partial_{A}f}  \norm{\partial_{\rho A}^{2}f} (1+ {\lambda}\norm{\partial_{\rho}f}),\\
    \mathcal{K}_{2}&:=  (1+\theta)\norm{\mu^{\prime}}\norm{\rho_{0}}_{\mathrm{L}^{1}(\mathbb{R})}  \left( \frac{1}{4}(\alpha+\lambda\norm{\partial_{\rho}f}) \norm{\partial_{\rho A}f}+ \frac{\lambda}{2}\norm{\partial_{A}f}  \norm{\partial_{\rho\rho}^{2}f}\left(1+{\lambda}\norm{\partial_{\rho}f}\right)  \right).
\end{nalign}
An analogous treatment of the term $\mathcal{L}^2_{j}$ yields
\begin{align}\label{eq:L2sumbd}
\sum_{\jinz}\abs{\mathcal{L}^2_{j}} &\leq \mathcal{K}_{1}\Delta t + \mathcal{K}_{2} \Delta t \sum_{\jinz}\abs{\rho^n_{j} - \rho_{j-1}^n}.  
\end{align}
Now, the estimates \eqref{eq:L1sumbd} and \eqref{eq:L2sumbd} on \eqref{eq:hatldif}, it follows that
\begin{nalign}\label{eq:hatldifbd}
    \sum_{\jinz}\abs{\hat{\ell}_{j+\frac{1}{2}}^n - \hat{\ell}_{j-\frac{1}{2}}^n} &\leq \sum_{\jinz}\abs{\mathcal{L}^1_{j}}+\sum_{\jinz}\abs{\mathcal{L}^2_{j}}\leq 2\mathcal{K}_{1}\Delta t + 2\mathcal{K}_{2} \Delta t \sum_{\jinz}\abs{\rho^n_{j} - \rho_{j-1}^n}.
\end{nalign}
In a similar way, we obtain a bound
\begin{nalign}\label{eq:hatkdif_bd}
    \sum_{\jinz}\abs{\hat{k}_{j+\frac{1}{2}}^n - \hat{k}_{j-\frac{1}{2}}^n} & \leq 2\mathcal{K}_{1}\Delta t + 2\mathcal{K}_{2} \Delta t \sum_{\jinz}\abs{\rho^n_{j} - \rho_{j-1}^n}.
\end{nalign}

Thus, in view of \eqref{eq:hatldifbd} and \eqref{eq:hatkdif_bd},  we conclude from \eqref{eq:tildehatCs} that 
\begin{nalign}\label{eq:hatCsumbd}
    \sum_\jinz \abs{\hat{C}_{\jph}^{n}} & \leq \sum_{\jinz}\abs{\hat{k}_{j+\frac{1}{2}}^n - \hat{k}_{j-\frac{1}{2}}^n} + \sum_{\jinz}\abs{\hat{\ell}_{j+\frac{1}{2}}^n - \hat{\ell}_{j-\frac{1}{2}}^n}\\
    & \leq 4\mathcal{K}_{1}\Delta t + 4\mathcal{K}_{2} \Delta t \sum_{\jinz}\abs{\rho^n_{j+1} - \rho_{j}^n}.
\end{nalign}

\subsection{Bound on $\lambda\sum_{\jinz} \abs{D_{\jph}^n}$ }\label{subsec:Djphbd}
Plugging in the definition of the numerical flux \eqref{eq:midtimenumflux} in the expression  \eqref{eq:D_defn}, rearranging the terms and subsequently applying the mean value theorem, we write
\begin{nalign}\label{eq:Das_Dabcde}
    D^n_{\jph} 
& = \frac{1}{2}\left(f(\rho^{\nph,-}_{\jph}, A_{j+\frac{3}{2}}^{\nph}) - f(\rho^{\nph,-}_{\jph}, A_{\jph}^{\nph})\right) + \frac{1}{2} \left(f(\rho^{\nph,+}_{\jph}, A_{j+\frac{3}{2}}^{\nph}) - f(\rho^{\nph,+}_{\jph}, A_{\jph}^{\nph})\right)\\
& \spc -\frac{1}{2}\left(f(\rho^{\nph,-}_{\jmh}, A_{j+\frac{1}{2}}^{\nph}) - f(\rho^{\nph,-}_{\jmh}, A_{\jmh}^{\nph})\right) - \frac{1}{2} \left(f(\rho^{\nph,+}_{\jmh}, A_{j+\frac{1}{2}}^{\nph}) - f(\rho^{\nph,+}_{\jmh}, A_{\jmh}^{\nph})\right)\\
& = \frac{1}{2} (A_{j+\frac{3}{2}}^{\nph}-A_{j+\frac{1}{2}}^{\nph})\left[\partial_{A}f(\rho_{\jph}^{\nph,-}, \bar{A}_{\jpo}^{\nph}) + \partial_{A}f(\rho_{\jph}^{\nph,+}, \tilde{A}_{\jpo}^{\nph}) \right] \\& \spc -\frac{1}{2} (A_{j+\frac{1}{2}}^{\nph}-A_{j-\frac{1}{2}}^{\nph})\left[ \partial_{A}f(\rho_{\jmh}^{\nph,-}, \bar{A}_{j}^{\nph})+\partial_{A}f(\rho_{\jmh}^{\nph,+}, \tilde{A}_{j}^{\nph})\right]
\end{nalign}
for $\displaystyle\bar{A}_{j}^{\nph}, \tilde{A}_{j}^{\nph} \in \mathcal{I}({A}_{\jmh}^{\nph}, {A}_{\jph}^{\nph}), \jinz.$
\par
Next, we add and subtract the term $\frac{1}{2} (A_{j+\frac{1}{2}}^{\nph}- A_{j-\frac{1}{2}}^{\nph})\left[\partial_{A}f(\rho^{\nph,-}_{\jph}, \bar{A}_{\jpo}^{\nph})+ \partial_{A}f(\rho^{\nph,+}_{\jph}, \tilde{A}_{\jpo}^{\nph})\right]$ to \eqref{eq:Das_Dabcde} and write 
\begin{nalign}\label{eq:Djph_refor}
    \mathcal{D}_{\jph}&= \frac{1}{2} \left((A_{j+\frac{3}{2}}^{\nph}-A_{j+\frac{1}{2}}^{\nph})- (A_{j+\frac{1}{2}}^{\nph}-A_{j-\frac{1}{2}}^{\nph})\right)\left[\partial_{A}f(\rho^{\nph,-}_{\jph}, \bar{A}_{\jpo}^{\nph})+ \partial_{A}f(\rho^{\nph,+}_{\jph}, \tilde{A}_{\jpo}^{\nph})\right] \\
& \spc + \frac{1}{2}(A^{\nph}_{\jph}-A^{\nph}_{\jmh}) \left[\partial_{A}f(\rho_{\jph}^{\nph,-}, \bar{A}_{\jpo}^{\nph})-\mathcal{S}_1+ \mathcal{S}_1- \partial_{A}f(\rho_{\jmh}^{\nph,-}, \bar{A}_{j}^{\nph})\right]\\
& \spc + \frac{1}{2}(A^{\nph}_{\jph}-A^{\nph}_{\jmh}) \left[\partial_{A}f(\rho_{\jph}^{\nph,+}, \tilde{A}_{\jpo}^{\nph})-\mathcal{S}_2+\mathcal{S}_2- \partial_{A}f(\rho_{\jmh}^{\nph,+}, \tilde{A}_{j}^{\nph})\right],
\end{nalign}
where $\mathcal{S}_{1}:= \partial_{A}f(\rho_{\jmh}^{\nph,-}, \bar{A}_{\jpo}^{\nph})$ and $\mathcal{S}_2:=\partial_{A}f(\rho_{\jmh}^{\nph,+}, \tilde{A}_{\jpo}^{\nph}).$  Once again applying the mean value theorem  in \eqref{eq:Djph_refor} yields
\begin{align}
    \mathcal{D}_{\jph} = \mathcal{D}_{\jph}^a + \mathcal{D}_{\jph}^b + \mathcal{D}_{\jph}^c+ \mathcal{D}_{\jph}^d + \mathcal{D}_{\jph}^e,
\end{align}
where
\begin{nalign}\label{eq:D_abcde}
    \mathcal{D}_{\jph}^a &:=  \frac{1}{2} (A_{j+\frac{3}{2}}^{\nph}-2A_{j+\frac{1}{2}}^{\nph}+ A_{j-\frac{1}{2}}^{\nph})\left[\partial_{A}f(\rho^{\nph,-}_{\jph}, \bar{A}_{\jpo}^{\nph})+ \partial_{A}f(\rho^{\nph,+}_{\jph}, \tilde{A}_{\jpo}^{\nph})\right], \\
    \mathcal{D}_{\jph}^b &:= \frac{1}{2}(A_{\jph}^\nph- A_{\jmh}^\nph) \left[\partial_{\rho A}^{2}f(\bar{\rho}_{j}^{\nph,-}, \bar{A}_{\jpo}^{\nph})(\rho^{\nph,-}_{\jph} - \rho^{\nph,-}_{\jmh}) \right],\\
    \mathcal{D}_{\jph}^c &:=  \frac{1}{2}(A_{\jph}^\nph- A_{\jmh}^\nph)\left[ \partial^{2}_{AA}f(\rho_{\jmh}^{\nph,-}, \bar{\bar{A}}_{\jph}^{\nph})(\bar{A}^{\nph}_{\jpo} - \bar{A}^{\nph}_{j})\right],\\
    \mathcal{D}_{\jph}^d & := \frac{1}{2}(A_{\jph}^\nph- A_{\jmh}^\nph)\left[\partial_{\rho A}^{2}f(\bar{\rho}_{j}^{\nph,+}, \tilde{A}_{\jpo}^{\nph})(\rho^{\nph,+}_{\jph} - \rho^{\nph,+}_{\jmh}) \right],\\
   \mathcal{D}_{\jph}^e &:= \frac{1}{2}(A_{\jph}^\nph- A_{\jmh}^\nph)\left[ \partial_{AA}^{2}f(\rho_{\jmh}^{\nph,+}, \tilde{\tilde{A}}_{\jph}^{\nph})(\tilde{A}^{\nph}_{\jpo} - \tilde{A}^{\nph}_{j}) \right],
\end{nalign}
for some $\bar{\bar{A}}_{\jph}^{\nph} \in \mathcal{I}({\bar{A}}_{j}^{\nph}, {\bar{A}}_{\jpo}^{\nph})$ and $\bar{\rho}^{\nph, \pm}_{j} \in \mathcal{I}({\rho}^{\nph, \pm}_{\jmh}, {\rho}^{\nph, \pm}_{\jph}), \jinz.$

\par
Next our goal is to estimate the sum over $\jinz$ of absolute values of all the terms in \eqref{eq:Das_Dabcde}. To estimate the term $\displaystyle\sum_{\jinz}\abs{\mathcal{D}_{\jph}^a}$, we first expand $A_{j+\frac{3}{2}}^{\nph}-2A_{j+\frac{1}{2}}^{\nph}+ A_{j-\frac{1}{2}}^{\nph},$ sum its absolute value over $\jinz$ and  successively apply the mean value theorem twice to write
\begin{nalign}\label{eq:sum_conv_3_mt}    \sum_{\jinz}\abs{A_{j+\frac{3}{2}}^{\nph}-2A_{j+\frac{1}{2}}^{\nph}+ A_{j-\frac{1}{2}}^{\nph}} &\leq \frac{\Delta x}{2} \sum_{\jinz} \abs{\mu_{j+2-l}-2\mu_{j+1-l}+ \mu_{j-l}}\abs{\rho_{l-\half}^{\nph,+}}\\  & \spc  + \frac{\Delta x}{2} \sum_{\jinz} \abs{\mu_{j+1-l}-2\mu_{j-l}+ \mu_{j-1-l}}\abs{\rho_{l+\half}^{\nph,-}} \\
    & \leq  {\Delta x^{2}}\norm{\mu^{\prime\prime}}(1+\theta)(1+ {\lambda}\norm{\partial_{\rho}f}) \norm{\rho_{0}}_{\mathrm{L}^{1}(\mathbb{R})},
\end{nalign}
where the last inequality follows from the estimate \eqref{eq:midvalue_bd}.
Further, invoking \eqref{eq:sum_conv_3_mt}, hypothesis \eqref{hyp:H2} and \eqref{eq:midvalue_bd}, we obtain \begin{nalign}\label{eq:Dasumbd}
    \sum_{\jinz}\abs{\mathcal{D}_{\jph}^a} & \leq \frac{1}{2}{\Delta x^{2}}\norm{\mu^{\prime\prime}}(1+\theta)(1+ {\lambda}\norm{\partial_{\rho}f}) \norm{\rho_{0}}_{\mathrm{L}^{1}(\mathbb{R})} M\sum_{\jinz}( \abs{\rho^{\nph,-}_{\jph}}+ \abs{\rho^{\nph,+}_{\jph}})\\
    & \leq {\Delta x^{2}}\norm{\mu^{\prime\prime}}(1+\theta)(1+ {\lambda}\norm{\partial_{\rho}f}) \norm{\rho_{0}}_{\mathrm{L}^{1}(\mathbb{R})} M(1+\theta)\left(1+{\lambda}\norm{\partial_{\rho}f}\right)\sum_{\jinz}\rho_{j}^n  \\
    & \leq {\Delta x^{2}}\norm{\mu^{\prime\prime}}(1+\theta)^{2}(1+ {\lambda}\norm{\partial_{\rho}f})^{2} \norm{\rho_{0}}_{\mathrm{L}^{1}(\mathbb{R})}^{2} M.
\end{nalign}
Next, to derive a bound on the term $\displaystyle \sum_{\jinz}\abs{\mathcal{D}_{\jph}^c}$, we use the estimate \eqref{eq:midtimeconvdiff}  to write
\begin{nalign}\label{eq:tildebardiffs}
    \abs{\tilde{A}_{\jpo}^{\nph}-\tilde{A}_{j}^{\nph}}, \abs{\bar{A}_{\jpo}^{\nph}-\bar{A}_{j}^{\nph}} &\leq \abs{\gamma_{1} A_{j+\frac{3}{2}}^{\nph} + (1-\gamma_{1})A_{j+\frac{1}{2}}^{\nph} - \gamma_{2} A_{j+\frac{1}{2}}^{\nph} - (1-\gamma_{2})A_{j-\frac{1}{2}}^{\nph} }\\
    & \leq \gamma_{1}\abs{A_{j+\frac{3}{2}}^{\nph} - A_{j+\frac{1}{2}}^{\nph}} + (1-\gamma_{2})\abs{A^{\nph}_{\jph} - A^{\nph}_{\jmh}}\\
    & \leq 2{\Delta x}\norm{\mu^{\prime}}(1+\theta)  (1+ {\lambda}\norm{\partial_{\rho}f})\norm{\rho_{0}}_{\mathrm{L}^{1}(\mathbb{R})}, 
\end{nalign}
for some $\gamma_{1}, \gamma_{2} \in (0,1).$ Now, invoking the estimates \eqref{eq:midtimeconvdiff} and \eqref{eq:tildebardiffs}, hypothesis \eqref{hyp:H2} and Lemma \ref{lemma:midtimebd}, we obtain
\begin{nalign}\label{eq:Dcsumbd}
\sum_{\jinz}\abs{\mathcal{D}_{\jph}^c} &\leq{\Delta x}^2 M \norm{\mu^{\prime}}^{2}(1+\theta)^{2}  (1+ {\lambda}\norm{\partial_{\rho}f})^{2}\norm{\rho_{0}}^{2}_{\mathrm{L}^{1}(\mathbb{R})}  \sum_{\jinz} \abs{\rho_{\jmh}^{\nph,-}} \\
&\leq{\Delta x}M\norm{\mu^{\prime}}^{2}(1+\theta)^{3}  (1+ {\lambda}\norm{\partial_{\rho}f})^{3}\norm{\rho_{0}}^{2}_{\mathrm{L}^{1}(\mathbb{R})}  {\Delta x}\sum_{\jinz} \rho_{j-1}^n\\
& \leq {\Delta x}M\norm{\mu^{\prime}}^{2}(1+\theta)^{3}  (1+ {\lambda}\norm{\partial_{\rho}f})^{3}\norm{\rho_{0}}^{3}_{\mathrm{L}^{1}(\mathbb{R})} ,
\end{nalign} where the last inequality holds for  $\Delta x \leq 1.$
Analogously, we can estimate the term $\displaystyle\sum_{\jinz}\abs{\mathcal{D}_{\jph}^e}$ as follows
\begin{align}\label{eq:Desumbd}
\sum_{\jinz}\abs{\mathcal{D}_{\jph}^e} &\leq  {\Delta x}M\norm{\mu^{\prime}}^{2}(1+\theta)^{3}  (1+ {\lambda}\norm{\partial_{\rho}f})^{3}\norm{\rho_{0}}^{3}_{\mathrm{L}^{1}(\mathbb{R})} .
\end{align}
\par
Next, we proceed to estimate the terms $\displaystyle \sum_{\jinz}\abs{\mathcal{D}_{\jph}^b} \, \mbox{and} \, \sum_{\jinz}\abs{\mathcal{D}_{\jph}^d}.$ To this end, we sum  \eqref{eq:rhormidtimediffbd} over $\jinz$
and subsequently apply property \eqref{eq:tv_recomstruction} and Lemma \ref{lemma:reconvaluebd} to yield 
\begin{nalign}\label{eq:sum_midtimerdif}
    \sum_\jinz\abs{\rho_{j+\frac{1}{2}}^{\nph,+}-\rho_{j-\frac{1}{2}}^{\nph,+}} & \leq (1+\lambda \norm{\partial_\rho f}) \sum_{\jinz}\abs{\rho_{\jpo}^n - \rho_{j}^n} + {\lambda}(1+\theta)^{2} \norm{\mu^{\prime}}\norm{\rho_{0}}_{\mathrm{L}^{1}(\mathbb{R})}M\Delta x\sum_{\jinz}\rho_{j}^{n} \\
    & \leq (1+\lambda \norm{\partial_\rho f}) \sum_{\jinz}\abs{\rho_{\jpo}^n - \rho_{j}^n} + {\lambda}(1+\theta)^{2} \norm{\mu^{\prime}}\norm{\rho_{0}}^{2}_{\mathrm{L}^{1}(\mathbb{R})}M.
\end{nalign}
Similarly, we obtain
\begin{align}\label{eq:sum_midtimeldif}
    \sum_\jinz\abs{\rho_{j+\frac{1}{2}}^{\nph,-}-\rho_{j-\frac{1}{2}}^{\nph,-}} &\leq (1+\lambda \norm{\partial_\rho f}) \sum_{\jinz}\abs{\rho_{\jpo}^n - \rho_{j}^n} + {\lambda}(1+\theta)^{2} \norm{\mu^{\prime}}\norm{\rho_{0}}^{2}_{\mathrm{L}^{1}(\mathbb{R})}M. 
\end{align}
Now, invoking the estimates \eqref{eq:sum_midtimerdif}, \eqref{eq:sum_midtimeldif} and  \eqref{eq:midtimeconvdiff}, we obtain 
\begin{nalign}\label{eq:DbDdsumbd}
    \sum_{\jinz}\abs{\mathcal{D}_{\jph}^b}, \sum_{\jinz}\abs{\mathcal{D}_{\jph}^d} &\leq \frac{1}{2}{\Delta x}\norm{\mu^{\prime}}(1+\theta)  (1+ {\lambda}\norm{\partial_{\rho}f})^{2}\norm{\rho_{0}}_{\mathrm{L}^{1}(\mathbb{R})}\norm{\partial_{\rho A}^{2}f} \sum_{\jinz}\abs{\rho_{\jpo}^n - \rho_{j}^n} \\
    &  \spc + {\Delta t}\frac{1}{2}\norm{\mu^{\prime}}^{2}(1+\theta)^{3}  (1+ {\lambda}\norm{\partial_{\rho}f})\norm{\rho_{0}}^{3}_{\mathrm{L}^{1}(\mathbb{R})}\norm{\partial_{\rho A}^{2}f}M.
    \end{nalign}
Finally, the estimates \eqref{eq:Dasumbd}, \eqref{eq:DbDdsumbd}, \eqref{eq:Dcsumbd} and \eqref{eq:Desumbd} together yield the desired estimate:
\begin{nalign}\label{eq:Dsumbd}
    \lambda\sum_{\jinz} \abs{D_{\jph}^n} & \leq \Delta t \mathcal{K}_{7} + \Delta t \mathcal{K}_{8}\sum_{\jinz}\abs{\rho_{j+1}^n-\rho_{j}^n}, 
\end{nalign}
where 
\begin{nalign}\label{eq:K7K8}
    \mathcal{K}_{7} &:= \norm{\mu^{\prime\prime}}(1+\theta)^{2}(1+ {\lambda}\norm{\partial_{\rho}f})^{2} \norm{\rho_{0}}_{\mathrm{L}^{1}(\mathbb{R})}^{2} M \\ & \spc + \norm{\mu^{\prime}}^{2}(1+\theta)^{3}  (1+ {\lambda}\norm{\partial_{\rho}f})\norm{\rho_{0}}^{3}_{\mathrm{L}^{1}(\mathbb{R})}\norm{\partial_{\rho A}^{2}f}M\\ & \spc +  2{\Delta x}\norm{\mu^{\prime}}^{2}(1+\theta)^{3}  (1+ {\lambda}\norm{\partial_{\rho}f})^{3}\norm{\rho_{0}}^{3}_{\mathrm{L}^{1}(\mathbb{R})} M,\\
    \mathcal{K}_{8} &:=\norm{\mu^{\prime}}(1+\theta)  (1+ {\lambda}\norm{\partial_{\rho}f})^{2}\norm{\rho_{0}}_{\mathrm{L}^{1}(\mathbb{R})}\norm{\partial_{\rho A}^{2}f}.
\end{nalign}


\section{A second-order MUSCL scheme with Runge-Kutta time stepping}\label{section:rkscheme}
We combine the MUSCL-type reconstruction \eqref{eq:reconstruction_pl} and a
Runge-Kutta time-stepping (see \cite{gottlieb1998otalVD, shu1988}) to obtain a two-stage second-order method to approximate \eqref{eq:problem}. Given the cell-average solutions $\{\rho_{j}^n\}_{\jinz}$ at $t^n,$ we apply two consecutive Euler-forward stages to obtain a fully discrete second-order scheme  as follows:\\
{\bf Step 1.} Define
\begin{nalign}
    &\rho^{(1)}_{j} = \rho^{n}_{j} - \lambda[F(\rho_{j+\frac{1}{2}}^{n,-}, \rho_{j+\frac{1}{2}}^{n, +}, A^{n}_{j+\frac{1}{2}}) - F(\rho_{j-\frac{1}{2}}^{n,-}, \rho_{j-\frac{1}{2}}^{n,+}, A^{n}_{j-\frac{1}{2}})] \;\;\;\mbox{for each }\; j \in \mathbb{Z}, \no
\end{nalign} 
where $F$ is given by (\ref{eq:lxf}),  $\rho_{j\mp\frac{1}{2}}^{n,\pm}, \jinz,$ are obtained from \eqref{eq:reconvalues} and the discrete convolutions are computed using the trapezoidal quadrature rule as follows: \begin{align*}
    A^{n}_{j+\frac{1}{2}} &:= \frac{\Delta x}{2}  \sum_{l \in \mathbb{Z}}\left[\mu_{j+1-l}        \ \rho^{n,+}_{l-\frac{1}{2}}+ \mu_{j-l} \ \rho^{n,-}_{l+\frac{1}{2}}\right], \quad \jinz.
\end{align*}\\
{\bf Step 2.} Next, we apply the slope limiter \eqref{slope-1} to $\{\rho^{(1)}_{j}\}_{\jinz}$  to obtain the face values
$
\rho_{j+\frac{1}{2}}^{(1),\pm}, \jinz.$
Now, define 
\begin{nalign}\label{step-3}
    &\rho^{(2)}_{j} = \rho^{(1)}_{j} - \lambda[F(\rho_{j+\frac{1}{2}}^{(1),-}, \rho_{j+\frac{1}{2}}^{(1), +}, A^{(1)}_{j+\frac{1}{2}}) - F(\rho_{j-\frac{1}{2}}^{(1),-}, \rho_{j-\frac{1}{2}}^{(1),+}, A^{(1)}_{j-\frac{1}{2} })]\no 
 \end{nalign} 
where
$$A^{(1)}_{j+\frac{1}{2}} := \frac{\Delta x}{2}  \sum_{l \in \mathbb{Z}}\left[\mu_{j+1-l}        \ \rho^{(1),+}_{l-\frac{1}{2}}+ \mu_{j-l} \ \rho^{(1),-}_{l+\frac{1}{2}}\right], \quad \jinz.$$
{\bf Step 3.} Finally, the updated solution at the time level $t^{n+1}$ is computed as
\begin{align} \label{RK-2d}
\rho_{j}^{n+1}=\frac{\rho^{n}_{j}+\rho^{(2)}_{j}}{2}, \quad \jinz.
\end{align} 

\begin{remark}
The convergence of the scheme \eqref{RK-2d} to the entropy solution of problem \eqref{eq:problem} can be established using calculations analogous to those for the scheme \eqref{eq:mh}, within the general framework developed in \cite{gowda2023}.
\end{remark}


\bibliographystyle{siam}
\bibliography{ref}

\def\ocirc#1{\ifmmode\setbox0=\hbox{$#1$}\dimen0=\ht0 \advance\dimen0
  by1pt\rlap{\hbox to\wd0{\hss\raise\dimen0
  \hbox{\hskip.2em$\scriptscriptstyle\circ$}\hss}}#1\else {\accent"17 #1}\fi}
\begin{thebibliography}{10}

\bibitem{aggarwal2015}
{\sc A.~Aggarwal, R.~M. Colombo, and P.~Goatin}, {\em Nonlocal systems of
  conservation laws in several space dimensions}, SIAM J. Numer. Anal., 53
  (2015), pp.~963--983.

\bibitem{aggarwal_holden_vaidya2024}
{\sc A.~Aggarwal, H.~Holden, and G.~Vaidya}, {\em On the accuracy of the finite
  volume approximations to nonlocal conservation laws}, Numer. Math., 156
  (2024), pp.~237--271.

\bibitem{amorim2015}
{\sc P.~Amorim, R.~M. Colombo, and A.~Teixeira}, {\em On the numerical
  integration of scalar nonlocal conservation laws}, ESAIM Math. Model. Numer.
  Anal., 49 (2015), pp.~19--37.

\bibitem{bayen2021}
{\sc A.~Bayen, J.-M. Coron, N.~De~Nitti, A.~Keimer, and L.~Pflug}, {\em
  Boundary controllability and asymptotic stabilization of a nonlocal traffic
  flow model}, Vietnam J. Math., 49 (2021), pp.~957--985.

\bibitem{berthon2006}
{\sc C.~Berthon}, {\em Why the {MUSCL}-{H}ancock scheme is {$L^1$}-stable},
  Numer. Math., 104 (2006), pp.~27--46.

\bibitem{betancourt2011}
{\sc F.~Betancourt, R.~B\"{u}rger, K.~H. Karlsen, and E.~M. Tory}, {\em On
  nonlocal conservation laws modelling sedimentation}, Nonlinearity, 24 (2011),
  pp.~855--885.

\bibitem{blandin2016}
{\sc S.~Blandin and P.~Goatin}, {\em Well-posedness of a conservation law with
  non-local flux arising in traffic flow modeling}, Numer. Math., 132 (2016),
  pp.~217--241.

\bibitem{burgergoatin2020}
{\sc R.~B\"{u}rger, P.~Goatin, D.~Inzunza, and L.~M. Villada}, {\em A non-local
  pedestrian flow model accounting for anisotropic interactions and domain
  boundaries}, Math. Biosci. Eng., 17 (2020), pp.~5883--5906.

\bibitem{chalons2018}
{\sc C.~Chalons, P.~Goatin, and L.~M. Villada}, {\em High-order numerical
  schemes for one-dimensional nonlocal conservation laws}, SIAM J. Sci.
  Comput., 40 (2018), pp.~A288--A305.

\bibitem{chandrasekhar2020}
{\sc P.~Chandrashekar, B.~Nkonga, A.~K. Meena, and A.~Bhole}, {\em A path
  conservative finite volume method for a shear shallow water model}, J.
  Comput. Phys., 413 (2020), pp.~109457, 29.

\bibitem{chiarello-globalentropy}
{\sc F.~A. Chiarello and P.~Goatin}, {\em Global entropy weak solutions for
  general non-local traffic flow models with anisotropic kernel}, ESAIM Math.
  Model. Numer. Anal., 52 (2018), pp.~163--180.

\bibitem{chiarello2018}
\leavevmode\vrule height 2pt depth -1.6pt width 23pt, {\em {Non-local
  multi-class traffic flow models}}.
\newblock Networks and Heterogeneous Media, to appear, Aug. 2018.

\bibitem{chiarellotosin2023}
{\sc F.~A. Chiarello and A.~Tosin}, {\em Macroscopic limits of non-local
  kinetic descriptions of vehicular traffic}, Kinet. Relat. Models, 16 (2023),
  pp.~540--564.

\bibitem{ciotirfayad2021}
{\sc I.~Ciotir, R.~Fayad, N.~Forcadel, and A.~Tonnoir}, {\em A non-local
  macroscopic model for traffic flow}, ESAIM Math. Model. Numer. Anal., 55
  (2021), pp.~689--711.

\bibitem{colomboGaravelloMercier2012}
{\sc R.~M. Colombo, M.~Garavello, and M.~L\'{e}cureux-Mercier}, {\em A class of
  nonlocal models for pedestrian traffic}, Math. Models Methods Appl. Sci., 22
  (2012), pp.~Paper No. 1150023, 34.

\bibitem{ColomboHertyMercier2011}
{\sc R.~M. Colombo, M.~Herty, and M.~Mercier}, {\em Control of the continuity
  equation with a non local flow}, ESAIM Control Optim. Calc. Var., 17 (2011),
  pp.~353--379.

\bibitem{ColomboMercier2012}
{\sc R.~M. Colombo and M.~L\'{e}cureux-Mercier}, {\em Nonlocal crowd dynamics
  models for several populations}, Acta Math. Sci. Ser. B (Engl. Ed.), 32
  (2012), pp.~177--196.

\bibitem{friedrich2019CWENO}
{\sc J.~Friedrich and O.~Kolb}, {\em Maximum principle satisfying {CWENO}
  schemes for nonlocal conservation laws}, SIAM J. Sci. Comput., 41 (2019),
  pp.~A973--A988.

\bibitem{friedrich2018}
{\sc J.~Friedrich, O.~Kolb, and S.~G\"{o}ttlich}, {\em A {G}odunov type scheme
  for a class of {LWR} traffic flow models with non-local flux}, Netw. Heterog.
  Media, 13 (2018), pp.~531--547.

\bibitem{friedrich2023}
{\sc J.~Friedrich, S.~Sudha, and S.~Rathan}, {\em Numerical schemes for a class
  of nonlocal conservation laws: a general approach}, Netw. Heterog. Media, 18
  (2023), pp.~1335--1354.

\bibitem{godlewski1991hyperbolic}
{\sc E.~Godlewski and P.-A. Raviart}, {\em Hyperbolic systems of conservation
  laws}, vol.~3/4 of Math\'{e}matiques \& Applications (Paris) [Mathematics and
  Applications], Ellipses, Paris, 1991.

\bibitem{gottlieb1998otalVD}
{\sc S.~Gottlieb and C.-W. Shu}, {\em Total variation diminishing runge-kutta
  schemes}, Math. Comput., 67 (1998), pp.~73--85.

\bibitem{guinot2012}
{\sc V.~Guinot and C.~Delenne}, {\em M{USCL} schemes for the shallow water
  sensitivity equations with passive scalar transport}, Comput. \& Fluids, 59
  (2012), pp.~11--30.

\bibitem{keimer2017}
{\sc A.~Keimer and L.~Pflug}, {\em Existence, uniqueness and regularity results
  on nonlocal balance laws}, J. Differential Equations, 263 (2017),
  pp.~4023--4069.

\bibitem{keimer2019}
\leavevmode\vrule height 2pt depth -1.6pt width 23pt, {\em On approximation of
  local conservation laws by nonlocal conservation laws}, J. Math. Anal. Appl.,
  475 (2019), pp.~1927--1955.

\bibitem{Keimer2018}
{\sc A.~Keimer, L.~Pflug, and M.~Spinola}, {\em Existence, uniqueness and
  regularity of multi-dimensional nonlocal balance laws with damping}, J. Math.
  Anal. Appl., 466 (2018), pp.~18--55.

\bibitem{leroux1981}
{\sc A.~Y. Le~Roux}, {\em Convergence of an accurate scheme for first order
  quasilinear equations}, RAIRO Anal. Num\'{e}r., 15 (1981), pp.~151--170.

\bibitem{manoj2024}
{\sc N.~Manoj, G.~V. Gowda, and S.~K. Kenettinkara}, {\em A positivity
  preserving second-order scheme for multi-dimensional system of non-local
  conservation laws}, arXiv preprint arXiv:2412.18475v2,  (2024).

\bibitem{shu1988}
{\sc C.-W. Shu and S.~Osher}, {\em Efficient implementation of essentially
  nonoscillatory shock-capturing schemes}, J. Comput. Phys., 77 (1988),
  pp.~439--471.

\bibitem{SopasakisKatsoulakis2006}
{\sc A.~Sopasakis and M.~A. Katsoulakis}, {\em Stochastic modeling and
  simulation of traffic flow: asymmetric single exclusion process with
  {A}rrhenius look-ahead dynamics}, SIAM J. Appl. Math., 66 (2006),
  pp.~921--944.

\bibitem{tong2023}
{\sc W.~Tong, R.~Yan, and G.~Chen}, {\em On a class of robust bound-preserving
  {MUSCL}-{H}ancock schemes}, J. Comput. Phys., 474 (2023), pp.~Paper No.
  111805, 21.

\bibitem{tong2024}
\leavevmode\vrule height 2pt depth -1.6pt width 23pt, {\em A class of
  bound-preserving {MUSCL}-{H}ancock schemes in two dimensions}, J. Comput.
  Phys., 498 (2024), pp.~Paper No. 112668, 28.

\bibitem{van1979towards}
{\sc B.~van Leer}, {\em Towards the ultimate conservative difference scheme. v.
  a second-order sequel to godunov's method}, J. Comput. Phys., 32 (1979),
  pp.~101--136.

\bibitem{vanleer1984}
\leavevmode\vrule height 2pt depth -1.6pt width 23pt, {\em On the relation
  between the upwind-differencing schemes of {G}odunov, {E}ngquist-{O}sher and
  {R}oe}, SIAM J. Sci. Statist. Comput., 5 (1984), pp.~1--20.

\bibitem{gowda2023}
{\sc G.~D. Veerappa~Gowda, K.~Sudarshan~Kumar, and N.~Manoj}, {\em Convergence
  of a second-order scheme for non-local conservation laws}, ESAIM Math. Model.
  Numer. Anal., 57 (2023), pp.~3439--3481.

\bibitem{viallon1991}
{\sc M.-C. Viallon}, {\em Convergence of the two-point upstream weighting
  scheme}, Math. Comp., 57 (1991), pp.~569--584.

\bibitem{vila1988}
{\sc J.-P. Vila}, {\em High-order schemes and entropy condition for nonlinear
  hyperbolic systems of conservation laws}, Math. Comp., 50 (1988), pp.~53--73.

\bibitem{vila1989}
\leavevmode\vrule height 2pt depth -1.6pt width 23pt, {\em An analysis of a
  class of second-order accurate {G}odunov-type schemes}, SIAM J. Numer. Anal.,
  26 (1989), pp.~830--853.

\bibitem{waagan2009}
{\sc K.~Waagan}, {\em A positive {MUSCL}-{H}ancock scheme for ideal
  magnetohydrodynamics}, J. Comput. Phys., 228 (2009), pp.~8609--8626.

\end{thebibliography}
\end{document}